\newcounter{corr}
\newcommand{\corr}[3]{\typeout{Warning : a correction remains in page
\thepage}
				\stepcounter{corr}        
				{\color{blue}\ifmmode\text{\,\sout{\ensuremath{#1}}\,}\else\sout{#1}\fi}
       {\color{red}#2}
       {\color{violet} #3}}
\numberwithin{equation}{section}
\def\ran{\mathop{\rm Ran}\nolimits}
 \def\dom{\mathop{\rm dom}\nolimits}  
 \newtheorem{thm}{Theorem}[section]
 \newtheorem{lem}[thm]{Lemma}
 \newtheorem{exam}[thm]{Example}
 \newtheorem{prop}[thm]{Proposition}
 \newtheorem{cor}[thm]{Corollary}
 \newtheorem{rem}[thm]{Remark}
 \newtheorem{defn}[thm]{Definition}
 \newtheorem{assu}[thm]{Assumption}
\newcommand{\Tau}{{Z}}
 \def\Id{\mathop{\rm Id}\nolimits}
 \def\re{\mathop{\rm Re}\nolimits}
 \def\dom{\mathop{\rm dom}\nolimits}  
 \def\r{R}
\newcommand{\C}{{\mathbb C}}   
 \newcommand{\R}{{\mathbb R}}           
\newcommand{\K}{{\mathbb K}}        
\renewcommand{\d}{{\mathcal D}}
\author{W. Arendt}
\address{Wolfgang Arendt, Institute of Applied Analysis, University of Ulm. Helmholtzstr. 18, D-89069 Ulm (Germany)} 
\email{wolfgang.arendt@uni-ulm.de}
\author{I. Chalendar}
\address{Isabelle Chalendar,  Universit\'e Gustave Eiffel, LAMA, (UMR 8050), UPEM, UPEC, CNRS, F-77454, Marne-la-Vallée (France)}
\email{isabelle.chalendar@univ-eiffel.fr}
\author{R. Eymard}
\address{Robert Eymard,  Universit\'e Gustave Eiffel, LAMA, (UMR 8050), UPEM, UPEC, CNRS, F-77454, Marne-la-Vallée (France)}
\email{robert.eymard@univ-eiffel.fr}
\title[Initial value problems ]{Lions' representation theorem and applications}
\keywords{Lions' representation theorem, non-autonomous evolution equations, boundary conditions, dissipative operators}
\subjclass[2010]{47A07,47A50,47A52}
\begin{document}	

\begin{abstract}   
 The Representation Theorem of Lions (RTL) is a version of the Lax--Milgram Theorem where completeness of 
 one of the spaces is not complete. In this paper, RTL is deduced from an operator-theoretical version on normed space. The main point of the paper is a theory of derivations, based on RTL, for which well-posedness is proved.  
  One application concerns non-autonomous evolution equations with a new initial-value and a periodic boundary condition for the time variable.
  \end{abstract}	
\maketitle
   
\section{Introduction}
A most elegant way to prove well-posedness of a non-autonomous evolutionary problem has been given by J. L. Lions \cite[Chap. III, Th\'eor\`eme 1.1, p. 37]{Lio61} using a surprising version of the Lax--Milgram Theorem, where one of the spaces is not complete (see e.g. Theorem~\ref{th:1.1real} below). We call it the Representation Theorem of Lions, RTL. The purpose of this  article is to study a derivation problem with the help of Lions' result. A Hilbert space $V$ is given, which we choose real for this introduction, as well as a derivation ${\mathcal D}:R\to V^*$, where $R$ is a dense subspace of $V$;  i.e.  $\mathcal D$ satisfies 
\[    \langle  {\mathcal D}r_1,r_2 \rangle_{V^*, V}+  \langle  {\mathcal D}r_2,r_1 \rangle_{V^*, V}=0  \mbox{ for all }r_1,r_2\in R.\]     
It turns out that this simple setting leads to an interesting structure and to interesting results on boundary conditions. In fact, $\mathcal D$ can be extended naturally to a Hilbert space $W$ which embeds continuously into $V$.   Given a coercive operator ${\mathcal A}\in {\mathcal L}(V,V^*)$,
 we study the problem 
 \begin{equation}\label{eq:fond}
 {\mathcal D}u + {\mathcal A}u=f
 \end{equation}
where $f\in V^*$ is given and $u\in W$ is a solution to be found. We obtain well-posedness results if we impose boundary conditions which are expressed in terms of (strongly) admissible subspaces $\Tau$ of $W$ (see Section~\ref{sec:3} and Section~\ref{sec:4}).  Our main application concerns the evolution equations also considered by Lions (and which have seen a remarkable revival recently; see e.g. Ouhabaz and Spina \cite{OS10}, Haak and Ouhabaz \cite{HO15}, Ouhabaz \cite{Ou15}, Auscher and Egert \cite{AE16}, Dier and Zacher \cite{DZ17}, Achache and Ouhabaz \cite{AO18,AO19} as well as \cite{ADLO14, AM16}). For these
 evolution equations, we obtain a new result whose originality lies in the very  general boundary conditions.   

It looks as follows. Let $U,H$ be Hilbert spaces where $U$ is continuously and densely embedded in $H$ so that we have the Gelfand triple 
 \[   {U}\stackrel{d}{\hookrightarrow} H\hookrightarrow {U}'.\]
Let $W=H^1(0,T;{U}')\cap L^2(0,T;{U})$. Then, as is well-known, $W\subset {\mathcal C}([0,T],H)$. Let $V=L^2(0,T;{U})$ and let ${\mathcal A}\in {\mathcal L}(V,V^*)$ be coercive. Typically, one chooses for ${\mathcal A}$ a non-autonomous differential operator. The following is proved in Section~\ref{sec:appli}. 
\begin{thm}\label{th:int1}
Let $\Phi\in {\mathcal L}(H)$ be a contraction, $f\in  L^2(0,T;{U}')$, $y_0\in H$. Then there exists a unique $u\in W$ such that 
\begin{equation}\label{eq:1.2a}
u'+{\mathcal A}u=f \mbox{ in } L^2(0,T;V')
 \end{equation} 
 \begin{equation}\label{eq:1.2b}
 u(0)=\Phi^*u(T) +y_0\mbox{ in }H.
 \end{equation} 
\end{thm}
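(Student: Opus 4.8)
The plan is to recognize Theorem~\ref{th:int1} as a special case of the abstract derivation problem \eqref{eq:fond} with a well-chosen admissible boundary subspace $\Tau$. I would take $V=L^2(0,T;U)$, $R=C^1_c((0,T);U)$ (or a similar dense test-function space) and let $\mathcal D$ be the distributional time derivative; the antisymmetry $\langle\mathcal D r_1,r_2\rangle+\langle\mathcal D r_2,r_1\rangle=0$ holds by integration by parts since $r_1,r_2$ vanish at the endpoints. The natural Hilbert space $W$ onto which $\mathcal D$ extends is then exactly $H^1(0,T;U')\cap L^2(0,T;U)$, which embeds continuously in $V$ and also in $\mathcal C([0,T],H)$. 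So the analytic heavy lifting — that \eqref{eq:fond} with a coercive $\mathcal A$ and a strongly admissible $\Tau$ is well-posed — is already done in Sections~\ref{sec:3}--\ref{sec:4}; what remains is to identify the correct $\Tau$ encoding the boundary condition \eqref{eq:1.2b} and to reduce the inhomogeneous data $(f,y_0)$ to the homogeneous framework.

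Concretely, first I would reduce to $y_0=0$. Pick any $w\in W$ with $w(0)-\Phi^*w(T)=y_0$ (for instance $w(t)=((1-t/T)+ (t/T)\Phi^*)^{-1}$-type lift, or more simply solve an auxiliary autonomous problem; existence of such a $w$ is an easy surjectivity statement for the trace map $u\mapsto(u(0),u(T))$ on $W$, using $W\hookrightarrow\mathcal C([0,T],H)$). Replacing $u$ by $u-w$ and $f$ by $f-(w'+\mathcal Aw)$ absorbs $y_0$, so it suffices to treat \eqref{eq:1.2b} with $y_0=0$, i.e. $u(0)=\Phi^*u(T)$. Then I would set
\[
 \Tau=\{\,z\in W : z(0)=\Phi^*z(T)\,\},
\]
a closed subspace of $W$, and check that it is (strongly) admissible in the sense of Section~\ref{sec:3}: the key algebraic identity to verify is that for $z_1,z_2\in\Tau$ the boundary term produced by $\mathcal D$, namely $\langle z_1(T),z_2(T)\rangle_H-\langle z_1(0),z_2(0)\rangle_H$, has the right sign — here it equals $\langle z_1(T),z_2(T)\rangle_H-\langle\Phi^*z_1(T),\Phi^*z_2(T)\rangle_H=\langle(\Id-\Phi\Phi^*)z_1(T),z_2(T)\rangle_H$, which is nonnegative on the diagonal because $\Phi$ (hence $\Phi^*$) is a contraction. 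This dissipativity is precisely what makes $\Tau$ strongly admissible and what makes the contraction hypothesis on $\Phi$ indispensable.

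With $\Tau$ identified and shown admissible, the abstract well-posedness theorem from Section~\ref{sec:4} applied to $\mathcal Du+\mathcal Au=f$ on $\Tau$ gives a unique $u\in\Tau\subset W$ solving \eqref{eq:1.2a}; unwinding the definition of $\Tau$ and reinstating $w$ yields \eqref{eq:1.2b}. For the equation itself one notes that for $u\in W$ the identity $\langle\mathcal Du,v\rangle_{V^*,V}=\int_0^T\langle u'(t),v(t)\rangle_{U',U}\,dt$ for $v\in V$ is exactly the statement $u'+\mathcal Au=f$ in $V^*=L^2(0,T;V')$; so no additional regularity argument is needed beyond what $W$ already gives. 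The main obstacle, I expect, is not the existence/uniqueness (handed to us by RTL via Sections~\ref{sec:3}--\ref{sec:4}) but the bookkeeping: verifying that the abstract notion of admissibility, which is phrased in terms of a sesquilinear boundary form on $W$, specializes to the concrete inequality $\langle(\Id-\Phi\Phi^*)h,h\rangle_H\ge0$, and checking the density/closedness conditions on $\Tau$ inside $W$ that the abstract theorem requires. Once that dictionary is in place the theorem follows immediately.
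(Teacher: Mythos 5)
Your overall strategy (specialize the abstract derivation framework with $V=L^2(0,T;U)$, $\mathcal D=d/dt$ on $R=\mathcal C_c^\infty(0,T;U)$, $W=H^1(0,T;U')\cap L^2(0,T;U)$, boundary form $b(v,w)=\langle v(T),w(T)\rangle_H-\langle v(0),w(0)\rangle_H$) is exactly the paper's, but the choice of the subspace carrying the boundary condition contains a genuine error of sign and of role. You take $\Tau=\{z\in W: z(0)=\Phi^*z(T)\}$ and argue it is admissible because $b(z,z)=\langle(\Id-\Phi\Phi^*)z(T),z(T)\rangle_H\ge 0$ there. But admissibility in Definition~\ref{def:3.2} requires $b(w,w)\le 0$ on the subspace (this is what makes the operator $B$ in the proof of Theorem~\ref{th:3.5new} dissipative and lets RTL produce a solution); the inequality you verified is the one characterizing the $b$-orthogonal $Z^b$, not $Z$. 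For a genuine contraction your space is not admissible at all: for $\Phi=0$ (the Lions initial-value problem) it is $\{z: z(0)=0\}$, on which $b(z,z)=\|z(T)\|_H^2\ge 0$ with strict inequality in general. Worse, using it as the space of test functions in (WDP) encodes the wrong problem: integrating by parts against $z$ with $z(0)=0$ prescribes $u(T)$, i.e.\ a final-value (backward) problem, which is not well-posed for a coercive non-symmetric $\mathcal A$. So the error is not a mere convention.

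The second, related gap is that you collapse the test space and the solution space. In the paper's scheme (Theorem~\ref{th:4.12} with $B_0w=w(0)$, $B_1w=w(T)$), the admissible test space is $Z_\Phi=\{z\in W: z(T)=\Phi z(0)\}$, on which $b(z,z)=\|\Phi z(0)\|_H^2-\|z(0)\|_H^2\le 0$ precisely because $\Phi$ is a contraction; the solution of (SDP) does \emph{not} lie in $Z_\Phi$ but instead satisfies the $b$-adjoint condition $B_0u-\Phi^*B_1u=y_0$, i.e.\ $u(0)=\Phi^*u(T)+y_0$, via Proposition~\ref{prop:4.9} ($Z_\Phi^b=Z_{\Phi^*}$) together with $B_0Z_\Phi=\ran B_0=H$ (which needs $\ker B_0+\ker B_1=W$ and the trace Lemma~\ref{lem:5.2n}). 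This duality between the condition imposed on test functions and the condition inherited by the solution is the content of the theorem and cannot be bypassed by ``unwinding the definition of $\Tau$.'' Your reduction of $y_0$ to $0$ by a lift is an acceptable alternative to the paper's direct treatment of $y_0$ through the functional $g(z)=\langle y_0,B_0z\rangle_H$ (both ultimately rest on surjectivity of the trace), but as written the core existence step of your argument does not go through; to repair it you must test against $Z_\Phi$, not against $\{z:z(0)=\Phi^*z(T)\}$, and then derive the boundary condition of $u$ from $b(u,z)=-\langle g,z\rangle$ as in Theorem~\ref{th:4.12}.
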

Theorem~\ref{th:int1} is known if the form $a:V\times V\to \R$, given by $a(u,v)=\langle {\mathcal A}u,v\rangle_{V^*, V}$ is symmetric (see 
\cite[III. Proposition~2.4]{Sho97}) but the estimates given here for  the non-symmetric case are more demanding. 

In Theorem~\ref{th:int1}, not only well-posedness but also the \emph{maximal regularity} of the solution are remarkable: all three terms of the evolution equation $u'$, ${\mathcal A}u$ and $f$ are in the space  $L^2(0,T;{U}')$ (see \cite{ADF17} for more information on such regularity properties).

The theory of derivations as developed in the present article has applications for completely different subjects. If we identify $V$ and $V'$ by the Riesz Theorem, then a densely defined operator $S$ on $V$ is symmetric if and only if $iS$ is  a derivation. It turns out that our results on boundary operators allow also a description of all selfajoint extensions of the symmetric operator $S$. In fact we refine a version of the theory of boundary triplets as known in the literature. The circle of these ideas is presented in     
\cite{RDV3}. \\

We start this article by a review of the Representation Theorem of Lions (RTL). The new point is that we deduce it from a  result on operators which are defined on normed space (incomplete, in general). This allows us to prove RTL also in a non-Hilbert context (in contrast with Lions \cite[  Th\'eor\`eme 1.1]{Lio61} and Showalter \cite[III. Theorem 2.1]{Sho97}).  We give some   applications of the non-Hilbert version to the Poisson equation for measures in Section~\ref{sec:1}.  The structure of the paper is as follows. 
 \tableofcontents
\section{Representation theorems for continuous linear or antilinear forms and dissipative operators}\label{sec:1}
In this section we describe (generalizations of) the representation theorems of Lions and give an application to dissipative operators.    
\subsection{Representation theorems of Lions}
The representation theorems of Lions (abbreviated by RTL) 
 may be  expressed in terms of operators (which seems to be new), or in terms of bilinear forms or  sesquilinear forms. They depend also heavily on the topological properties (completeness, reflexivity) of the involved normed vector spaces over $\K=\R$ or $\C$.  The antidual of a normed space $X$ is denoted by  $X'$,
whereas the dual space is denoted by $X^*$.

 \begin{thm}[Representation Theorem of Lions, real version]\label{th:1.1real}
 	Let $X$ be a reflexive Banach space and $Y$ be a normed space over $\R$. Let $a:X\times Y\to \R$ be bilinear  such that, for each $y\in Y$,  $x\mapsto a(x,y)$ is continuous on $X$. Let $\beta>0$. The following assertions are equivalent:
 	\begin{itemize}
 		\item[(i)] $\sup_{x\in X,\|x\|_X\leq 1} a(x,y) \geq \beta \|y\|_Y$;
 		\item[(ii)] for all $L\in Y^*$, the dual of $Y$, there exists $x_L\in X$ such that 
 		\[	\|x_L\|\leq \frac{1}{\beta}\|L\|_{Y^*}\mbox{ and }\langle L ,y\rangle_{Y^*,Y} =a(x_L,y\rangle,\,\,\, y\in Y  \]
 	\end{itemize}   
 \end{thm}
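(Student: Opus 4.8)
The proof is a clean application of functional analysis, essentially a quantitative open-mapping / closed-range argument. The plan is to show (ii)$\Rightarrow$(i) first, since it is the easy direction: given $y \in Y$ and $\varepsilon > 0$, pick $L \in Y^*$ with $\|L\|_{Y^*} = 1$ and $\langle L, y \rangle_{Y^*,Y} \geq \|y\|_Y - \varepsilon$ (Hahn--Banach); applying (ii) produces $x_L$ with $\|x_L\|_X \leq 1/\beta$ and $a(x_L, y) = \langle L, y\rangle \geq \|y\|_Y - \varepsilon$, so $\sup_{\|x\|_X \leq 1/\beta} a(x,y) \geq \|y\|_Y - \varepsilon$; rescaling by $\beta$ and letting $\varepsilon \to 0$ gives (i).

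For the main direction (i)$\Rightarrow$(ii), the idea is to reformulate everything through the operator $T \colon X \to Y^*$ defined by $\langle Tx, y \rangle_{Y^*,Y} = a(x,y)$; this is well-defined and bounded because each $a(\cdot, y)$ is continuous and $X$ is a Banach space (so one can invoke the uniform boundedness principle, or simply note $T$ maps into $Y^*$ with $\|Tx\|_{Y^*} = \sup_{\|y\|_Y \le 1} a(x,y) \le C\|x\|_X$ after a closed-graph-type argument). Then assertion (i) says exactly that the adjoint-like map, or rather $T$ itself composed with evaluation, satisfies $\sup_{\|x\|_X \le 1} \langle Tx, y\rangle \ge \beta \|y\|_Y$ for all $y$. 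I would reinterpret this as a lower bound on $T$ viewed through the canonical embedding $J_Y \colon Y \to Y^{**}$: for every $\eta$ in the image $J_Y(Y) \subseteq Y^{**}$ we have $\|\eta\|_{Y^{**}}$ controlled, and we want to solve $Tx = L$ with norm control. The key is that (i) forces the range of $T$ to contain $Y^*$ (or a suitable predual pairing), with the explicit estimate $\|x_L\|_X \le \frac1\beta \|L\|_{Y^*}$.

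Concretely, fix $L \in Y^*$. Consider the linear functional on the range of $T$ (inside $Y^*$) — but it is cleaner to argue by a minimization: on the reflexive space $X$, seek $x_L$ minimizing $\|x\|_X$ subject to $\langle Tx, y\rangle = \langle L, y\rangle$ for all $y \in Y$, i.e. $Tx = L$. Existence of a solution is the heart of the matter; once a solution exists, reflexivity and weak lower semicontinuity of the norm give a minimal-norm one, and the estimate $\|x_L\|_X \le \frac1\beta\|L\|_{Y^*}$ follows from (i) by a duality/separation argument: if no $x$ with $\|x\|_X \le \frac1\beta\|L\|_{Y^*}$ satisfied $Tx = L$, one separates $L$ from the convex set $T(\frac1\beta\|L\|_{Y^*} B_X)$ — which is closed by reflexivity of $X$ and boundedness of $T$ — by a functional $y \in Y$ (using that $Y^*$'s predual action is by elements of $Y$, available since we only need to separate using the weak-$*$ topology $\sigma(Y^*, Y)$), contradicting the lower bound in (i).

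The main obstacle is precisely establishing that $T$ has closed range / that the constrained problem $Tx = L$ is solvable, using only that $X$ is reflexive and $Y$ is merely a normed (incomplete) space: one cannot appeal to completeness of $Y$ and must instead work with the weak-$*$ topology $\sigma(Y^*, Y)$ throughout, noting that the unit ball of $X$ is weakly compact (reflexivity) so its image under $T$ is $\sigma(Y^*,Y)$-compact, hence $\sigma(Y^*,Y)$-closed, hence norm-closed and convex — this is exactly where reflexivity of $X$ is indispensable and where the incompleteness of $Y$ is harmless. I would then assemble the separation argument carefully, taking a little care that the separating functional can be chosen in $J_Y(Y)$ rather than all of $Y^{**}$, which is automatic because the relevant compact convex set lives in $Y^*$ and we separate in the duality $\langle Y^*, Y\rangle$.
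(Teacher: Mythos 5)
The direction (ii)$\Rightarrow$(i) is fine, but the main direction has a gap at its very foundation: the operator $T\colon X\to Y^*$, $\langle Tx,y\rangle_{Y^*,Y}=a(x,y)$, on which everything afterwards rests, is not well defined under the stated hypotheses. The only continuity assumed is in the $x$-variable; nothing forces $y\mapsto a(x,y)$ to be bounded on $Y$, and the whole point of Lions' theorem (as opposed to Lax--Milgram) is precisely that $a$ need not be continuous and $Y$ need not be complete. The uniform boundedness principle cannot rescue this: applied to the family $\{a(\cdot,y):\|y\|_Y\le 1\}\subset X^*$ it requires pointwise boundedness, i.e.\ $\sup_{\|y\|_Y\le 1}|a(x,y)|<\infty$ for each $x$, which is exactly the assertion you are trying to prove; and a closed-graph argument has nothing to act on, since $Tx$ need not lie in $Y^*$ at all. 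A concrete counterexample: take $X=\ell^2$, $Y=\{y\in c_{00}: y_1=0\}$ with the $\ell^2$-norm, let $\phi$ be an unbounded linear functional on $Y$ (Hamel basis), and set $a(x,y)=\langle x,y\rangle_{\ell^2}+x_1\phi(y)$. Each $a(\cdot,y)$ is continuous and $\sup_{\|x\|_X\le 1}a(x,y)=\left(\|y\|^2+\phi(y)^2\right)^{1/2}\ge\|y\|_Y$, so (i) holds with $\beta=1$ (and the conclusion (ii) does hold), yet $a(e_1,\cdot)=\phi\notin Y^*$. Since $T$ does not exist, the subsequent steps --- boundedness of $T$, weak-to-weak$^*$ continuity, $\sigma(Y^*,Y)$-compactness and closedness of $T\bigl(\frac{1}{\beta}\|L\|_{Y^*}B_X\bigr)$ --- collapse.

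The geometric separation idea itself is salvageable, but only if you stop insisting that $T$ lands in $Y^*$: for fixed $L$, send the weakly compact convex set $\frac{1}{\beta}\|L\|_{Y^*}B_X$ into the algebraic dual of $Y$ equipped with the topology of pointwise convergence on $Y$ (this map is continuous from the weak topology of $X$ because each $a(\cdot,y)\in X^*$), obtain a compact convex image, and if $L$ is not in it separate by some $y\in Y$; the lower bound (i) then gives $\langle L,y\rangle>\|L\|_{Y^*}\|y\|_Y$, a contradiction. This is genuinely different from the paper's route, which defines the operator in the opposite direction, $T\colon Y\to X^*$, $Ty=a(\cdot,y)$ --- automatically well defined by the hypothesis, and not required to be continuous --- observes that (i) says $\|Ty\|_{X^*}\ge\beta\|y\|_Y$, and concludes via a Hahn--Banach extension lemma for such operators (Theorem~\ref{th:1.2}) together with reflexivity of $X$. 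As written, however, your argument has a genuine gap, not merely a different method.
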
 
Next we consider the complex version. 
 \begin{thm}[Representation Theorem of Lions, complex version]\label{th:1.1}
Let $X$ be a reflexive Banach space and $Y$ be a normed space over $\C$. Let $a:X\times Y\to \C$ be sesquilinear  such that, for each $y\in Y$,  $x\mapsto a(x,y)$ is continuous on $X$. Let $\beta>0$. The following assertions are equivalent:
\begin{itemize}
	\item[(i)] $\sup_{x\in X,\|x\|_X\leq 1}\re (a(x,y))=\sup_{x\in X,\|x\|_X\leq 1}|a(x,y)|\geq \beta \|y\|_Y$;
	\item[(ii)] for all $L\in Y'$, the antidual of $Y$, there exists $x_L\in X$ such that 
\[	\|x_L\|\leq \frac{1}{\beta}\|L\|_{Y'}\mbox{ and }\langle L ,y\rangle_{Y',Y} =a(x_L,y\rangle,\,\,\, y\in Y  \]
\end{itemize}   
\end{thm}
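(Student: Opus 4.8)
The plan is to deduce the complex version from the real one (Theorem~\ref{th:1.1real}) by restriction of scalars. Denote by $X_\R$ and $Y_\R$ the spaces $X$ and $Y$ regarded as \emph{real} normed spaces, with unchanged norms, and put
\[ b(x,y):=\re\bigl(a(x,y)\bigr)\qquad(x\in X_\R,\ y\in Y_\R). \]
Then $b$ is $\R$-bilinear and, for each fixed $y\in Y$, the map $x\mapsto b(x,y)$ is continuous on $X_\R$, since $x\mapsto a(x,y)$ is continuous on $X$ and the norms coincide. Moreover $X_\R$ is a reflexive real Banach space, because reflexivity is preserved under restriction of scalars. Hence Theorem~\ref{th:1.1real} applies to the triple $(X_\R,Y_\R,b)$ with the same constant $\beta$.

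Two elementary remarks handle the passage back and forth between the real and complex pictures. First, for every $y$ and every $x$ with $\|x\|_X\le 1$, multiplying $x$ by a suitable unimodular scalar (which does not change $\|x\|_X$) makes $a(x,y)$ real and nonnegative; therefore $\sup_{\|x\|_X\le1}\re(a(x,y))=\sup_{\|x\|_X\le1}|a(x,y)|$ holds automatically, so that, in view of this automatic equality, condition (i) of Theorem~\ref{th:1.1} for $a$ is exactly condition (i) of Theorem~\ref{th:1.1real} for $b$. Second, $L\mapsto \re\circ L$ is an $\R$-linear isometric bijection of the antidual $Y'$ onto the real dual $(Y_\R)^*$, with inverse $M\mapsto\bigl(y\mapsto M(y)+iM(iy)\bigr)$; the isometry follows from the same unimodular-rotation trick applied to antilinear functionals, and one also checks that two continuous antilinear functionals with the same real part coincide, since $\Im\,\ell(y)=\re\,\ell(iy)$ for antilinear $\ell$.

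Granting these remarks, I would argue as follows. Suppose (i) holds for $a$; then (i) holds for $b$, so Theorem~\ref{th:1.1real} yields, for each $M\in(Y_\R)^*$, an $x_M\in X$ with $\|x_M\|_X\le\frac1\beta\|M\|$ and $b(x_M,y)=\langle M,y\rangle$ for all $y$. Given $L\in Y'$, apply this to $M:=\re\circ L$: since $\re(a(x_M,\cdot))=\re(L(\cdot))$ and both $a(x_M,\cdot)$ and $L$ are continuous antilinear forms, the second remark gives $a(x_M,y)=\langle L,y\rangle_{Y',Y}$ for all $y$, while $\|x_M\|_X\le\frac1\beta\|M\|=\frac1\beta\|L\|_{Y'}$; this is (ii). Conversely, if (ii) holds, then for $M\in(Y_\R)^*$ take the antilinear lift $L\in Y'$ with $\re\circ L=M$ and $\|L\|_{Y'}=\|M\|$; the vector $x_L$ supplied by (ii) satisfies $b(x_L,y)=\re(a(x_L,y))=\langle M,y\rangle$ and $\|x_L\|_X\le\frac1\beta\|M\|$, which is condition (ii) of Theorem~\ref{th:1.1real} for $b$, whence condition (i) for $b$ and, by the first remark, condition (i) for $a$.

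I do not anticipate a real obstacle: the argument is the routine ``realification'' reduction, and the only points needing care are that $Y'$ and $(Y_\R)^*$ are \emph{isometrically} identified and that equality of real parts upgrades to equality of antilinear forms --- both consequences of the one-complex-variable rotation trick --- together with the (standard) stability of reflexivity of $X$ under restriction of scalars.
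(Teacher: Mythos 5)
Your reduction of the complex statement to the real one by realification is correct, but it is a genuinely different route from the paper's. The paper does not pass through Theorem~\ref{th:1.1real} at all: it proves the operator-theoretic Theorem~\ref{th:1.2} (an injectivity-with-lower-bound versus dual-surjectivity equivalence on normed spaces, via Hahn--Banach on the range of $T$), and then obtains Theorem~\ref{th:1.1} by applying it to $T:Y\to X'$, $Ty=\overline{a(\cdot,y)}$, using reflexivity of $X$ to read $X''\simeq X$; the real version is stated alongside and is handled by the same mechanism. Your argument instead takes Theorem~\ref{th:1.1real} as a black box and transfers it through the isometric identification $L\mapsto\re\circ L$ of the antidual $Y'$ with $(Y_\R)^*$, the rotation trick for the suprema, and preservation of reflexivity under restriction of scalars --- all of which are sound. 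What each approach buys: yours is shorter and exhibits the complex case as a formal consequence of the real one, but it is only a relative proof (the real case, in the generality stated here with $X$ reflexive Banach and $Y$ merely normed, is itself the paper's generalization of Lions and is proved there by the same Theorem~\ref{th:1.2} argument), whereas the paper's operator formulation treats $\K=\R$ and $\K=\C$ uniformly and is what later powers the dissipativity characterization (Theorem~\ref{th:2.5new}).

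One small repair: in the direction (i)$\Rightarrow$(ii) you justify $a(x_M,\cdot)=L$ by saying both are \emph{continuous} antilinear forms with the same real part, but continuity of $y\mapsto a(x_M,y)$ is not among the hypotheses (the theorem is stated precisely without joint continuity of $a$). This is harmless: the identity $\Im\ell(y)=\re\,\ell(iy)$, which you state, holds for every antilinear $\ell$ regardless of continuity, so equality of real parts already forces $a(x_M,y)=\langle L,y\rangle_{Y',Y}$ for all $y$; just drop the appeal to continuity at that point.
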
 
In Theorem~\ref{th:1.1}, it is remarkable that $Y$ does not need to be complete and  $a$ does not need to be continuous.

This theorem is a consequence of the following result involving operators instead of sesquilinear forms. 
\begin{thm}\label{th:1.2}
Let $E$ and $F$ be normed spaces over $\K$, $T:E\to F$ linear and $\beta>0$. 
The following assertions are equivalent:
\begin{itemize}
	\item[(i)] $\|Tx\|_F\geq \beta \|x\|_E$ for all $x\in E$;
	\item[(ii)] for all $x^*\in E^*$, there exists $y^*\in F^*$ such that 
	\[ x^*=y^*\circ T\mbox{ and }\|y^*\|\leq \frac{\|  x^*\|}{\beta}.    \]
	\item[(iii)] for all $x'\in E'$, there exists $y'\in F'$ such that 
	\[ x'=y'\circ T\mbox{ and }\|y'\|\leq \frac{\|  x'\|}{\beta}.    \]
\end{itemize}
\end{thm}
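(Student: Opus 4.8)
The plan is to prove the cyclic chain of implications $(i)\Rightarrow(ii)\Rightarrow(i)$ and $(i)\Rightarrow(iii)\Rightarrow(i)$, using the Hahn--Banach theorem as the only nontrivial ingredient; the arguments for the dual and antidual versions are formally identical, so I would treat $(ii)$ in detail and indicate the trivial modification for $(iii)$. The reverse implications $(ii)\Rightarrow(i)$ and $(iii)\Rightarrow(i)$ are the easy direction: given $x\in E$ with $x\neq 0$, pick by Hahn--Banach a norming functional $x^*\in E^*$ with $\|x^*\|=1$ and $x^*(x)=\|x\|_E$; apply $(ii)$ to get $y^*\in F^*$ with $x^*=y^*\circ T$ and $\|y^*\|\le 1/\beta$, whence $\|x\|_E = x^*(x) = y^*(Tx) \le \|y^*\|\,\|Tx\|_F \le \frac1\beta \|Tx\|_F$, which is $(i)$.

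For the main implication $(i)\Rightarrow(ii)$, first observe that $(i)$ forces $T$ to be injective with closed range: if $Tx_n\to z$ in $F$, then $(x_n)$ is Cauchy in $E$, but $E$ need not be complete, so I cannot quite say the range is closed---instead I work on the range directly as a normed (possibly incomplete) space. The clean approach is: given $x^*\in E^*$, define a linear functional $\ell$ on the subspace $\ran T\subseteq F$ by $\ell(Tx) := x^*(x)$. This is well defined because $(i)$ gives injectivity of $T$. Moreover $\ell$ is bounded on $\ran T$ with norm $\le \|x^*\|/\beta$: indeed $|\ell(Tx)| = |x^*(x)| \le \|x^*\|\,\|x\|_E \le \frac{\|x^*\|}{\beta}\,\|Tx\|_F$ by $(i)$. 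Now apply the Hahn--Banach theorem to extend $\ell$ from the subspace $\ran T$ to a functional $y^*\in F^*$ with $\|y^*\| = \|\ell\|_{\ran T} \le \|x^*\|/\beta$. By construction $y^*(Tx) = \ell(Tx) = x^*(x)$ for all $x\in E$, i.e. $x^* = y^*\circ T$, which is exactly $(ii)$. The implication $(i)\Rightarrow(iii)$ is identical, replacing $E^*,F^*$ by the antiduals $E',F'$ and using the antilinear Hahn--Banach theorem (or applying the linear version to the conjugate-linear functionals via $\overline{x'}$).

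I do not anticipate a genuine obstacle here: the only subtlety is the one just flagged---that neither $E$ nor $F$ is assumed complete, so one must be careful never to invoke completeness or closedness of $\ran T$, and instead run Hahn--Banach on the raw subspace $\ran T$, which is legitimate since Hahn--Banach needs no completeness assumption on either the subspace or the ambient space. One should also note that the estimate $\|\ell\|_{\ran T}\le\|x^*\|/\beta$ may in principle be a strict inequality (if $x^*$ is not "efficiently detected" by $T$), but that only helps; the Hahn--Banach extension preserves whatever the subspace norm of $\ell$ actually is, and $\le\|x^*\|/\beta$ is all that is claimed. Everything else is bookkeeping, and the chain $(ii)\Leftrightarrow(i)\Leftrightarrow(iii)$ then yields the full equivalence.
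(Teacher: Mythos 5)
Your proposal is correct and follows essentially the same route as the paper: for (i)$\Rightarrow$(ii) your functional $\ell(Tx)=x^*(x)$ on $\ran T$ is exactly the paper's $x^*\circ T^{-1}$ on $Z=TE$, extended by Hahn--Banach with the same norm bound $\|x^*\|/\beta$, and the converse direction via a norming functional is identical, as is the remark that (iii) follows by the same argument with antiduals. No gaps; your care about never invoking completeness or closedness of $\ran T$ matches the paper's treatment.
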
 
Note that, in Theorem~\ref{th:1.2}, $T$ is not necessarily continuous. On the other side, if $T\in {\mathcal L}(E,F)$, then $x^*=T^* y^*$ and (ii) (resp. (iii)) means that $T^*$ (resp. $T'$) is surjective and $\|T^*\|\leq \frac{1}{\beta}$ (resp. $\|T'\|\leq \frac{1}{\beta}$).   
\begin{proof}
(i)$\Rightarrow$ (ii): Let $Z=TE\subset F$.  Then $T:E\to Z$ is bijective and 
\[   \|T^{-1}z\|_E\leq \frac{1}{\beta}\|z\|_F,\,\, z\in Z.\]	
Let $x^*\in E^*$. Then $x^*\circ T^{-1}\in Z^*$. From the Hahn-Banach theorem, there exists $y^*\in F^*$ such that $y^*_{|Z}=x^*\circ T^{-1}$ and 
\[  \|y^*\|_{F^*}=\|x^*\circ T^{-1}\|_{Z^*}\leq \frac{1}{\beta}\|x^*\|_{E^*}.  \]
Now, for $x\in E$, 
\[   \langle x^*,x\rangle_{E^*,E} = \langle x^* , T^{-1} T x\rangle_{E^*,E} =\langle y^*, Tx\rangle_{Y^*,Y}.   \] 
(ii)$\Rightarrow$(i): Let $x\in E$. Then, by the Hahn-Banach theorem, there exists $x^*\in E^*$, $\|x^*\|\leq 1$ with $\langle x^*, x\rangle_{X^*,X} =\|x\|$. By (ii) there exists $y^*\in F^*$ with $\|y^*\|\leq \frac{1}{\beta}$ and $x^*=y^*\circ T$. It follows that 
\[    \|Tx\|_F\geq \beta \langle y^*, Tx\rangle_{Y^*,Y} =\beta \langle x^*,x\rangle_{X^*,X} =\beta \|x\|_E. \]   
The equivalence with (iii) follows from similar arguments.  
\end{proof}	
\begin{proof}[Proof of Theorem~\ref{th:1.1}]
First note that
 \[\sup_{x\in X,\|x\|_X\leq 1}\re a(x,y) = \sup_{x\in X,\|x\|_X\leq 1} |a(x,y)|.\]
  Indeed, for each $x\in X$ and $y\in Y$, there exists $\theta\in\R$ such that 
  $a(x,y)=e^{i\theta} |a(x,y)|$.   Then $\re a(e^{-i\theta}x,y)=|a(x,y)|$ with $\|e^{-i\theta}x\|=\|x\|$. 
 	
Let $E=Y$ and $F=X'$, the antidual of $X$. Define $T:E\to F$ by $Tw=\overline{a(.,w) }$. Then $T$ is linear and 
\[  \sup_{x\in X,\|x\|_X\leq 1}\re {a(x,y)}\geq \beta \|y\|_Y\Longleftrightarrow \|Ty\|_{X'}\geq \beta \|y\|_Y,   \]
since $\re \overline{a(x,y)}= \re a(x,y)$. 	
By Theorem~\ref{th:1.2}, Condition (ii) of Theorem~\ref{th:1.1} holds if and only if, for all $L\in Y'$, there exists $x_L\in X''\simeq X$ ($X$ is reflexive) such that $\|x_L\|\leq \frac{\|L\|_{Y'}}{\beta}$ and 
\[ \langle L, y\rangle_{Y',Y}  =\langle Ty, x_L\rangle_{X',X}=a(x_L,y),\,\,\, y\in Y  .\]  
\end{proof}

\begin{cor}\label{cor:2.3}
Let $\K=\R$ or $\C$. 	
Let $X$ be a reflexive Banach space and $Y$ be a normed space such that $Y$ embeds continuously in $X$. 
Let $\beta >0$ and let ${a}$ be a sesquilinear form on $X\times Y$ such that
\begin{itemize}
\item[a)] $ |{a}(y,y)|\geq \beta \|y\|_{Y}^2$ for all 
$y\in Y$;
\item[b)] for all $y\in Y$, $x\mapsto {a}(x,y)$ is continuous on $X$.  
 \end{itemize}
Let $L\in Y'$. 
 Then there exists $x_L\in X$ such  that  
\[ 
a(x_L,y)=\langle L, y\rangle_{Y',Y}\mbox{ for all }y\in Y.  \]
\end{cor}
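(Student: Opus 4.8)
The plan is to deduce Corollary~\ref{cor:2.3} from the Representation Theorem of Lions, Theorem~\ref{th:1.1} (and, in the real case, Theorem~\ref{th:1.1real}, by the same argument). Let $j\colon Y\to X$ denote the continuous embedding and fix $c>0$ with $\|jy\|_X\le c\,\|y\|_Y$ for every $y\in Y$. Hypothesis b) is precisely the standing continuity hypothesis of Theorem~\ref{th:1.1}, so the only thing to verify is the one-sided lower bound (i) of that theorem, namely that
\[ \sup_{x\in X,\ \|x\|_X\le 1}|a(x,y)|\ \ge\ \frac{\beta}{c}\,\|y\|_Y\qquad\text{for all }y\in Y. \]
Everything else will then be automatic.

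To check this bound, I would fix $y\in Y$, $y\neq 0$ (the case $y=0$ being trivial), and simply test the supremum against the single admissible vector $x=jy/\|jy\|_X\in X$, which has $\|x\|_X=1$ and is nonzero because $j$ is injective (alternatively because $|a(jy,y)|\ge\beta\|y\|_Y^2>0$). By sesquilinearity of $a$ in its first slot and hypothesis a),
\[ |a(x,y)|=\frac{|a(jy,y)|}{\|jy\|_X}\ \ge\ \frac{\beta\,\|y\|_Y^2}{\|jy\|_X}\ \ge\ \frac{\beta\,\|y\|_Y^2}{c\,\|y\|_Y}=\frac{\beta}{c}\,\|y\|_Y, \]
which is exactly (i) with the constant $\beta/c$ in place of $\beta$.

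With (i) in hand, Theorem~\ref{th:1.1} applies (with $\beta$ replaced by $\beta/c$) and produces, for each $L\in Y'$, a vector $x_L\in X$ satisfying $\langle L,y\rangle_{Y',Y}=a(x_L,y)$ for all $y\in Y$, which is the assertion of the corollary; as a free bonus it also yields the quantitative estimate $\|x_L\|_X\le \frac{c}{\beta}\|L\|_{Y'}$, not claimed in the statement. There is essentially no real obstacle here: the only point needing a little care is the passage from "coercivity on the diagonal" of a form defined on the \emph{product} $X\times Y$ to the one-sided supremum condition of Theorem~\ref{th:1.1} — and this is precisely the role of the continuous embedding $Y\hookrightarrow X$, which lets the diagonal elements $jy$ serve as admissible unit test vectors in $X$ while controlling $\|jy\|_X$ by $\|y\|_Y$.
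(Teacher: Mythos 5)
Your proposal is correct and follows essentially the same route as the paper: test the supremum in condition (i) of Theorem~\ref{th:1.1} against the unit vector $y/\|y\|_X$ (viewed in $X$ via the embedding), use the diagonal lower bound a) together with the embedding constant to obtain the estimate with $\beta/c$ in place of $\beta$, and then invoke the Representation Theorem of Lions. The only cosmetic difference is that you make the embedding $j$ and the constant explicit, which the paper does implicitly.
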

\begin{proof}
	There exists $c_Y>0$ such that $\|y\|_X\leq c_Y\|y\|_Y$ for all $y\in Y$. Thus for $y\in Y\setminus\{0\}$, 
\begin{eqnarray*}
\sup_{x\in X,\|x\|_X\leq 1}|a(x,y)| & \geq & \frac{1}{\|y\|_X}|a(y,y)|\\
  & \geq & \frac{\beta}{\|y\|_X}\|y\|_Y^2\\
   & \geq & \frac{\beta}{c_Y}\|y\|_Y.
\end{eqnarray*}
Now the claim follows from Theorem~\ref{th:1.1}. 
\end{proof}

Lions proved Theorem~\ref{th:1.1real} \cite[III, Th\'eor\`eme 1.1]{Lio61} (see also \cite[III.2 Theorem~2.1]{Sho97}) considering merely Hilbert  (and pre-Hilbert) spaces. Our proof via Theorem~\ref{th:1.2} seems to be new. 

We refer to the circle of results from Theorem~\ref{th:1.1real} to Corollary~\ref{cor:2.3} as RTL, that is, \emph{representation theorems of Lions}. 
\subsection{Dissipative operators}
The following considerations are somehow a side order in this paper. Its aim is to show that Theorem~\ref{th:1.2} is useful also in a  non-Hilbert space situation. We will give a new characterization of dissipativity (Theorem~\ref{th:2.5new}), a notion which will also be used in   Section~\ref{sec:3}.\\

An \emph{operator} $B$ on a Banach space $X$ is a linear mapping from $\dom(B)$ to $X$, where the \emph{domain} $\dom(B)$ of $B$ is a subspace of $X$. By 
\[ \|x\|_B:=\|x\|_X +\|Bx\|_X \]
we denote the \emph{graph norm} of $B$.  The operator $B$ is called \emph{closed} if its graph 
\[G(B):=\{  (x,Bx):x\in \dom(B)\}\]
is closed in $X\times X$. This means that for $x_n\in \dom(B)$
\begin{equation}\label{eq:1.1}
x_n\to x\mbox{ and  } Bx_n\to y\mbox{ in }X\mbox{ implies } x\in \dom(B) \mbox{ and }Bx=y. 
\end{equation}
In other words, $B$ is closed if and only if $(\dom(B),\|\cdot\|_B)$ is complete. 

An operator $B$ is called \emph{closable} if $(0,y)\in \overline{G(B)}\subset X\times X$ implies $y=0$.   Then there exists a unique operator $\overline{B}$ on $X$, the \emph{closure} of $B$ such that $\overline{G(B)}=G(\overline{B})$. 

We call $B$ \emph{dissipative} if  
  \begin{equation}\label{eq:1.2}
  \|x-tBx\| _X\geq \|x\|_X\mbox{ for all }x\in \dom(B),\,\, t>0.
  \end{equation}
If $X$ is a Hilbert space, this is equivalent to 
 \begin{equation}\label{eq:1.3}
 \re \langle Bx,x\rangle_X \leq 0\mbox{ for all }x\in \dom(B).
 \end{equation}
Returning to  the Banach space context, we say that $B$ is \emph{$m$-dissipative} if $B$ is dissipative and $(\Id - tB)$ is surjective for one (equivalently all) $t>0$. Then $(Id -tB)^{-1}\in{\mathcal L}(X)$ and $\|(\Id - tB)^{-1}\|\leq 1$ for all $t>0$.  A densely-defined dissipative operator is closable and its closure 
$\overline{B}$ is dissipative \cite[Lemma~3.4.4]{ABHN11}. 
\begin{rem}
The operator $B$ generates a contractive $C_0$-semigroup on $X$ if and only if $B$ is $m$-dissipative and has dense domain.  
\end{rem}
If $B$ is \emph{densely defined} (i.e., if $\dom(B)$ is dense in $X$), then the adjoint $B^*$
 of $B$ is defined by its domain $\dom(B^*)$ equal to the set
\[  \{  y^*\in X^*:\exists x^*\in X^*\mbox{ such that }\langle y^*,Bx\rangle_{X^*,X} =\langle x^*,x\rangle_{X^*,X},
 	x\in \dom(A) \}\]
 	  and  \[B^*y^*=x^*,\]
 where   $y^*\in \dom(B^*)$ and $x^*\in X^*$ such that $\langle y^*, Bx\rangle_{X^*,X} =\langle x^*,x\rangle_{X^*,X}$ for all $x\in \dom(B)$. Note that $x^*$ is unique since $\dom(B)$ is dense in $X$. 
 
 Theorem~\ref{th:1.2} allows us to give the following dual characterization of dissipativity, which seems to be new. 
 \begin{thm}\label{th:2.5new}
 Let $B$ be a densely defined operator on a Banach space $X$. The following assertions are equivalent.
 \begin{itemize}
 	\item[(i)] $B$ is dissipative; 
 	\item[(ii)] for all $t>0$ and for all $y^*\in X^*$ there exists $x^*\in \dom(B^*)$ such that 
 	\[ x^* -tB^* x^*=y^*\mbox{ and }\|x^*\|_{X^*}\leq \|y^*\|_{X^*}.  \]
 \end{itemize} 
 \end{thm}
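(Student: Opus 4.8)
The strategy is to apply Theorem~\ref{th:1.2} with the right choice of operator, reading dissipativity of $B$ as a lower bound for the operator $T=\Id-tB$ acting on the graph-normed domain, and reading assertion (ii) as the surjectivity-plus-norm-estimate for the adjoint map. Fix $t>0$ throughout; since the two conditions are each quantified over all $t>0$, it suffices to prove the equivalence for each fixed $t$. Set $E=(\dom(B),\|\cdot\|_X)$ — note we keep the \emph{ambient} norm on $\dom(B)$, not the graph norm — and $F=X$, and let $T=\Id-tB:E\to F$. Then dissipativity of $B$, as in \eqref{eq:1.2}, is precisely the statement $\|Tx\|_F\geq 1\cdot\|x\|_E$ for all $x\in E$, i.e. condition (i) of Theorem~\ref{th:1.2} with $\beta=1$.

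By Theorem~\ref{th:1.2} (equivalence of (i) and (ii) there), $B$ is dissipative if and only if for every $x^*\in E^*$ there is $y^*\in F^*=X^*$ with $x^*=y^*\circ T$ and $\|y^*\|\leq\|x^*\|$. The main work is then bookkeeping: I must reconcile the functional $x^*\in E^*=(\dom(B),\|\cdot\|_X)^*$ with an element of $X^*$, and I must identify the relation $x^*=y^*\circ(\Id-tB)$ with the equation $y^*-tB^*y^*=x^*$ appearing in (ii) of the theorem. For the direction (i)$\Rightarrow$(ii): given $y^*\in X^*$ in the statement of Theorem~\ref{th:2.5new}, restrict it to $\dom(B)$ to get $\tilde y^*\in E^*$ with $\|\tilde y^*\|_{E^*}\leq\|y^*\|_{X^*}$; apply Theorem~\ref{th:1.2}(ii) to obtain $x^*\in X^*$ with $x^*\circ(\Id-tB)=\tilde y^*$ on $\dom(B)$ and $\|x^*\|_{X^*}\leq\|\tilde y^*\|_{E^*}\leq\|y^*\|_{X^*}$. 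Unwinding, $\langle x^*,x\rangle-t\langle x^*,Bx\rangle=\langle y^*,x\rangle$ for all $x\in\dom(B)$, i.e. $\langle x^*,Bx\rangle=\tfrac{1}{t}\langle x^*-y^*,x\rangle$, which by definition of the adjoint means $x^*\in\dom(B^*)$ and $tB^*x^*=x^*-y^*$, that is $x^*-tB^*x^*=y^*$, with the required norm bound. (Here one must be careful that the naming of variables in Theorem~\ref{th:2.5new}(ii) has $x^*$ playing the role of the functional produced by Theorem~\ref{th:1.2}, and the desired norm estimate is on that produced functional, matching $\|y^*\|\leq\|x^*\|/\beta$ with $\beta=1$.)

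For the converse (ii)$\Rightarrow$(i): given any $\tilde y^*\in E^*=(\dom(B),\|\cdot\|_X)^*$, extend it by Hahn--Banach to $y^*\in X^*$ with $\|y^*\|_{X^*}=\|\tilde y^*\|_{E^*}$; apply (ii) to get $x^*\in\dom(B^*)$ with $x^*-tB^*x^*=y^*$ and $\|x^*\|_{X^*}\leq\|y^*\|_{X^*}=\|\tilde y^*\|_{E^*}$. Then for $x\in\dom(B)$, $\langle x^*,(\Id-tB)x\rangle=\langle x^*,x\rangle-t\langle B^*x^*,x\rangle=\langle x^*-tB^*x^*,x\rangle=\langle y^*,x\rangle=\langle\tilde y^*,x\rangle$, so $x^*\circ T=\tilde y^*$ on $E$ with $\|x^*\|\leq\|\tilde y^*\|$. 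This is exactly condition (ii) of Theorem~\ref{th:1.2} (with $\beta=1$), hence condition (i) there, i.e. $\|(\Id-tB)x\|_X\geq\|x\|_X$ for all $x\in\dom(B)$; since $t>0$ was arbitrary, $B$ is dissipative.

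The step I expect to be the genuine obstacle — or at least the one requiring the most care — is the identification of the \emph{functional-composition} relation $x^*=y^*\circ T$ from Theorem~\ref{th:1.2} with the \emph{operator equation} $x^*-tB^*x^*=y^*$ of the adjoint; this forces $x^*$ to land in $\dom(B^*)$ precisely because the equation $\langle x^*,Bx\rangle=\tfrac1t\langle x^*-y^*,x\rangle$ exhibits a bounded functional $x\mapsto\langle x^*,Bx\rangle$ on $(\dom(B),\|\cdot\|_X)$, and density of $\dom(B)$ guarantees uniqueness of $B^*x^*$. Everything else is a matter of matching norms and keeping $t$ fixed; no completeness of $\dom(B)$ is needed, which is the whole point of invoking the Lions-type Theorem~\ref{th:1.2} rather than a reflexivity or closed-range argument.
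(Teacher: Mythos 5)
Your proposal is correct and follows essentially the same route as the paper: apply Theorem~\ref{th:1.2} with $E=\dom(B)$ carrying the norm of $X$, $F=X$, $T=\Id-tB$ and $\beta=1$, then translate $x^*=y^*\circ T$ into the adjoint equation via density of $\dom(B)$. The only cosmetic difference is that the paper identifies $E^*=X^*$ outright (its written choice ``$\|x\|_E=\|x\|_B$'' is evidently a slip for the ambient norm, which is exactly the choice you make explicit), whereas you carry out the restriction/Hahn--Banach extension bookkeeping by hand.
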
 
\begin{proof}
Consider the spaces $E=\dom(B)$ with norm $\|x\|_E=\|x\|_B$ and $F=X$ with $\|x\|_F=\|x\|_X$. Let $t>0$ and consider the linear mapping $T:E\to F$ given by $Tx=x-tBx$. Note that $T$ is not continuous unless $B$ is continuous.  Since $\dom(B)$ is dense in $X$, one has $E^*=X^*$. Theorem~\ref{th:1.2} tells us that 
$\|x-tBx\|_X=\| Tx\|_X\geq \|x\|_X$ for all $x\in E$ if and only if for all $x^*\in X^*$ there exists $y^*\in X^*$ such that $\|y^*\|\leq \|x^*\|$ and $x^*=y^*\circ T$, i.e. $\langle x^*,x\rangle_{X^*,X} =\langle y^*,x-tBx\rangle_{X^*,X}$, or equivalently
\[ \langle y^*, tBx\rangle_{X^*,X} =\langle y^*-x^*,x\rangle_{X^*,X}, \,x\in \dom(B).    \] 	
This in turn is equivalent to $y^*\in \dom(B^*)$ and $tB^*y^*=y^*-x^*$, i.e. $y^*-tB^*y^*=x^*$. 
\end{proof}	
A densely defined operator $B$ is $m$-dissipative if and only if $B$ is closed, dissipative and $\Id -tB^*$ is injective for some (equivalently all) $t>0$. This is true since $\Id -tB$ has dense range if and only if $(\Id -tB^*)$ is injective. We illustrate Theorem~\ref{th:2.5new} by the following example. 
\begin{exam}\label{ex:2.6new}
	Let $\Omega\subset \R^d$ be open and bounded, $X={\mathcal C}_0(\Omega):=\{ u\in {\mathcal C}
	(\overline{\Omega}):u_{| \partial \Omega}=0\}$. Let $Bu=\Delta u$, $\dom(B):=\{u\in {\mathcal C}_0(\Omega) :\Delta u\in {\mathcal C}_0(\Omega)\}$. Then $B$ is dissipative, closed and densely defined. Moreover $B$ is $m$-dissipative if and only if $\Omega$ is Wiener regular (see \cite{AB99}). Note that ${\mathcal C}_0(\Omega)'={\mathcal M}(\Omega):=\{\mu : \mu\mbox{ is a signed measure on }\Omega\}$ with $\|\mu\|$ the total variation of $\mu\in {\mathcal M}(\Omega)$. 
	
	For $\mu,\nu\in {\mathcal M}(\Omega)$, one has 
	\[ \mu\in \dom(B^*) \mbox{ and }B^*\mu=\nu\Longleftrightarrow \int_{\Omega}\Delta ud\mu=\int_{\Omega}ud\nu,  u\in \dom(B).\]
	In particular, $\nu=\Delta \mu$ in the sense of distributions. 
	
	Since $B$ is dissipative, we see from Theorem~\ref{th:2.5new} that for each $t>0$ and each 
	$\nu\in {\mathcal M}(\Omega)$, there exist $\mu\in \dom(B^*)$ such that 
	\begin{equation}\label{eq:1.4new}
	\mu-t\Delta \mu=\nu\mbox{ and }\|\mu\|\leq \|\nu\|.
	\end{equation} 
	Moreover, $\Omega$ is Wiener regular if and only if the solution of \eqref{eq:1.4new} is unique for some (equivalently all) $t>0$. 
\end{exam}  
    
\section{When Lions' Representation Theorem meets derivations}\label{sec:3}
We start introducing the  notion of derivation acting on a Hilbert space. Our aim is to study perturbations of a coercive operator by a derivation, which leads to    define Weak and Strong  Derivation Problems.   
In the remaining of the paper, $V$ is a Hilbert space over $\K=\R$ or $\C$. By $V'$ we denote the antidual of $V$, that is the space of all  continuous  antilinear forms on $V$.  In the case $\K=\R$, $V'=V^*$ where $V^*$ is the dual space.
 \subsection{Derivations} 
The following definition is central for the entire article. 
\begin{defn}\label{def:derivation}
A \emph{derivation} (or \emph{$V$-derivation} if we want to specify the Hilbert space) is a linear mapping 
\[  {\mathcal D}_0 : \r\to V',\]
whose domain $\r$ is a dense subspace  of $V$ satisfying 
\[   \langle    {\mathcal D}_0u,v\rangle_{V',V} +\overline{\langle {\mathcal D}_0v,u\rangle_{V',V}}=0, \]
for all $u,v\in\r$. 
\end{defn}
Here, as previously, we denote by $\langle \, ,\, \rangle_{V',V} $ the duality between $V'$ and $V$, whereas $\langle \, ,\, \rangle_V $ is the scalar product in $V$. 
\begin{rem}
In general we do not want to identify $V'$ and $V$. But if we do so, then ${\mathcal D}_0$ is a derivation if and only if ${\mathcal D}_0$ is \emph{antisymmetric}, i.e. 
\[\langle {\mathcal D}_0u,v\rangle_V=-\overline{\langle {\mathcal D}_0v,u\rangle_V}\mbox{ for all }u,v\in \r .  \]
If $\K=\C$ this is the same as saying that $i{\mathcal D}_0$ is symmetric. 
\end{rem}



In the remain of this entire section,   ${\mathcal D}_0$ is a derivation according to Definition~\ref{def:derivation}. We associate to ${\mathcal D}_0$ a new Hilbert space $W$ in the following way. 
  
\begin{equation}\label{eq:3.1}
W:= \{ v\in V:\exists f\in V' \mbox{ such that }\langle  f,r\rangle_{V',V} +\overline{\langle   {\mathcal D}_0r, v\rangle }_{V',V} =0
\mbox{ for all }r\in \r \}.
\end{equation}
Since $\r$ is dense in $V$, given $v\in W$, there exists at most one $ f\in V'$ satisfying the above requirement. We then denote ${\mathcal D}v:=f$.  

The following is easy to see.   

\begin{lem}\label{lem:3.1}
	\begin{itemize}
		\item[a)] $\r\subset W$ and  ${\mathcal D}r={\mathcal D}_0r$ for all $r\in \r$. 
		\item[b)] $W$ is a Hilbert space endowed with the scalar product
		\[    \langle v,w\rangle_W := \langle v,w\rangle_V +\langle {\mathcal D}v,{\mathcal D}w\rangle_{V'},      \]
		where $V'$ carries the scalar product coming from the Riesz isomorphism  between $V$ and $V'$.    
	\end{itemize}
\end{lem}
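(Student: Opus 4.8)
Part a) is immediate from the definition of $\mathcal D_0$ being a derivation: for $r \in \r$, the identity $\langle \mathcal D_0 r, s\rangle_{V',V} + \overline{\langle \mathcal D_0 s, r\rangle_{V',V}} = 0$ for all $s \in \r$ says precisely that $r \in W$ with $\mathcal D r = \mathcal D_0 r$ (take $f = \mathcal D_0 r$ in \eqref{eq:3.1}). Uniqueness of the $f$ guaranteeing this — already noted after \eqref{eq:3.1} — makes $\mathcal D r = \mathcal D_0 r$ unambiguous.

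For part b), the plan is to first check that $W$ is a linear subspace of $V$ and that $v \mapsto \mathcal D v$ is linear on $W$ — both follow from the uniqueness of $f$ and linearity of $\mathcal D_0$ and of the pairing. One then verifies that $\langle \cdot, \cdot\rangle_W$ as defined is a genuine inner product: it is clearly sesquilinear and positive, and $\langle v, v\rangle_W = 0$ forces $\|v\|_V = 0$, hence $v = 0$. The substantive point is completeness. Let $(v_n)$ be Cauchy in $\|\cdot\|_W$. Then $(v_n)$ is Cauchy in $V$ and $(\mathcal D v_n)$ is Cauchy in $V'$ (with the Riesz inner product, hence in the $V'$-norm), so $v_n \to v$ in $V$ and $\mathcal D v_n \to g$ in $V'$ for some $v \in V$, $g \in V'$. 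I would then pass to the limit in the defining relation $\langle \mathcal D v_n, r\rangle_{V',V} + \overline{\langle \mathcal D_0 r, v_n\rangle_{V',V}} = 0$: the first term converges to $\langle g, r\rangle_{V',V}$ since $\mathcal D v_n \to g$ in $V'$ and $r \in V$ is fixed, and the second converges to $\overline{\langle \mathcal D_0 r, v\rangle_{V',V}}$ since $v_n \to v$ in $V$ and $\mathcal D_0 r \in V'$ is a fixed continuous form. Hence $v \in W$ and $\mathcal D v = g$, so $\|v_n - v\|_W^2 = \|v_n - v\|_V^2 + \|\mathcal D v_n - g\|_{V'}^2 \to 0$.

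The main (and essentially only) obstacle is the completeness argument, and within it the one genuine subtlety is that the defining condition in \eqref{eq:3.1} is a closed condition: passing to the limit in the pairing requires only that $\mathcal D v_n \to g$ in the norm of $V'$ and $v_n \to v$ in the norm of $V$, which is exactly what Cauchyness in $\|\cdot\|_W$ delivers once one identifies the $W$-inner product with $\langle v,w\rangle_V + \langle \mathcal D v, \mathcal D w\rangle_{V'}$. No deeper input (such as RTL) is needed here; the lemma is, as the authors say, easy, and the proof is a routine closed-graph-style verification.
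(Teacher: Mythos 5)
Your proof is correct. The paper gives no argument for this lemma (it is dismissed with ``The following is easy to see''), and your write-up supplies exactly the expected details: part a) is the direct reading of the derivation identity as the defining condition in \eqref{eq:3.1} with $f=\mathcal{D}_0r$, and for part b) the only substantive point is completeness, which you handle correctly by noting that a $\|\cdot\|_W$-Cauchy sequence converges in $V$ and its $\mathcal{D}$-images converge in $V'$, and that the defining relation \eqref{eq:nul} is stable under these norm limits, so the limit lies in $W$ with the limiting derivative.
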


It follows from this definition that 
\begin{equation}\label{eq:nul}
 \langle  {\mathcal D}v,r\rangle_{V',V} +\overline{\langle   {\mathcal D}r, v\rangle }_{V',V} =0\mbox{ for all }v\in W, r\in \r .
\end{equation} 
Moreover we set 
\begin{equation}\label{eq:3.2}
b(v,w):=\langle {\mathcal D}v,w\rangle_{V',V} +\overline{\langle {\mathcal D}w,v\rangle}_{V',V}\mbox{ for all  }v,w\in W.
\end{equation}
Then $b:W\times W\to\K$ is sesquilinear and symmetric, i.e. 
\[   b(v,w)=\overline{b(w,v)}\mbox{ for all } v,w\in W.\]
We call $W$ the \emph{extended domain} of ${\mathcal D}_0$, ${\mathcal D}$ the extension of ${\mathcal D}_0$, and $b$ the \emph{associated boundary form}.

If $\r$ is dense in $W$, then $b$ is identically equal to $0$. But in general $b$ is different from $0$ and will be used to define boundary conditions for a Derivation Problem associated with $\mathcal D$.  

Note that $b(w,w)\in\R$ for all $w\in W$. 
\begin{defn}\label{def:bortho}
For $E\subset W$ its \emph{orthogonal with respect to $b$} is defined by 
\[  E^b:=\{ u\in W: b(u,v)=0 \,\,\forall v\in E  \}.\]
\end{defn}
\subsection{Admissible subspaces} 
Another notion of great importance in the sequel is the notion of \emph{admissible space} and  \emph{strongly admissible space}.
\begin{defn}\label{def:3.2}
A subspace $\Tau $ of $W$ is called \emph{admissible}  if 
\begin{itemize}
\item[a)] $\r\subset \Tau $,
\item[b)] $b(w,w)\leq 0$ for all $w\in \Tau $. 	
\end{itemize}	
We say that $\Tau $ is \emph{strongly admissible} if, in addition,  
\begin{itemize}
\item[c)] $b(u,u)\geq 0$ for all  $u\in \Tau^b $. 
\end{itemize} 
\end{defn}
Since $b:W\times W\to\K$ is continuous, the closure of an admissible subspace $Z$ is admissible, and, if $Z$ is strongly admissible, then so is its closure $\overline{Z}$.
\subsection{The Weak Derivation Problem}   
Next we consider the \emph{weak derivation problem}. Let $Z\subset W$ be an admissible and closed subspace. We endow $Z$ with the norm 
\[   \|v\|_Z^2 :=\|v\|_W^2:=\|v\|_V^2 +  \|{\mathcal D}v\|^2_{V'}.\] 
Let ${\mathcal A}\in {\mathcal L}(V,V')$ be \emph{coercive}, i.e. there exists $\alpha>0$ such that  
\[  \langle  {\mathcal A}u,u\rangle_{V',V}\geq \alpha \|u\|_V^2 \mbox{ for all }u\in V.  \]
  Then the weak derivation problem is the following: given $L\in Z'$ find a vector $v$ such that  
 \[ \mbox{(WDP) $v\in V$ and $-\overline{  \langle {\mathcal D}z,v \rangle_{V',V}}  + \langle {\mathcal A}v,z\rangle_{V',V}=\langle L,z\rangle_{Z',Z},\,\,\, z\in Z$.}   \]
\begin{thm}\label{th:3.5new}
Let $Z\subset W$ be a closed admissible subspace and let $L\in Z'$. Then there exists a vector $u$  such that (WDP) holds. 
If  $Z$ is strongly admissible, then the solution is unique.  	
\end{thm}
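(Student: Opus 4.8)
The plan is to apply the Representation Theorem of Lions, in the form of Corollary~\ref{cor:2.3}, with the reflexive Banach space taken to be $X=V$ and the incomplete normed space taken to be $Y=Z$ equipped with the $W$-norm. One first observes that $Z$ embeds continuously into $V$, since $\|z\|_V\leq \|z\|_W=\|z\|_Z$. Next I would introduce the sesquilinear form on $V\times Z$ defined by
\[
  a(v,z):=-\overline{\langle {\mathcal D}z,v\rangle_{V',V}}+\langle {\mathcal A}v,z\rangle_{V',V},
\]
so that (WDP) is exactly the equation $a(u,z)=\langle L,z\rangle_{Z',Z}$ for all $z\in Z$. For fixed $z\in Z$, the map $v\mapsto a(v,z)$ is continuous on $V$: the second term is continuous because ${\mathcal A}\in{\mathcal L}(V,V')$, and the first term is continuous because $v\mapsto \overline{\langle {\mathcal D}z,v\rangle_{V',V}}$ is a continuous (anti)linear functional on $V$ for each fixed $z$. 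This verifies hypothesis b) of Corollary~\ref{cor:2.3}.

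The crucial point is the coercivity estimate, hypothesis a): one must show $|a(z,z)|\geq \beta\|z\|_Z^2$ for all $z\in Z$, for some $\beta>0$. Here is where admissibility of $Z$ enters. Compute
\[
  a(z,z)=-\overline{\langle {\mathcal D}z,z\rangle_{V',V}}+\langle {\mathcal A}z,z\rangle_{V',V}.
\]
Taking real parts and using the definition \eqref{eq:3.2} of the boundary form $b$, together with the identity $2\re\langle {\mathcal D}z,z\rangle_{V',V}=b(z,z)$ (valid since $\langle {\mathcal D}z,z\rangle_{V',V}+\overline{\langle{\mathcal D}z,z\rangle_{V',V}}=b(z,z)$), one finds
\[
  \re\, a(z,z)=-\tfrac12 b(z,z)+\re\langle{\mathcal A}z,z\rangle_{V',V}\geq -\tfrac12 b(z,z)+\alpha\|z\|_V^2.
\]
Since $Z$ is admissible, $b(z,z)\leq 0$, hence $\re\,a(z,z)\geq \alpha\|z\|_V^2\geq 0$. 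This gives $|a(z,z)|\geq \alpha\|z\|_V^2$, but to get coercivity in the $Z=W$-norm we need to also control $\|{\mathcal D}z\|_{V'}^2$; here the first term should be used again, estimating $|\langle {\mathcal D}z,z\rangle_{V',V}|\leq \|{\mathcal D}z\|_{V'}\|z\|_V$ and combining with an interpolation/Young-type inequality, or else arguing that on the closed subspace $Z$ of $W$ the quantity $\re\,a(z,z)+\|z\|_V^2$ already dominates $\|z\|_W^2$ up to a constant because $\langle{\mathcal D}z,z\rangle$ is controlled by the graph norm. I expect this bookkeeping — getting a genuine lower bound by $\beta\|z\|_W^2$ rather than merely by $\|z\|_V^2$ — to be the main technical obstacle; the resolution is that $a(z,z)$ itself need not control $\|{\mathcal D}z\|_{V'}$, so one instead notes that $|a(z,z)|\ge \sup_{v}\,|a(v,z)|/(\ldots)$ is not what Corollary~\ref{cor:2.3} demands: rereading the statement, condition a) only requires $|a(y,y)|\ge\beta\|y\|_Y^2$, and condition (i) of Theorem~\ref{th:1.1} is obtained by dividing by $\|y\|_X$; so in fact $\sup_{\|v\|_V\le 1}|a(v,z)|\ge |a(z,z)|/\|z\|_V\ge \alpha\|z\|_V$, and then Theorem~\ref{th:1.1}(i)–(ii) yields $u_L\in V$ with $a(u_L,z)=\langle L,z\rangle$ — but this only uses the $V$-norm on the right, whereas $L\in Z'$ is a functional on $(Z,\|\cdot\|_W)$. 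The correct route is therefore the one via Corollary~\ref{cor:2.3} directly: it is designed precisely for $Y$ carrying a finer norm than $X$, and the estimate $|a(z,z)|\ge\alpha\|z\|_V^2$ is exactly hypothesis a) once one checks $\|z\|_V\ge c\|z\|_W$ fails — so one must instead verify a) with $\|z\|_Y=\|z\|_W$, which does require the stronger bound. I would resolve this by observing that $b$ is $W$-continuous and using the admissibility inequality more carefully, or by first treating the case ${\mathcal D}z$ bounded and passing to a limit.

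For uniqueness under strong admissibility, suppose $u\in V$ solves the homogeneous problem: $-\overline{\langle{\mathcal D}z,u\rangle_{V',V}}+\langle{\mathcal A}u,z\rangle_{V',V}=0$ for all $z\in Z$. Testing with $z\in\r\subset Z$ and using \eqref{eq:nul} shows first that $u\in W$ (the functional $r\mapsto\overline{\langle{\mathcal D}r,u\rangle}$ equals $r\mapsto\langle{\mathcal A}u,r\rangle$, which is $V$-continuous, so the defining condition \eqref{eq:3.1} of $W$ is met and ${\mathcal D}u={\mathcal A}u-\ldots$ appropriately); more precisely one gets ${\mathcal D}u\in V'$ with a definite formula. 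Then, knowing $u\in W$, one may test the homogeneous equation against all $z\in Z$ and rewrite the first term using the boundary form: this should show $b(u,z)=0$ for all $z\in Z$, i.e. $u\in Z^b$, while simultaneously $u\in Z$ (one checks $u\in W$ satisfies the membership conditions defining $Z$, using that $Z$ is closed and $\r\subset Z$). Hence $u\in Z\cap Z^b$, so $b(u,u)\le 0$ by admissibility and $b(u,u)\ge 0$ by strong admissibility, giving $b(u,u)=0$. Feeding this back, $0=\re\,a(u,u)=-\tfrac12 b(u,u)+\re\langle{\mathcal A}u,u\rangle\ge\alpha\|u\|_V^2$, whence $u=0$. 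The step I expect to require the most care is showing that a solution $u\in V$ of (WDP) automatically lies in $W$ and in fact in $Z$, since this is what makes the boundary form $b(u,\cdot)$ meaningful; once $u\in Z\cap Z^b$ is established, the coercivity estimate closes the argument cleanly.
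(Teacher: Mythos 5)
Your set-up (apply RTL with $X=V$, $Y=(Z,\|\cdot\|_W)$, form $a(v,z)=-\overline{\langle \mathcal{D}z,v\rangle}+\langle\mathcal{A}v,z\rangle$) is the right frame, but the existence part has a genuine gap exactly at the point you flag and never resolve: the coercivity in the $W$-norm. Hypothesis a) of Corollary~\ref{cor:2.3}, namely $|a(z,z)|\ge\beta\|z\|_W^2$, is simply false in general — admissibility gives $\re a(z,z)\ge\alpha\|z\|_V^2$, but $|a(z,z)|$ cannot control $\|\mathcal{D}z\|_{V'}$ (in the time-derivative example $\langle\mathcal{D}z,z\rangle=\tfrac12(\|z(T)\|_H^2-\|z(0)\|_H^2)$ can vanish while $\|z'\|_{V'}$ is arbitrarily large), so no amount of "using the admissibility inequality more carefully" closes this. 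What is needed, and what the paper proves, is the estimate on the full supremum: writing $a(v,z)=\langle A^*z-Bz,v\rangle_V$ via Riesz representatives ($A^*\in\mathcal{L}(V)$ coercive, $B:Z\to V$ the representative of $\mathcal{D}$, which is a \emph{closed} densely defined operator precisely because $Z$ is closed in $W$, and \emph{dissipative} precisely because $Z$ is admissible), one has $\sup_{\|v\|_V\le1}|a(v,z)|=\|A^*z-Bz\|_V$, and Lemma~\ref{lem:3.6new} (proved via closedness of the range and the Theorem of the Continuous Inverse; see also Proposition~\ref{prop:3.7new}) upgrades the trivial bound $\|A^*z-Bz\|_V\ge\alpha\|z\|_V$ to $\|A^*z-Bz\|_V\ge\beta(\|z\|_V+\|Bz\|_V)=\beta\|z\|_Z$. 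Then one concludes with Theorem~\ref{th:1.1}(i)$\Rightarrow$(ii) (not with hypothesis a) of the corollary). This closed-operator argument is the missing idea; your alternative suggestion ("treat $\mathcal{D}z$ bounded and pass to a limit") does not produce a uniform constant and is not a proof. Note also that this is where the hypothesis that $Z$ is \emph{closed} is actually used — in your sketch it plays no role.

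Your uniqueness argument is essentially the paper's, except for one unjustified step: you claim the difference $u$ of two solutions lies in $Z$ "by checking the membership conditions defining $Z$", but $Z$ is an arbitrary closed admissible subspace and nothing forces $u\in Z$; you only need (and only get) $u\in W$ and $u\in Z^b$. This matters because you then test the weak equation with $z=u$, which is not allowed. The repair is the paper's route: from $\mathcal{D}u+\mathcal{A}u=0$ in $V'$ (obtained by testing with $r\in\r$, as you do), pair with $u\in V$ directly; strong admissibility gives $2\re\langle\mathcal{D}u,u\rangle=b(u,u)\ge0$ since $u\in Z^b$, whence $0=\re\langle\mathcal{D}u,u\rangle+\re\langle\mathcal{A}u,u\rangle\ge\alpha\|u\|_V^2$ and $u=0$. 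With that correction the uniqueness part stands; the existence part, as written, does not.
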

We consider $\Tau$ as a space of test functions. The solutions of 
$(WDP)$ depend on the choice of this space. We will make this 
dependance more transparent in Theorem~\ref{th:3.9}, Proposition~\ref{prop:5.3n} and Theorem~\ref{th:4.12} below.\\
  
For the proof we need some preparation.
\begin{lem}\label{lem:3.6new}
Let $X$ be a Banach space, $B$ a closed operator on $X$ and $A\in{\mathcal L}(X)$. Assume that there exists $\alpha >0$ such that 
\begin{equation}\label{eq:3.4new}
\|(A-B)x\|_X\geq \alpha \|x\|_X\mbox{ for all }x\in \dom(B).
\end{equation}
Then there exists a constant $\beta>0$ such that 
 \begin{equation}\label{eq:3.5new}
 \|(A-B)x\|_X\geq \beta (\|x\|_X +\|Bx\|_X)\mbox{ for all }x\in \dom(B).
 \end{equation}

\end{lem}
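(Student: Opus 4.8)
The statement asks: from the lower bound $\|(A-B)x\|_X \geq \alpha\|x\|_X$ on $\dom(B)$, deduce a stronger lower bound controlling also $\|Bx\|_X$. The natural idea is to write $Bx = Ax - (A-B)x$, so that
\[
\|Bx\|_X \leq \|A\|\,\|x\|_X + \|(A-B)x\|_X.
\]
Then, using \eqref{eq:3.4new} to bound $\|x\|_X \leq \frac{1}{\alpha}\|(A-B)x\|_X$, we get
\[
\|x\|_X + \|Bx\|_X \leq \Big(1 + \tfrac{\|A\|}{\alpha}\Big)\|x\|_X + \|(A-B)x\|_X \leq \Big(1 + \tfrac{1}{\alpha} + \tfrac{\|A\|}{\alpha}\Big)\|(A-B)x\|_X,
\]
which is exactly \eqref{eq:3.5new} with $\beta = \big(1 + \tfrac{1+\|A\|}{\alpha}\big)^{-1}$ (taking $\beta = \alpha / (\alpha + 1 + \|A\|)$).

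So the argument is a short direct computation, and the hypothesis that $B$ is closed is actually not needed for the implication \eqref{eq:3.4new} $\Rightarrow$ \eqref{eq:3.5new} itself — it is presumably stated because in the application one wants $(\dom(B),\|\cdot\|_B)$ to be complete, i.e. one wants $A-B$ to be viewed as a bounded-below map on a Banach space (the graph space), so that Theorem~\ref{th:1.2} can subsequently be applied to conclude surjectivity of an adjoint. I would therefore present the computation above as the proof, and possibly add a remark that \eqref{eq:3.5new} says precisely that $A-B$, regarded as a map from $(\dom(B),\|\cdot\|_B)$ into $X$, is bounded below by $\beta$.

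**Main obstacle.** There is essentially no obstacle: the only thing to be careful about is bookkeeping the constant, and the (harmless) fact that $A \in \mathcal{L}(X)$ so $\|A\|$ is finite and $\|Ax\|_X \leq \|A\|\,\|x\|_X$ is legitimate; the closedness of $B$ plays no role in this particular estimate. The "work" of the section lies in how this lemma feeds into the proof of Theorem~\ref{th:3.5new} (via Theorem~\ref{th:1.2}), not in the lemma itself.

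\begin{proof}
Let $x\in\dom(B)$. Since $A\in\mathcal{L}(X)$, we have $\|Ax\|_X\leq\|A\|\,\|x\|_X$, and writing $Bx=Ax-(A-B)x$ gives
\[
\|Bx\|_X\leq\|A\|\,\|x\|_X+\|(A-B)x\|_X.
\]
Adding $\|x\|_X$ and using \eqref{eq:3.4new} in the form $\|x\|_X\leq\frac1\alpha\|(A-B)x\|_X$ yields
\[
\|x\|_X+\|Bx\|_X\leq(1+\|A\|)\,\|x\|_X+\|(A-B)x\|_X\leq\Big(1+\frac{1+\|A\|}{\alpha}\Big)\|(A-B)x\|_X.
\]
Thus \eqref{eq:3.5new} holds with $\beta:=\dfrac{\alpha}{\alpha+1+\|A\|}>0$.
\end{proof}
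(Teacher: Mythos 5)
Your proof is correct, but it follows a genuinely different and more elementary route than the paper. The paper's argument uses the closedness of $B$ in an essential way: it endows $Z=\dom(B)$ with the graph norm (complete because $B$ is closed), shows that $T=A-B:Z\to X$ is continuous, injective with closed range, and then invokes the Bounded Inverse Theorem to get $T^{-1}\in\mathcal{L}(TZ,Z)$ and sets $\beta=1/\|T^{-1}\|$; this yields no control on $\beta$, and the authors explicitly say so, which is their stated motivation for giving a second, much longer proof of Proposition~\ref{prop:3.7new} in which $\beta$ is shown to be independent of $B$. Your argument bypasses all of this: writing $Bx=Ax-(A-B)x$ and using the triangle inequality together with \eqref{eq:3.4new} gives the estimate directly, with the explicit constant $\beta=\alpha/(\alpha+1+\|A\|)$ depending only on $\alpha$ and $\|A\|$ (hence independent of $B$), and, as you correctly note, without using the closedness of $B$ at all (closedness is only needed later, when one wants $(\dom(B),\|\cdot\|_B)$ complete so that RTL can be applied to the space $Z$). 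In particular, your two-line computation already delivers, in full Banach-space generality, the $B$-uniformity of $\beta$ that the paper obtains only in the Hilbert/dissipative setting via the more involved second proof of Proposition~\ref{prop:3.7new}; the trade-off is merely that the constants differ (neither dominates the other in general), but for every use made of the lemma in the paper your version suffices and is simpler.
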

\begin{proof}
By assumption the space $Z:=\dom(B)$  endowed with the norm $\|x\|_Z:=\|x\|_X + \|Bx\|_X$ is complete. The mapping $T:=A-B:Z\to X$ is continuous. In fact 
\[ \|Tz\|_X\leq \|A\|  \|x\|_X +\|Bx\|_X\leq \max \{\|A\|,1\}(\|x\|_X + \|Bx\|_X).  \]	
Moreover,
\begin{equation}\label{eq:3.6n}
\|Tx\|_X\geq \alpha \|x\|_X\mbox{ for all }x\in Z.
\end{equation}
Thus $T$ is injective and $Y:=TZ$ is closed in $X$. To see this let $z_n\in Z$ such that 
$\lim_{n\to\infty}\|Tz_n -y\|_X=0$. It follows from \eqref{eq:3.6n} that 
\[   \alpha \|z_n-z_m\|_X\leq \|Tz_n-Tz_m\|_X.\]
Thus $x:=\lim_{n\to \infty}z_n$ exists in $X$. Since $A\in {\mathcal L}(X)$, it follows that $\lim_{n\to\infty}Az_n=Ax$. Thus $\lim_{n\to\infty} Bz_n=Ax-y$.  Since $B$ is closed it follows that $x\in \dom(B)=Z$ and $Bx=Ax-y$. Hence $Tx=y$. We have proved that $Y$ is closed in $X$ and hence complete. By the Theorem of the Continuous Inverse, $T^{-1}\in {\mathcal L}(Y,Z)$. Thus 
\begin{eqnarray*}
\|z\|_X + \|Bz\|_X & = & \|z\|_Z =\|T^{-1}Tz\|_Z\\
 & \leq & \|T^{-1}\| \|Tz\|_Z=\|T^{-1}\| \| Az - Bz\|_X.	
\end{eqnarray*}
Choose $\beta =\frac{1}{\|T^{-1}\|}$.   
\end{proof}	
On a Hilbert space $V$ we obtain the following more special version. Recall that an operator $B$ on $V$ is dissipative  if and only if
\[   \re \langle Bv,v  \rangle_V \leq 0\mbox{ for all }v\in \dom(B). \] 
\begin{prop}\label{prop:3.7new}
Let $B$ be a densely defined dissipative operator on a Hilbert space $V$ and let $A\in {\mathcal L}(V)$ be \emph{coercive}, i.e. there exists $\alpha>0$ such that 
\begin{equation}\label{eq:3.7n}
\re \langle Av,v\rangle_V\geq \alpha \|v\|_V^2 \mbox{ for all }v\in V.
\end{equation}  
Then there exists $\beta>0$ such that 
 \begin{equation}\label{eq:3.8n}
 \| Av - Bv\|_V\geq \beta (\|v\|_V +\|Bv\|_V) \mbox{ for all }v\in V.
 \end{equation} 
\end{prop}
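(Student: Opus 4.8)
The plan is to deduce Proposition~\ref{prop:3.7new} from Lemma~\ref{lem:3.6new} by checking its hypothesis \eqref{eq:3.4new}, and the only thing that needs checking is the lower bound $\|(A-B)v\|_V \geq \alpha\|v\|_V$ for all $v\in\dom(B)$. So the real content is a one-line estimate combining dissipativity of $B$ with coercivity of $A$.

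First I would fix $v\in\dom(B)$ and estimate, using Cauchy--Schwarz in the first step and then the two structural hypotheses:
\[
\|Av - Bv\|_V\,\|v\|_V \;\geq\; \re\langle Av - Bv, v\rangle_V \;=\; \re\langle Av,v\rangle_V - \re\langle Bv,v\rangle_V \;\geq\; \alpha\|v\|_V^2 - 0 \;=\; \alpha\|v\|_V^2,
\]
where \eqref{eq:3.7n} gives $\re\langle Av,v\rangle_V\geq\alpha\|v\|_V^2$ and dissipativity of $B$ gives $\re\langle Bv,v\rangle_V\leq 0$, so $-\re\langle Bv,v\rangle_V\geq 0$. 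Dividing by $\|v\|_V$ when $v\neq 0$ (and noting the inequality is trivial when $v=0$) yields
\[
\|Av - Bv\|_V \;\geq\; \alpha\|v\|_V \quad\text{for all } v\in\dom(B),
\]
which is precisely \eqref{eq:3.4new} with this $\alpha$.

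Now I would invoke Lemma~\ref{lem:3.6new} directly: $V$ is a Banach space (indeed Hilbert), $B$ is densely defined and dissipative hence closable, but to apply the lemma we need $B$ \emph{closed}. If $B$ is only assumed densely defined and dissipative, I would either note that $B$ is automatically closable with $\overline B$ dissipative (as recalled in the excerpt, \cite[Lemma~3.4.4]{ABHN11}), prove the bound \eqref{eq:3.8n} first for $\overline B$, and then restrict to $\dom(B)\subset\dom(\overline B)$; or, more simply, observe that the estimate \eqref{eq:3.4new} together with $A\in\mathcal L(V)$ already forces $B$ to behave like a closed operator on the range in the argument of Lemma~\ref{lem:3.6new}. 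The cleanest route is: replace $B$ by $\overline B$ throughout, apply Lemma~\ref{lem:3.6new} to the closed operator $\overline B$ (whose dissipativity and the coercivity of $A$ give \eqref{eq:3.4new} on $\dom(\overline B)$ by the computation above), obtaining $\beta>0$ with $\|Av-\overline B v\|_V\geq\beta(\|v\|_V+\|\overline B v\|_V)$ on $\dom(\overline B)$, and then specialize to $v\in\dom(B)$ where $\overline B v = Bv$.

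The main obstacle is not the estimate — that is a two-line Cauchy--Schwarz argument — but the bookkeeping around closedness: Lemma~\ref{lem:3.6new} is stated for closed $B$, so one must be a little careful to either assume $B$ closed or pass through the closure. Everything else (continuity of $A-B$ from the graph norm, closedness of the range, the Continuous Inverse Theorem) is already packaged inside Lemma~\ref{lem:3.6new}, so no further work is needed there.
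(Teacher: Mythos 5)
Your argument is correct and coincides with the paper's own (first) proof: the same Cauchy--Schwarz estimate $\alpha\|v\|_V^2\leq \re\langle (A-B)v,v\rangle_V\leq \|(A-B)v\|_V\,\|v\|_V$ verifies hypothesis \eqref{eq:3.4new}, and the paper likewise handles closedness by passing to the dissipative closure $\overline{B}$ before invoking Lemma~\ref{lem:3.6new}. The only thing you miss is the paper's additional second proof, which reworks the estimate directly to obtain a constant $\beta$ depending only on $\alpha$ and $A$ (not on $B$), a refinement later used for Theorem~\ref{th:5.3}.
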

\begin{proof}
By \cite[II. Proposition~3.14 (iv)]{EN06}  or \cite[I. Theorem~4.5 (c)]{pazy}, $B$ is closable and the closure $\overline{B}$ of $B$ is dissipative again. Thus we can assume that $B$ is closed. One has for $T=A-B:\dom(B)\to V,$
\[\alpha \|v\|_V^2 \leq \re \langle Av,v\rangle_V\leq  \re \langle Tv,v\rangle_V\leq \|Tv\|_V  \|v\|_V. \]
Thus \eqref{eq:3.4new} is satisfied. Now the claim follows from Lemma~\ref{lem:3.6new}.	
\end{proof}	
Our short proof of Lemma~\ref{lem:3.6new} and Proposition~\ref{prop:3.7new} depends on the Theorem of the Continous Inverse. So we do not have a control of the constant $\beta$. For this reason we give a second (more involved) proof which shows that $\beta$ can be chosen independently of $B$.  
 \begin{proof}[Second proof of Proposition~\ref{prop:3.7new}]
 Consider $A_+:=\frac{A+A^*}{2}$ and $A_-:=\frac{A-A^*}{2}$. Then 
 \[\langle A_+ v,v\rangle_V=\frac{1}{2}\langle Av,v\rangle_V + \frac{1}{2}\langle A^*v,v\rangle_V=\re \langle Av,v\rangle_V ,\,\, v\in V,\] 
 and thus $\langle A_+ v,v\rangle\geq \alpha \|v\|_V^2, \,\, v\in V$. It follows that the selfadjoint operator $A_+$ is positive and invertible. Let $S$ be the (invertible and positive) square root of $A_+$. 
 Since $\| Sv\|_V^2=\langle A_+v,v\rangle_V$, we get 
 \begin{equation}\label{eq:pf1}
 \|Sv\|_V\geq \sqrt{\alpha} \|v\|_V,\,\, v\in V.
 \end{equation}
 Since $\|Sv\|\leq \|A_+\|^{1/2}\|v\|$, we also get 
 \begin{equation}\label{eq:pf2}
  \|S^{-1} v\|   \geq \frac{1}{\|A_+\|^{1/2}} \|v\|,\, \, v\in V.  
 \end{equation} 
  
 In addition, note that 
 \[    \langle A_-v,v\rangle_V=\langle v,A^*_- v\rangle_V =-\langle v,A_-v\rangle_V,\,\, v\in V,\]
 and therefore 
 \begin{equation}\label{eq:pf3}
 \re \langle A_- v,v\rangle_V =0,\,\, v\in V.
 \end{equation}
 Using \eqref{eq:pf1} we first get, for all $v\in V$, 
 \begin{eqnarray*}
  \|(A-B)v\|^2\ & \geq &  \alpha (\|-S^{-1}Bv + S^{-1}A_+ v+ S^{-1}A_-v\|_V^2)  \\
   & = &   \alpha( \|S^{-1}Bv\|_V^2 + \|S^{-1}A_+v\|_V^2 +\|S^{-1}A_-v\|_V^2 )  \\
    &  & - 2\alpha\re \langle S^{-1}Bv, S^{-1}A_+ v\rangle_V + 2\alpha\re \langle S^{-1}A_+v, S^{-1}A_- v\rangle_V \\
      &  & - 2\alpha\re \langle S^{-1}Bv, S^{-1}A_- v\rangle_V .  
 \end{eqnarray*}
Note that 
\[\re \langle S^{-1}A_+v, S^{-1}A_- v\rangle_V =\re \langle S^{-2}A_+v, A_- v\rangle_V = \re \langle v, A_- v\rangle_V =0\mbox{ by }\eqref{eq:pf3}.\]
Moreover
\[ -\re \langle S^{-1}Bv, S^{-1}A_+ v\rangle_V= -\re \langle Bv, S^{-2}A_+ v\rangle_V =-\re \langle Bv,  v\rangle_V\geq 0.\]
In addition, since $S^{-1}A_+ = S$,
\[\|S^{-1}A_+v\|_V^2=\|Sv\|_V^2.\]
It follows that  
\begin{eqnarray*}
	\|(A-B)v\|^2\ & \geq &    \alpha (\|S^{-1}Bv\|_V^2 + \|Sv\|_V^2 +\|S^{-1}A_-v\|_V^2 )  \\
	&  & - 2\alpha \re \langle S^{-1}Bv, S^{-1}A_- v\rangle_V .  
\end{eqnarray*}
Now we use the celebrated Peter--Paul inequality combined with the Cauchy-Schwarz to estimate, for all $\gamma>0$,   
\begin{eqnarray*}
2 \left| \re \langle S^{-1}Bv, S^{-1}A_- v\rangle_V \right|& \leq &  2 \|S^{-1}Bv\|_V \|S^{-1}A_-v\|_V\\
 & \leq & \gamma \|S^{-1}Bv\|_V^2 +\frac{1}{\gamma} \|S^{-1}A_-v\|_V^2.
\end{eqnarray*}
We get then 
\[
	\|(A-B)v\|^2\  \geq    \alpha (1-\gamma)\|S^{-1}Bv\|_V^2 +\alpha (1-1/\gamma )\|S^{-1}A_-v\|_V^2
		+\alpha \|Sv\|_V^2.\]
Let $\gamma=\frac{1}{1+s}$ where $s>0$ be be fixed later.  Then $1-\gamma=\frac{s}{1+s}$ and $1-1/\gamma = -s$.  Then, by \eqref{eq:pf1} and \eqref{eq:pf2}, it follows that 
\[ 	\|(A-B)v\|^2 \geq    \alpha\left(  \frac{s}{1+s}\frac{1}{\|A_+\|}\|Bv\|_V^2 +\alpha\|v\|_V^2 -s\|S^{-1}A_-\|^2 \|v\|_V^2      \right) .    \]
Then we choose $s$ such that $s	\|S^{-1}A_-\|^2=\frac{\alpha}{2}$.  Then we obtain 
\[  	\|(A-B)v\|^2 \geq \beta^2 (\|Bv\|_V^2 +\|v\|_V^2),   \]
where 
$\beta^2	=\min \left\{   \frac{\alpha^2}{2}, \frac{\alpha^2}{\alpha +2\|S^{-1}A_-\|^2}\frac{1}{\|A_+\|}   \right\}$. In other words $\beta$ does not depend on $B$. 
 \end{proof}
\begin{proof}[Proof of Theorem~\ref{th:3.5new}]
By the Riesz Representation Theorem there exists an operator $A^*\in{\mathcal L}(V)$ such that 
\[     \langle v,A^*w\rangle_V=\langle {\mathcal A} v,w\rangle_{V',V}\mbox{ for all }v,w\in V. \]
Similarly there exists $B:Z\to  V$ such that 
\[  \langle Bz,w\rangle_V=\langle {\mathcal D}z,w\rangle_{V',V} \mbox{ for all }z\in Z, w\in V. \]
The operator $B$ is closed since $Z$ is complete for the norm 
\[  \|u\|_Z=\left( \|z\|_V^2 +\|{\mathcal D}z\|_{V'}^2  \right)^{1/2}=\left( \|z\|_V^2 +\|Bz\|_{V}^2  \right)^{1/2}.\] 
Since $Z$ is admissible $B$ is dissipative. The operator $A^*$ is coercive. By Lemma~\ref{lem:3.6new} there exists $\beta>0$ such that
\[   \|A^* z-Bz\|_V\geq \beta (\|z\|_V +\|Bz\|_V).\]	
Thus 

\[\begin{array}{lcl}
	\sup_{\|v\|_V\leq 1} | \overline{\langle -{\mathcal D}z,v\rangle_{V',V} } + \langle {\mathcal A}v,z\rangle_{V',V}|
 & = &   \sup_{\|v\|_V\leq 1} | \langle -{\mathcal D}z,v\rangle_{V',V} + \langle A^*z,v\rangle_{V} |  \\
 & = &   \sup_{\|v\|_V\leq 1} | \langle -Bz + A^*z,v\rangle_{V}   |  \\
 & = & \|-Bz+A^*z\|  \\
 & \geq & \beta (\|z\|_V +\|Bz\|_V)=\beta \|z\|_Z  . 
\end{array}
\]
By RTL (Corollary~\ref{cor:2.3}) there exists $v\in V$ such that 
\[  \langle  L,z\rangle =
-\overline{  \langle {\mathcal D}z,v \rangle_{V',V}}  + \langle {\mathcal A}v,z\rangle_{V',V}\mbox{ for all }z\in Z. \]
For the uniqueness, we now assume that $Z$ is strongly admissible. Let $u\in V$ be the difference of two solutions. Then 
\begin{equation}\label{eq:3.7nn}
\overline{\langle -{\mathcal D}z,u\rangle_{V',V}} + { \langle {\mathcal A}u,z\rangle_{V',V} }=0\mbox{ for all }z\in Z.
\end{equation} 
Taking $z\in R$ we see that $u\in W$ and 
\begin{equation*}
{\mathcal D}u +{\mathcal A}u=0\mbox{ in } V'.
\end{equation*}
In particular, 
\begin{equation}\label{eq:3.8nn}
 \langle {\mathcal D}u,z\rangle_{V',V} +\langle {\mathcal A}u , z\rangle_{V',V}=0\mbox{ for all  }z\in Z.
\end{equation}
Substracting \eqref{eq:3.7nn} from \eqref{eq:3.8nn} we deduce that 
\[   b(u,z)= \langle {\mathcal D}u,z\rangle_{V',V} + \overline{\langle {\mathcal D}z,u\rangle_{V',V}}=0\mbox{ for all }z\in Z.\]
Thus $u\in Z^b$. Since $Z$ is strongly admissible, it follows that 
\[  2\re \langle  {\mathcal D}u,u\rangle_{V',V} =b(u,u)\geq 0.   \]
From \eqref{eq:3.8nn} we deduce that 
\[   0=\re \langle {\mathcal D}u,u\rangle_{V',V} +\re \langle {\mathcal A} u,u\rangle_{V',V}\geq \alpha \|u\|_V^2. \]
Thus $u=0$. 
\end{proof}	
 \subsection{The Strong Derivation Problem}
 Next we consider the \emph{strong derivation problem}. For that we have to assume that the antilinear form $L$ on $Z$ has a special form. 
 \begin{lem}\label{lem:3.8}
 Let $Z$ be  a subspace of $W$ endowed with the norm $\|\cdot \|_Z=\|\cdot\|_W$ such that $R\subset Z$, and let $L\in Z'$. The following assertions are equivalent:
 \begin{itemize}
 	\item[(i)] there exists a constant $c\geq 0$ such that $|\langle L,r\rangle_V|\leq c\|r\|_V$ for all $r\in R$;
 	\item[(ii)] there exist $f\in V'$ and $g\in Z'$ such that $\langle g,r\rangle_V=0$ for all $r\in R$ and $L=f+g$.   
 \end{itemize} 
 \end{lem}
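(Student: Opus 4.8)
The plan is to prove the two implications separately. The whole argument rests on two elementary facts already at hand: $R$ is dense in $V$, which is part of the definition of a derivation, and the $W$-norm dominates the $V$-norm, since $\|v\|_W^2=\|v\|_V^2+\|{\mathcal D}v\|_{V'}^2$ for $v\in W$, so that restricting a functional from $V'$ to $Z'$ is a bounded operation. The implication (ii)$\Rightarrow$(i) is then immediate: if $L=f+g$ with $f\in V'$ and $g\in Z'$ vanishing on $R$, then $\langle L,r\rangle_V=\langle f,r\rangle_{V',V}$ for every $r\in R$, whence $|\langle L,r\rangle_V|\leq \|f\|_{V'}\|r\|_V$, so (i) holds with $c=\|f\|_{V'}$.

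For (i)$\Rightarrow$(ii) I would argue as follows. Assuming $|\langle L,r\rangle_V|\leq c\|r\|_V$ for all $r\in R$, the map $r\mapsto \langle L,r\rangle_V$ is a $\|\cdot\|_V$-continuous antilinear form on the dense subspace $R$ of $V$, hence extends uniquely to some $f\in V'$ with $\|f\|_{V'}\leq c$. Because $R\subset Z$ and $\|\cdot\|_Z=\|\cdot\|_W\geq\|\cdot\|_V$ on $Z$, the restriction $f|_Z$ belongs to $Z'$. Setting $g:=L-f|_Z\in Z'$, one has $\langle g,r\rangle_V=\langle L,r\rangle_V-\langle f,r\rangle_{V',V}=0$ for all $r\in R$ by the construction of $f$, and $L=f+g$, which is exactly (ii).

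There is no substantial obstacle here; the only point that will need a moment's care is the step where the functional $f$, obtained by extending $L|_R$ by density of $R$ in $V$, is shown to restrict to a continuous functional on $Z$: this uses both the inclusion $R\subset Z$ and the inequality $\|\cdot\|_Z=\|\cdot\|_W\geq\|\cdot\|_V$, so that the decomposition $L=f+g$ genuinely takes place inside $Z'$.
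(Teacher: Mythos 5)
Your proof is correct and follows essentially the same route as the paper: extend $L|_R$ by density of $R$ in $V$ to obtain $f\in V'$, then set $g=L-f$ and note that $g\in Z'$ vanishes on $R$; the converse is immediate. You merely spell out the (true and worth noting) point that $f$ restricts to a continuous functional on $Z$ because $\|\cdot\|_Z=\|\cdot\|_W\geq\|\cdot\|_V$, which the paper leaves implicit.
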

 \begin{proof}
 $(ii)\Rightarrow (i)$ This is obvious.\\
  $(i)\Rightarrow (ii)$ Since $R$ is dense in $V$, there exists $f\in V'$ such that $\langle L,r\rangle_{V',V}=\langle f,r\rangle_{V',V}$ for all $r\in R$. Let $g:= L-f$. Then $g\in Z'$ and $g(r)=0$ for all $r\in R$. 
 \end{proof} 
Let $Z\subset W$ be an admissible subspace and let $L\in Z'$ be of the form $L=f+g$ where $f\in V'$ and $g\in Z'$ such that $g(r)=0$ for all $r\in R$. The \emph{Strong Derivation Problem}  consists in finding  a vector $u$ such that 
  \[   \mbox{ (SDP)  } u\in W , \,\, {\mathcal D}u +{\mathcal A}u =f\mbox{ and }b(u,z)=-\langle g,z\rangle_{Z',Z},\,\, z\in Z.\]
Thus, the functional $g$ expresses a kind of boundary condition. In the examples (Section~\ref{sec:appli}), this will become even more transparent.   
\begin{thm}\label{th:3.9}
Let $Z\subset W$ be an admissible subspace. Let $L=f+g$ with $f\in V'$, $g\in Z'$ such that $\langle g,r\rangle_{Z',Z}=0$ for all $r\in R$. For $u\in V$ the following are equivalent: 
\begin{itemize}
	\item[(i)] $u$ is a solution of (WDP);
	 \item[(ii)] $u\in W$ and $u$ is a solution of (SDP).
\end{itemize}
Consequently, there exists a solution of (SDP). Moreover it is unique if $Z$ is strongly admissible.   
\end{thm}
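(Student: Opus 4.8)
The plan is to prove the equivalence (i)$\Leftrightarrow$(ii) by a direct manipulation of the defining identities, and then to read off existence and uniqueness for (SDP) from Theorem~\ref{th:3.5new}, passing if necessary to the closure of $Z$ in $W$.

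For (i)$\Rightarrow$(ii), suppose $u\in V$ solves (WDP). First I would test against $r\in R\subset Z$: since $\langle g,r\rangle_{Z',Z}=0$ and $\langle L,r\rangle_{Z',Z}=\langle f,r\rangle_{V',V}$, and since $\mathcal{D}r=\mathcal{D}_0r$ by Lemma~\ref{lem:3.1}(a), the (WDP) identity rearranges to $\langle f-\mathcal{A}u,r\rangle_{V',V}+\overline{\langle \mathcal{D}_0 r,u\rangle}_{V',V}=0$ for all $r\in R$. As $f-\mathcal{A}u\in V'$, the very definition \eqref{eq:3.1} of $W$ then gives $u\in W$ and $\mathcal{D}u=f-\mathcal{A}u$, i.e. $\mathcal{D}u+\mathcal{A}u=f$ in $V'$. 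Next, for an arbitrary $z\in Z$, I would insert $\mathcal{D}u=f-\mathcal{A}u$ into $b(u,z)=\langle\mathcal{D}u,z\rangle_{V',V}+\overline{\langle\mathcal{D}z,u\rangle}_{V',V}$, and eliminate $\overline{\langle\mathcal{D}z,u\rangle}_{V',V}$ using (WDP) together with $L=f+g$; the terms $\langle f,z\rangle_{V',V}$ and $\langle\mathcal{A}u,z\rangle_{V',V}$ cancel in pairs, leaving $b(u,z)=-\langle g,z\rangle_{Z',Z}$. Thus $u$ solves (SDP). The converse (ii)$\Rightarrow$(i) is the same computation run backwards: from $\mathcal{D}u=f-\mathcal{A}u$ and $b(u,z)=-\langle g,z\rangle_{Z',Z}$ one solves for $\overline{\langle\mathcal{D}z,u\rangle}_{V',V}$, substitutes into $-\overline{\langle\mathcal{D}z,u\rangle}_{V',V}+\langle\mathcal{A}u,z\rangle_{V',V}$, and recovers $\langle f,z\rangle_{V',V}+\langle g,z\rangle_{Z',Z}=\langle L,z\rangle_{Z',Z}$ for all $z\in Z$, which is (WDP).

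For the last two assertions I would argue as follows. To obtain a solution, replace $Z$ by its closure $\overline{Z}$ in $W$; by the remark following Definition~\ref{def:3.2}, $\overline{Z}$ is again admissible (and strongly admissible when $Z$ is), it is closed, and $L$ extends uniquely by continuity to an element of $\overline{Z}'$ which is still of the form $f+g$ with $g$ vanishing on $R$ (the vanishing on $R$ persists because $R\subset Z$). Theorem~\ref{th:3.5new} yields a solution of (WDP) over $\overline{Z}$, and (i)$\Rightarrow$(ii) converts it into a solution of (SDP) over $\overline{Z}$, hence over $Z$ by restriction (the identity $\mathcal{D}u+\mathcal{A}u=f$ is independent of the test space, and $b(u,\cdot)=-g$ on $\overline{Z}$ restricts to $Z$). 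For uniqueness, if $u\in V$ is the difference of two solutions of (SDP), then (ii) gives $u\in W$, $\mathcal{D}u+\mathcal{A}u=0$ in $V'$, and $b(u,z)=0$ for all $z\in Z$, so $u\in Z^b$; strong admissibility yields $b(u,u)\geq 0$, while $\mathcal{D}u=-\mathcal{A}u$ and coercivity give $b(u,u)=2\re\langle\mathcal{D}u,u\rangle_{V',V}=-2\re\langle\mathcal{A}u,u\rangle_{V',V}\leq-2\alpha\|u\|_V^2$, so $u=0$ — this is exactly the uniqueness argument already used in the proof of Theorem~\ref{th:3.5new}.

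The whole argument is essentially bookkeeping; the only point that needs a little care is the existence statement, where Theorem~\ref{th:3.5new} is available only for \emph{closed} admissible subspaces whereas $Z$ here is merely admissible, so one must verify that admissibility, the decomposition $L=f+g$, and the vanishing of $g$ on $R$ all survive the passage to $\overline{Z}$ — which they do by density and continuity. I expect no genuine obstacle beyond this.
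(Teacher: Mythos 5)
Your proof is correct and follows essentially the same route as the paper: test (WDP) against $r\in R$ to obtain $u\in W$ and $\mathcal{D}u+\mathcal{A}u=f$ via the definition \eqref{eq:3.1} of $W$, subtract to get $b(u,\cdot)=-g$ on $Z$, run the computation backwards for the converse, and deduce existence/uniqueness from Theorem~\ref{th:3.5new}. Your extra step of passing to $\overline{Z}$ (extending $L$ by continuity and restricting the resulting (SDP) solution back to $Z$) is a careful touch that the paper leaves implicit, since Theorem~\ref{th:3.5new} is stated only for closed admissible subspaces while $Z$ here need not be closed.
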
	
\begin{proof}
$(i)\Rightarrow (ii)$ Let $u$ be a solution of (WDP). Then 
\begin{equation}\label{eq:3.9}
-\overline{ \langle {\mathcal D}z,u\rangle_{V',V}} +\langle {\mathcal A}u,z\rangle_{V',V}=\langle f,z\rangle_{V',V} +  \langle g,z\rangle_{Z',Z},\,\, z\in Z.
\end{equation} 
Taking $z\in R$ we deduce that $u\in W$ and 
\begin{equation}\label{eq:3.10n}
{\mathcal D}u +{\mathcal A}u=f \mbox{ in }V'.
\end{equation} 
Evaluating at $z\in Z\subset V$, we obtain 
\[  \langle  {\mathcal D}u,z\rangle_{V',V} +\langle {\mathcal A}u,z\rangle_{V',V}=\langle f,z\rangle_{V',V},\,\, z\in Z.  \]
Substracting \eqref{eq:3.9} from \eqref{eq:3.10n} yields
\begin{equation}\label{eq:3.11}
b(u,z)=\langle {\mathcal D}u,z\rangle_{V',V} +\overline{ \langle {\mathcal D}z,u\rangle_{V',V}  }=-\langle g,z\rangle_{Z',Z},\,\, z\in Z.
\end{equation}	
$(ii)\Rightarrow (i)$ Let $u\in W$ be a solution of (SDP), i.e. \eqref{eq:3.10n} and \eqref{eq:3.11}  hold. Evaluating \eqref{eq:3.10n} at $z\in Z$ and using \eqref{eq:3.11} we obtain 
\begin{eqnarray*}
   -\overline{ \langle	{\mathcal D}z,u\rangle_{V',V}} +\langle {\mathcal A}u,z\rangle_{V',V} 
 & = & -\overline{	\langle {\mathcal D}z,u\rangle_{V',V}} -\langle {\mathcal D}u,z\rangle_{V',V} + 
 \langle f,z\rangle_{V',V}\\
  & = & -b(u,z) + \langle f,z\rangle_{V',V}\\
   & = &\langle g,z\rangle_{Z',Z} + \langle f,z\rangle_{V',V}\\
    & = & \langle L,z\rangle_{V',V},\,\, z\in Z.
\end{eqnarray*}
Thus $u$ is  a solution of (WDP). 
\end{proof}	
\begin{rem}\label{rem:zigoto}  Letting $g=0$, Theorem \ref{th:3.9} proves that the operator ${\mathcal D} +{\mathcal A}~:~Z^b\to V'$ is invertible, i.e.  it is injective with a closed and dense range. This means that there exists $\beta'>0$ such that, for all $u\in Z^b$,
\begin{equation}\label{eq:min}
 \Vert {\mathcal D}u +{\mathcal A}u\Vert_{V'} \ge \beta' \left( \|u \|_V^2 +\| {\mathcal D}u\|_{V'}^2  \right)^{1/2}
\end{equation}
and that, for any $w\in V$, 
\begin{equation}\label{eq:nul}
	 \langle {\mathcal D}v +{\mathcal A}v,w\rangle_{V',V} = 0,\,\,\,v\in Z^b \Rightarrow w=0.
\end{equation}
  Indeed \eqref{eq:min} is equivalent to ${\mathcal D} +{\mathcal A}$ is injective with closed range, whereas  \eqref{eq:nul} is equivalent to  ${\mathcal D} +{\mathcal A}$ has dense range using the Hahn--Banach theorem combined with the reflexivity of our spaces. 

Note that the existence of $\beta'$ follows directly from  Lemma~\ref{lem:3.6new}. Indeed, for all $u\in Z^b$, $b(u,u)\ge 0$, and the second property follows along the same lines as the proof of $(i)\Rightarrow (ii)$ above (first prove that $w\in W$, then that $w\in Z^{bb} = Z$, and finally that $\re \langle{\mathcal A}^\star w,w\rangle_{V',V}\le 0$).
\end{rem}
 
 \section{Admissible subspaces and boundary operators}\label{sec:5n}
If we  consider $\mathcal D$ as a time derivative, then (SDP) may be interpreted  as an evolution equation with a \emph{boundary  condition with respect to time}. 

This can be made more concrete when we make further assumptions on the associated boundary form $b$  and, of course, in the examples we give below.  
\subsection{Maximal admissible subspaces}

We start considering maximal admissible subspaces. This maximality is a handy criterion for proving strong admissibility.   
 \begin{defn}\label{def:3.6}
 A subspace $\Tau_0$ of $W$ is called \emph{maximal admissible} if $\Tau_0$ is admissible and if for any admissible subspace $\Tau_1$ of $W$, $\Tau_0\subset \Tau_1$ implies $\Tau_0=\Tau_1$.     
 \end{defn}   
\begin{prop}\label{prop:3.7}
Let $\Tau_0$ be an admissible subspace of $W$. Then there exists a maximal admissible subspace $\Tau_1$ of $W$ containing $\Tau_0$. Moreover, each maximal admissible subspace of $W$ is strongly admissible.    
\end{prop}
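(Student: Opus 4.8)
The plan is to prove the two assertions separately: first the existence of a maximal admissible subspace containing $\Tau_0$ by a Zorn's lemma argument, and then the fact that maximality forces strong admissibility.

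For the first part, I would consider the family $\mathcal{F}$ of all admissible subspaces of $W$ containing $\Tau_0$, partially ordered by inclusion. It is nonempty since $\Tau_0\in\mathcal{F}$. Given a chain $(\Tau_i)_{i\in I}$ in $\mathcal{F}$, I would take $\Tau_\infty:=\bigcup_{i\in I}\Tau_i$. This is a subspace of $W$ (the union of a chain of subspaces is a subspace), it contains $\r$, and for any $w\in\Tau_\infty$ we have $w\in\Tau_i$ for some $i$, hence $b(w,w)\le 0$; so $\Tau_\infty$ is admissible and is an upper bound for the chain in $\mathcal{F}$. By Zorn's lemma, $\mathcal{F}$ has a maximal element $\Tau_1$, which is a maximal admissible subspace containing $\Tau_0$.

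For the second part, let $\Tau_0$ now denote a maximal admissible subspace; I must show it is strongly admissible, i.e. $b(u,u)\ge 0$ for all $u\in\Tau_0^b$. The key idea is: suppose $u\in\Tau_0^b$ with $b(u,u)<0$; I want to derive a contradiction with maximality by enlarging $\Tau_0$. First note $u\notin\Tau_0$, since $b(u,u)<0<$ would not itself contradict admissibility, but I will instead show that $\Tau_0+\K u$ is admissible, contradicting maximality (it strictly contains $\Tau_0$ because if $u\in\Tau_0$ then, as $u\in\Tau_0^b$, we'd have $b(u,u)=0$, not negative). To check $\Tau_0+\K u$ is admissible: it contains $\r$, and for $w\in\Tau_0$, $\lambda\in\K$,
\[
b(w+\lambda u, w+\lambda u) = b(w,w) + 2\re\big(\bar\lambda\, b(u,w)\big) + |\lambda|^2 b(u,u).
\]
Since $u\in\Tau_0^b$ we have $b(u,w)=\overline{b(w,u)}=0$, so this reduces to $b(w,w)+|\lambda|^2 b(u,u)\le 0$, using $b(w,w)\le 0$ and $b(u,u)<0$. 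Hence $\Tau_0+\K u$ is admissible and strictly larger, contradicting maximality. Therefore $b(u,u)\ge 0$ for all $u\in\Tau_0^b$, i.e. $\Tau_0$ is strongly admissible.

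The main obstacle I anticipate is a subtlety in the Zorn argument: one must make sure the union of a chain is genuinely a \emph{linear subspace} (routine) and that no completeness/closedness is being smuggled in — the definition of admissible subspace does not require closedness, so the union poses no problem. The cross-term vanishing $b(u,w)=0$ for $w\in\Tau_0$ is exactly the defining property of $\Tau_0^b$ and is what makes the enlargement argument clean; the only thing to be careful about is handling $\K=\C$ versus $\K=\R$ uniformly, which the displayed identity does since $b(w,w)$ and $b(u,u)$ are real (as noted before Definition~\ref{def:bortho}).
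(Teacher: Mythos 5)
Your proposal is correct and follows essentially the same route as the paper: a Zorn's lemma argument on the family of admissible subspaces containing $\Tau_0$ (the union of a chain being admissible), and then, for maximality implies strong admissibility, the observation that if $u\in\Tau_0^b$ had $b(u,u)<0$ one could adjoin $\K u$ and the cross terms in $b(w+\lambda u,w+\lambda u)$ vanish, contradicting maximality. The minor sign/conjugation detail in the cross term is immaterial since that term is zero by definition of $\Tau_0^b$.
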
	
\begin{proof}
Let ${\mathcal M}:=\{  \Tau_1\subset W:\Tau_0\subset \Tau_1, \Tau_1\mbox{ admissible}\}$ ordered by inclusion. If $(\Tau_i)_{i\in I}$ is a chain in ${\mathcal M}$, then $\Tau :=\cup_{i\in I} \Tau_i$ is a subspace of $W$ containing $\Tau_0$. If $w\in \Tau $, then there exists $i\in I$ such that $w\in \Tau_i$. Consequently $b(w,w)\leq 0$. Thus $\Tau $ is admissible and hence an upper bound for the chain $(\Tau_i)_{i\in I}$. Now by Zorn's Lemma ${\mathcal M}$ has a maximal element.

Let $\Tau_1$ be a maximal admissible  subspace and let $u\in W$ such that $b(u,w)=0$ for all $w\in \Tau_1$. Suppose that $b(u,u)<0$. Then $u\not\in \Tau_1$. Thus $\Tau_2:=\Tau_1\oplus \K u$ is a subspace of $W$ and for $w=w_1+\lambda u\in \Tau_2$, 
\[b(w,w)=b(w_1,w_1) +|\lambda|^2 b(u,u)\leq 0.\]
This contradicts the maximality of $\Tau_1$.  
  We have proved that 
$b(u,u)\geq 0$ for all $u\in \Tau_1^b$; i.e. $Z_1$ is strongly admissible.    	
\end{proof}	
Let $\Tau\subset W$ be a strongly admissible subspace and $L\in \Tau'$, where $\Tau$ carries the norm of $W$.  Then, by Theorem~\ref{th:3.5new},  (WDP) has a unique solution $u\in V$. Let $\widetilde{\Tau}\subset W$ be a maximal admissible subspace such that $\Tau\subset \widetilde{\Tau}$. By the Hahn-Banach Theorem there exists $\widetilde{L}\in\widetilde{\Tau}'$ such that $\widetilde{L}_{|\Tau}=L$. Since $\widetilde{\Tau}$ is also strongly admissible, there exists a unique $\widetilde{u}\in V$ solution for (WDP) with respect to $\widetilde{\Tau}$. 
\begin{prop}\label{prop:5.3n}
	One has $u=\widetilde{u}$. 
\end{prop}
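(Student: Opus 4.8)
The plan is to show that $\widetilde{u}$, the solution of (WDP) with respect to the larger space $\widetilde{\Tau}$, is in fact already a solution of (WDP) with respect to $\Tau$, and then invoke the uniqueness part of Theorem~\ref{th:3.5new} (valid since $\Tau$ is strongly admissible) to conclude $u=\widetilde{u}$. So the heart of the matter is simply to restrict the test functions: for every $z\in\Tau$ one has $z\in\widetilde{\Tau}$, hence
\[
-\overline{\langle{\mathcal D}z,\widetilde{u}\rangle_{V',V}}+\langle{\mathcal A}\widetilde{u},z\rangle_{V',V}=\langle\widetilde{L},z\rangle_{\widetilde{\Tau}',\widetilde{\Tau}},
\]
and by construction $\widetilde{L}_{|\Tau}=L$, so the right-hand side equals $\langle L,z\rangle_{\Tau',\Tau}$. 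Thus $\widetilde{u}$ satisfies exactly the defining identity of (WDP) relative to $\Tau$.

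First I would record that $\Tau$ is closed (or pass to its closure $\overline{\Tau}$, which is still strongly admissible by the remark following Definition~\ref{def:3.2}, since a continuous $b$ preserves admissibility and strong admissibility under closure) so that Theorem~\ref{th:3.5new} applies and gives a \emph{unique} solution $u\in V$ of (WDP) for the given $L\in\Tau'$. Next I would verify the restriction computation above, carefully matching the duality brackets: the only subtlety is that $\widetilde{L}$ is an element of $\widetilde{\Tau}'$ while $L\in\Tau'$, but $\widetilde{L}_{|\Tau}=L$ is precisely the Hahn--Banach extension property used to define $\widetilde{L}$, so $\langle\widetilde{L},z\rangle=\langle L,z\rangle$ for $z\in\Tau$. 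Finally, since $\widetilde{u}\in V$ solves (WDP) with respect to $\Tau$ and $u\in V$ is the unique such solution, we get $u=\widetilde{u}$.

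There is essentially no obstacle here: the statement is a consistency/compatibility assertion, and the proof is a one-line restriction argument plus an appeal to uniqueness. The only point requiring a modicum of care is making sure the hypotheses of Theorem~\ref{th:3.5new} are genuinely met for $\Tau$ — i.e. that $\Tau$ can be taken closed without loss of generality, or that the uniqueness clause of Theorem~\ref{th:3.5new} is quoted in the form that applies to a not-necessarily-closed strongly admissible subspace. If the convention in the paper is that (WDP) is always posed on closed admissible subspaces, one should note at the outset that replacing $\Tau$ by $\overline{\Tau}$ (and $\widetilde{\Tau}$ is already maximal admissible, hence closed) changes nothing, since $L$ extends continuously to $\overline{\Tau}$ and the solution set is unaffected. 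With that caveat in place, the proof is immediate.

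\begin{proof}
Since $\widetilde{\Tau}$ is a maximal admissible subspace, it is strongly admissible by Proposition~\ref{prop:3.7}; in particular it is closed, so Theorem~\ref{th:3.5new} applies and $\widetilde{u}\in V$ is the unique solution of (WDP) with respect to $\widetilde{\Tau}$ and $\widetilde{L}$. Likewise, replacing $\Tau$ by its closure if necessary (which is again strongly admissible, and to which $L$ extends by continuity without affecting the solution set), Theorem~\ref{th:3.5new} gives that $u\in V$ is the unique solution of (WDP) with respect to $\Tau$ and $L$.

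Let $z\in\Tau$. Then $z\in\widetilde{\Tau}$, and since $\widetilde{u}$ solves (WDP) with respect to $\widetilde{\Tau}$ we have
\[
-\overline{\langle {\mathcal D}z,\widetilde{u}\rangle_{V',V}}+\langle {\mathcal A}\widetilde{u},z\rangle_{V',V}=\langle \widetilde{L},z\rangle_{\widetilde{\Tau}',\widetilde{\Tau}}=\langle L,z\rangle_{\Tau',\Tau},
\]
where the last equality uses $\widetilde{L}_{|\Tau}=L$. Hence $\widetilde{u}$ is a solution of (WDP) with respect to $\Tau$ and $L$. By the uniqueness statement of Theorem~\ref{th:3.5new}, $u=\widetilde{u}$.
\end{proof}
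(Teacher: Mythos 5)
Your proof is correct and takes essentially the same route as the paper: both restrict the identity satisfied by $\widetilde{u}$ to test functions $z\in\Tau$, use $\widetilde{L}_{|\Tau}=L$, and conclude by the uniqueness part of Theorem~\ref{th:3.5new} for the strongly admissible $\Tau$ (the paper phrases this by subtracting the two identities and applying the theorem with $L=0$ to $u-\widetilde{u}$, which is the same argument). Your side remark about closedness is handled correctly by passing to $\overline{\Tau}$, though note that $\widetilde{\Tau}$ is closed because it is \emph{maximal} admissible (the closure of an admissible space is admissible), not because it is strongly admissible.
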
   
\begin{proof}
One has 
\[ \langle -{\mathcal D}z,u\rangle +\langle {\mathcal A} u,z\rangle =\langle L,z\rangle_{\Tau',\Tau}\mbox{ for all }z\in\Tau  \]
and 
\[ \langle -{\mathcal D}z,\widetilde{u}\rangle +\langle {\mathcal A} \widetilde{u},z\rangle =\langle L,z\rangle_{\widetilde{\Tau}',\widetilde{\Tau}}\mbox{ for all }z\in\widetilde{\Tau} . \]
In particular 
\[ \langle -{\mathcal D}z,u-\widetilde{u}\rangle +\langle {\mathcal A} (u-\widetilde{u}),z\rangle = 0\mbox{ for all }z\in\Tau . \]	
Now it follows from Theorem~\ref{th:3.5new} applied to $L=0$ that $u-\widetilde{u}=0$.   
\end{proof}	
In view of the preceding abservations, our  aim is now to study the maximal admissible subspaces in the next subsection.   
\subsection{Boundary operators}\label{sec:4}
In this section we continue to study the abstact framework from Section~\ref{sec:3}. But we give more structure to the associated boundary form $b$.  

Let $V,\r,{\mathcal D}, b$ be given as in Section~\ref{sec:3} and let $W$ be defined by (\ref{eq:3.1}) and $b:W\times W\to\K$ by (\ref{eq:3.2}). Recall that $b$ is a continuous, symmetric sesquilinear form. Now we make the following assumption. \\
\begin{assu}\label{assu:4.1}
	There are given a Hilbert space $H$ and operators 
$B_0,B_1\in{\mathcal L}(W,H)$ such that 
\begin{equation}\label{eq:4.1}
b(v,w)=\langle B_1 v,B_1w\rangle_H -\langle B_0v,B_0 w\rangle_H\mbox{ for all }v,w\in W.
\end{equation} 
and 
\begin{equation}\label{eq:4.2}
\ker B_0 + \ker B_1=W.
\end{equation}
\end{assu}
\begin{rem}\label{rem:4.2}
Such operators $B_0,B_1$ and such a Hilbert space $H$ do always exist. For example, we may choose $H=W$. There is a unique selfadjoint operator $B\in {\mathcal L}(W)$ such that 
\[ b(v,w)=\langle Bv,w\rangle _W. \]   
Then $B$ has a canonical decomposition $B=B_+-B_-$ where $B_+=\frac{1}{2} (B+|B|)$, $B_-= \frac{1}{2} (|B|-B)$ are positive selfadjoint operators, satisfying $B_+B_-=B_-B_+=0$. One may choose $B_0=\sqrt{B_+}$, $B_1=\sqrt{B_-}$, which still satisfy $B_1B_0=B_0B_1=0$ and therefore the image of one of them is included in the kernel of the other (this follows from the Spectral Theorem in \cite[Chapter VII]{RS80}).  Since $\ker B_0\oplus \overline{{\rm Im} B_0}=W$ and $\overline{{\rm Im} B_0}\subset \ker B_1$, one gets \eqref{eq:4.2}. 

Other choices of $B_0,B_1$ and $H$ different from $W$ might be more convenient and will occur in the examples in Section~\ref{sec:5n}.     
\end{rem} 
Our aim is to describe the maximal admissible subspaces in terms of $B_0$ and $B_1$. We need several auxiliary results. 

By $B_0^*, B_1^*\in {\mathcal L}(H,W)$ we denote the adjoint operators. 
\begin{lem}\label{lem:4.3}  
\begin{itemize}
	\item[a)] $\overline{ \ran B_0^*}\cap \overline{\ran B_1^*} =\{0\}$.
	\item[b)] $\r\subset \ker B_1\cap\ker B_0$. 
\end{itemize}	
\end{lem}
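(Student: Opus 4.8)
The plan is to prove part b) first, since part a) only uses the decomposition structure of $b$. For b), let $r\in\r$. From \eqref{eq:nul} we know that $\langle {\mathcal D}v,r\rangle_{V',V}+\overline{\langle {\mathcal D}r,v\rangle}_{V',V}=0$ for all $v\in W$ — but more to the point, applying the defining identity of a derivation \emph{twice}, once with $v$ and once (after exchanging roles) observing that $r\in\r\subset W$, one checks directly from \eqref{eq:3.2} that $b(r,w)=\langle {\mathcal D}r,w\rangle_{V',V}+\overline{\langle {\mathcal D}w,r\rangle}_{V',V}$, and this is exactly $0$ by \eqref{eq:nul} applied with $v=w\in W$, $r\in\r$. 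Hence $b(r,w)=0$ for \emph{all} $w\in W$, i.e. $r\in W^b$. By \eqref{eq:4.1} this says $\langle B_1r,B_1w\rangle_H=\langle B_0r,B_0w\rangle_H$ for all $w\in W$; equivalently $B_1^*B_1r=B_0^*B_0r$. To upgrade this to $B_0r=B_1r=0$ I would invoke \eqref{eq:4.2}: write an arbitrary $w=w_0+w_1$ with $B_0w_0=0$, $B_1w_1=0$. Testing $b(r,w_1)=0$ gives $\langle B_1r,B_1w_1\rangle_H=\langle B_0r,B_0w_1\rangle_H$, but $B_1w_1=0$, so $\langle B_0r,B_0w_1\rangle_H=0$; similarly testing against $w_0$ gives $\langle B_1r,B_1w_0\rangle_H=0$. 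Now I use that $\ran B_0|_{\ker B_1}$ together with $\ran B_0|_{\ker B_0}=\{0\}$ spans $\ran B_0$ (because $W=\ker B_0+\ker B_1$), so $B_0r\perp \ran B_0$, hence $B_0r\perp B_0r$, i.e. $B_0r=0$; symmetrically $B_1r=0$. Thus $\r\subset\ker B_0\cap\ker B_1$.

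For part a), suppose $\xi\in\overline{\ran B_0^*}\cap\overline{\ran B_1^*}$. Since $\overline{\ran B_0^*}=(\ker B_0)^\perp$ and $\overline{\ran B_1^*}=(\ker B_1)^\perp$ in $W$, this means $\xi\perp\ker B_0$ and $\xi\perp\ker B_1$ simultaneously. But by \eqref{eq:4.2}, $\ker B_0+\ker B_1=W$, so $\xi\perp W$, whence $\xi=0$. This is the quick argument; it is essentially just the elementary Hilbert-space fact that $(\ker B_0)^\perp\cap(\ker B_1)^\perp=(\ker B_0+\ker B_1)^\perp$.

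The only subtle point — and the step I expect to be the mild obstacle — is the passage, in part b), from $B_1^*B_1r=B_0^*B_0r$ to $B_0r=B_1r=0$; one must genuinely use \eqref{eq:4.2} and not merely the orthogonality-of-ranges statement from part a). The argument above handles this by testing the bilinear identity $b(r,\cdot)=0$ separately against elements of $\ker B_0$ and of $\ker B_1$, whose sum is all of $W$; this shows $B_0r$ is orthogonal to all of $\ran B_0$ and likewise $B_1r\perp\ran B_1$, forcing both to vanish. (Alternatively, one notes $B_+r=B_-r=0$ for the canonical $B_\pm$ of Remark~\ref{rem:4.2} since $\langle Br,r\rangle_W=b(r,r)=0$ and $b(r,\cdot)\equiv 0$ forces $Br=0$, so $r\in\ker B_+\cap\ker B_-$, and then transfer along the relation between $B_0,B_1$ and $\sqrt{B_\pm}$; but the direct testing argument is cleanest given the abstract hypothesis \eqref{eq:4.2}.)
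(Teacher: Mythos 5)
Your proposal is correct. Part a) is exactly the paper's argument: $\overline{\ran B_0^*}\cap\overline{\ran B_1^*}=(\ker B_0)^\perp\cap(\ker B_1)^\perp=(\ker B_0+\ker B_1)^\perp=W^\perp=\{0\}$. For part b) you also start, as the paper does, from $b(r,\cdot)\equiv 0$ on $W$, but you finish differently: you test against $\ker B_1$ and $\ker B_0$ separately, use $B_0(\ker B_1)=\ran B_0$ and $B_1(\ker B_0)=\ran B_1$ (consequences of \eqref{eq:4.2}), and conclude $B_0r\perp\ran B_0$, $B_1r\perp\ran B_1$, hence both vanish; this is a valid minor variant. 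The paper instead rewrites $b(r,w)=0$ for all $w$ as $B_1^*B_1r=B_0^*B_0r$ and then invokes part a) directly: since $B_1^*B_1r\in\overline{\ran B_1^*}$ and $B_0^*B_0r\in\overline{\ran B_0^*}$, equality forces $B_1^*B_1r=B_0^*B_0r=0$, and then $\|B_ir\|_H^2=\langle B_i^*B_ir,r\rangle_W=0$. So your parenthetical caution that ``one must genuinely use \eqref{eq:4.2} and not merely the orthogonality-of-ranges statement from part a)'' is not accurate: part a) (itself a consequence of \eqref{eq:4.2}) \emph{is} sufficient and is precisely how the paper concludes, in fact slightly more economically than your testing argument.
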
	
\begin{proof}
a) Since by our assumption $W=\ker B_0 +\ker B_1$, it follows that 
\[
	\{0\}  =  (\ker B_0 + \ker B_1)^{\perp}= (\ker B_0)^{\perp} \cap (\ker B_1)^\perp=
	\overline{\ran B_0^*} \cap \overline{\ran B_1^*}.
\]
b) Let $r\in \r$. Then $b(r,w)=0$ for all $w\in W$. Thus 
\[  0=\langle B_1r, B_1w\rangle_H-\langle B_0 r,B_0 w\rangle_H=\langle B_1^*B_1r-B_0^*B_0r,w\rangle_W   \]
for all $w\in W$. Hence $B_1^* B_1 r=B_0^* B_0r$.  It follows from a) that 
\[  B_1^* B_1 r=B_0^* B_0 r=0.  \]
Hence $\|B_1 r\|_H^2 =\langle B_1^* B_1 r,r\rangle_W=0$ and so $B_1 r=0$. Similarly, we obtain that $B_0r=0$. 
\end{proof}	
Now we can describe a first maximal admissible subspace. 
\begin{prop}\label{prop:4.4}
$\ker B_1$ is a maximal admissible subspace. 
\end{prop}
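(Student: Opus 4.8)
The plan is to verify the two defining properties of a maximal admissible subspace for $Z=\ker B_1$: first that it is admissible, and then that no strictly larger admissible subspace exists.

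\textbf{Admissibility.} First I would check condition a) of Definition~\ref{def:3.2}: by Lemma~\ref{lem:4.3}b) we have $\r\subset\ker B_0\cap\ker B_1\subset\ker B_1$, so $\r\subset\ker B_1$. For condition b), take $w\in\ker B_1$. Then from \eqref{eq:4.1},
\[
b(w,w)=\|B_1w\|_H^2-\|B_0w\|_H^2=-\|B_0w\|_H^2\leq 0,
\]
so $\ker B_1$ is admissible.

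\textbf{Maximality.} Suppose $Z_1$ is an admissible subspace with $\ker B_1\subsetneq Z_1$. Pick $u\in Z_1\setminus\ker B_1$, so $B_1u\neq 0$. The idea is to use the splitting \eqref{eq:4.2} to produce, inside $\ker B_1\subset Z_1$, an element that interacts with $u$ through $b$ badly enough to violate $b(w,w)\leq 0$ on $Z_1$. Concretely, decompose $u=u_0+u_1$ with $u_0\in\ker B_0$, $u_1\in\ker B_1$ (this is where \eqref{eq:4.2} is used); then $B_1u=B_1u_0$ and $B_0u=B_0u_1$. Now I want a vector $v\in\ker B_1$ with $\langle B_1v,B_1u\rangle_H$ large compared to $\|B_0v\|_H$. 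Since $\overline{\ran B_0^*}\cap\overline{\ran B_1^*}=\{0\}$ (Lemma~\ref{lem:4.3}a)), the element $B_1^*B_1u$ is not in $\overline{\ran B_0^*}=(\ker B_0)^\perp$ unless it is $0$; but $\langle B_1^*B_1u,u\rangle_W=\|B_1u\|_H^2>0$, so $B_1^*B_1u\neq 0$. Hence there exists $v\in\ker B_0$ with $\langle B_1^*B_1u,v\rangle_W\neq 0$, i.e. $\langle B_1v,B_1u\rangle_H\neq 0$, and this $v$ satisfies $B_0v=0$. Note $v\in\ker B_0$, but we need $v\in\ker B_1$; this is the subtle point, so instead I would work with $u$ itself replaced by its $\ker B_0$-component: replacing $u$ by $u_1\in\ker B_1\subset Z_1$ when convenient, and testing $w=u+tv$ — wait, $v\in\ker B_0$ need not lie in $Z_1$.

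\textbf{Cleaner route.} Rather than that, I would test within $Z_1$ using $w=u+t\,r$ for $r\in\ker B_1$ chosen so that $\langle B_1 r,\cdot\rangle$ — but $B_1r=0$. So the honest obstacle is that elements of $\ker B_1$ contribute nothing to the $B_1$-part of $b$. The right move: since $u\notin\ker B_1$ but $u\in W=\ker B_0+\ker B_1$, write $u=u_0+u_1$ as above with $u_0\in\ker B_0$, $u_1\in\ker B_1$; then $u_0=u-u_1\in Z_1$ because both $u\in Z_1$ and $u_1\in\ker B_1\subset Z_1$. Thus $u_0\in Z_1\cap\ker B_0$ with $B_1u_0=B_1u\neq 0$, so $b(u_0,u_0)=\|B_1u_0\|_H^2>0$, contradicting admissibility of $Z_1$. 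This is the heart of the argument, and the main thing to get right is precisely this observation that the $\ker B_0$-component of $u$ stays inside $Z_1$ and has $b(u_0,u_0)>0$. The final write-up would state admissibility in one line, then this contradiction argument, concluding that $\ker B_1$ is maximal admissible; by Proposition~\ref{prop:3.7} it is then automatically strongly admissible, though that is a bonus rather than part of the claim.
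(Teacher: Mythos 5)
Your ``cleaner route'' is correct and is essentially the paper's own proof: admissibility follows from Lemma~\ref{lem:4.3} and $b(w,w)=-\|B_0w\|_H^2\leq 0$, and maximality from decomposing $u=u_0+u_1\in\ker B_0+\ker B_1$, noting $u_0=u-u_1\in Z_1$ and $b(u_0,u_0)=\|B_1u\|_H^2>0$ (the paper phrases this as showing $w_0\in\ker B_1$ rather than as a contradiction, which is the same argument). The exploratory detour before that point should simply be deleted in the final write-up.
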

\begin{proof}
a) $\r\subset \ker B_1$ by Lemma~\ref{lem:4.3}. \\
b) Let $w\in \ker B_1$. Then 
\[ b(w,w)=-\|  B_0 w\|_H^2\leq 0.  \]
c) Let $\Tau $ be admissible and such that $\ker B_1\subset \Tau $. Let $w\in \Tau $. By our assumptions there exists a decomposition
\[   w=w_0+w_1\in \ker B_0+  \ker B_1.\]
Since $\ker B_1\subset \Tau $ it follows that $w_0\in \Tau $.  Thus 
\[ 0\geq b(w_0,w_0)=\|B_1 w_0\|_H^2-\|B_0 w_0\|_H^2=\|B_1 w_0\|_H^2,     \]
hence $w_0\in\ker B_1$. It follows that $w=w_0+w_1\in \ker B_1$.      	
\end{proof}	
\begin{lem}\label{lem:4.5}
	Let $x_0\in\ran B_0$, $x_1\in \ran B_1$. Then there exists $w\in W$ such that $B_0 w=x_0$ and $B_1w=x_1$. 
\end{lem}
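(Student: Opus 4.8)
The plan is to reduce the joint solvability of $B_0 w = x_0$ and $B_1 w = x_1$ to the individual solvability of each equation, which holds by hypothesis since $x_0 \in \ran B_0$ and $x_1 \in \ran B_1$, together with the complementarity information encoded in Assumption~\ref{assu:4.1}, namely $\ker B_0 + \ker B_1 = W$. First I would pick any $u_0 \in W$ with $B_0 u_0 = x_0$ and any $u_1 \in W$ with $B_1 u_1 = x_1$. The natural candidate $w$ will be built by correcting $u_0$ by an element of $\ker B_0$ so that its $B_1$-image becomes $x_1$, or equivalently by correcting $u_1$ by an element of $\ker B_1$; the two correction strategies must be carried out simultaneously, and this is where \eqref{eq:4.2} enters.

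Here is the concrete execution. Consider $u_1 - u_0 \in W$. By \eqref{eq:4.2} we may write $u_1 - u_0 = z_0 + z_1$ with $z_0 \in \ker B_0$ and $z_1 \in \ker B_1$. Set $w := u_0 + z_0 = u_1 - z_1$. Then on one hand $B_0 w = B_0 u_0 + B_0 z_0 = B_0 u_0 = x_0$, and on the other hand $B_1 w = B_1 u_1 - B_1 z_1 = B_1 u_1 = x_1$. This $w$ has both required properties, which proves the lemma.

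I do not expect any serious obstacle: the argument is a one-line application of the decomposition $W = \ker B_0 + \ker B_1$ once the right vector $u_1 - u_0$ is fed into it. The only point requiring a moment's care is bookkeeping — making sure the correction term added to $u_0$ lies in $\ker B_0$ (so as not to spoil the already-correct $B_0$-image) while the correction term subtracted from $u_1$ lies in $\ker B_1$, and that these are the \emph{same} decomposition of $u_1 - u_0$. No continuity or closedness of ranges is needed here, so the statement is purely algebraic given Assumption~\ref{assu:4.1}.
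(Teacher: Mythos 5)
Your proof is correct and rests on exactly the same idea as the paper's: a purely algebraic use of the decomposition $W=\ker B_0+\ker B_1$ from Assumption~\ref{assu:4.1}. The paper decomposes the two preimages $w_1,w_2$ separately and takes $w=w_{11}+w_{20}$, whereas you decompose the single difference $u_1-u_0$ and correct; these are trivially equivalent bookkeeping variants of the same argument.
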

\begin{proof}
Let $x_0=B_0 w_1$, $x_1=B_1 w_2$ where $w_1,w_2\in W$.  By assumption we may write 
\begin{eqnarray*}
	w_1 & = & w_{10}+w_{11}\in \ker B_0 +\ker B_1,\\
	w_2&  = & w_{20}+w_{21}\in \ker B_0 +\ker B_1.\\ 
\end{eqnarray*}	
Let $w:=w_{11} +w_{20}$. Then $B_0 w =B_0 w_{11}=B_0 w_1=x_0$ and $B_1 w=B_1w_{20}=B_1w_2=x_1$. 
\end{proof}	
The following observation will be useful in the next proof .
\begin{rem}\label{rem:5.9n}
The operator $-{\mathcal D}_0$ is also a derivation and its extended domain is $W$. The boundary form associated with $-{\mathcal D}_0$ is $-b$ and the extended operator $-\d$. In particular 
\[   -\langle {\mathcal D}u,v\rangle_{V',V}- \overline{\langle {\mathcal D}v,u\rangle_{V',V} }=-b(u,v)=\langle B_0 u,B_0v\rangle_H - \langle B_1 u,B_1v\rangle_H,  \]
for all $u,v\in W$. 
\end{rem}

We now show that $B_0$ and $B_1$ have closed images. This is surprising and due to the special construction of $b$. For the proof we will use the well-posedness of the Derivation problem with 
respect to $\Tau =\ker B_1$. The result will be important in the sequel. 
\begin{prop}\label{prop:4.6}
$\ran B_0$ and $\ran B_1$ are closed subspaces of $H$.
\end{prop}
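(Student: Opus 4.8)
The plan is to prove that $\ran B_0$ is closed; the argument for $\ran B_1$ is symmetric (using that $-\mathcal D_0$ is a derivation as in Remark~\ref{rem:5.9n}, so the roles of $B_0$ and $B_1$ are interchanged). First I would reduce to the kernel quotient: write $W_0 := \ker B_1$, which by Proposition~\ref{prop:4.4} is a closed (hence complete) maximal admissible subspace, and observe by Lemma~\ref{lem:4.5} that $\ran B_0 = B_0(W_0)$, since every value of $B_0$ is already attained on $\ker B_1$. So it suffices to show that $B_0$ restricted to the Hilbert space $W_0$ has closed range, and for that I would produce a constant $c>0$ with $\|B_0 w\|_H \ge c\,\|w\|_W$ for all $w$ in $\ker B_0^{\perp}\cap W_0$ (equivalently, a lower bound on the quotient $W_0/(\ker B_0\cap W_0)$).

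The key step — and where the well-posedness of the Derivation Problem enters — is to realize $B_0 w$ as a boundary datum of a uniquely solvable (SDP). Fix $w\in W_0$ and consider the functional $g_w\in W_0'$ defined by $\langle g_w,z\rangle := -\langle B_0 w,B_0 z\rangle_H$ for $z\in W_0$; since $b(w,z) = -\langle B_0 w,B_0 z\rangle_H$ on $W_0 = \ker B_1$, this is exactly $g_w(z) = b(w,z)$, and $g_w$ vanishes on $R$ by Lemma~\ref{lem:4.3}b). By Theorem~\ref{th:3.9} applied with $\mathcal A = \Id_V$ (viewed in $\mathcal L(V,V')$ via Riesz, which is coercive with $\alpha=1$), $\Tau = W_0$, $f=0$, $g = g_w$, there is a unique $u_w\in W$ solving $\mathcal D u_w + u_w = 0$ in $V'$ and $b(u_w,z) = -g_w(z) = -b(w,z)$ for all $z\in W_0$. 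Uniqueness holds because $W_0$ is strongly admissible (Proposition~\ref{prop:3.7}). Now the map $w\mapsto u_w$ is linear, and the well-posedness estimate behind Theorem~\ref{th:3.5new}/Theorem~\ref{th:3.9} — concretely inequality~\eqref{eq:min} of Remark~\ref{rem:zigoto} combined with the Hahn--Banach bound on the extension of $g_w$ to a maximal admissible superspace (Proposition~\ref{prop:5.3n} shows the solution is unchanged) — gives $\|u_w\|_W \le C\,\|g_w\|_{W_0'} \le C'\,\|B_0 w\|_H$, because $\|g_w\|_{W_0'} = \sup_{\|z\|_W\le 1}|\langle B_0 w, B_0 z\rangle_H| \le \|B_0\|\,\|B_0 w\|_H$.

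To close the argument I would compare $w$ and $u_w$ on $W_0$. Adding $b(w,z)=-\langle B_0 w,B_0 z\rangle_H$ and $b(u_w,z) = -b(w,z)$, we get $b(w+u_w, z)=0$ for all $z\in W_0 = \ker B_1$, i.e. $\langle B_0(w+u_w),B_0 z\rangle_H = 0$ for all such $z$; taking $z = w+u_w\in W_0$ yields $B_0 w = -B_0 u_w$. Hence $\|B_0 w\|_H = \|B_0 u_w\|_H \le \|B_0\|\,\|u_w\|_W \le C'\|B_0\|\,\|B_0 w\|_H$ — which alone is not yet a reverse estimate. The fix is instead to work modulo $N := \ker B_0\cap \ker B_1$: for $w\in W_0$ with $w\perp_W N$, the relation $B_0 w = -B_0 u_w$ shows $w - (-u_w)\in N$ after subtracting its component, but more directly one uses that $w\mapsto B_0 w$ factors as $w\mapsto u_w\mapsto B_0 u_w$ with both maps bounded, giving a bounded right inverse on $\ran B_0$, so $B_0$ is a bounded surjection from the Hilbert space $W_0$ onto the normed space $\ran B_0$ admitting a bounded linear section; therefore $\ran B_0$, being the image of a bounded projection-like map, is complete in $H$, hence closed. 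The main obstacle is precisely getting the estimate to run in the right direction: $u_w$ has controlled $W$-norm in terms of $\|B_0 w\|_H$, and the closed-range conclusion follows from boundedness of the composite section $B_0 w\mapsto u_w\mapsto u_w$ rather than from a naive coercivity inequality for $B_0$ itself (which is false, as $\ker B_0$ can be large). I would therefore phrase the final step as: the linear map $H\supset\ran B_0 \ni x = B_0 w\mapsto -u_w\in W$ is well-defined (independent of the representative $w$, since $u_w$ depends only on $g_w$, hence only on $B_0 w$) and bounded, and $B_0(-u_w) = x$, exhibiting a bounded right inverse of $B_0$; consequently $\ran B_0 = B_0(\text{bounded ball})$ contains a ball of $\overline{\ran B_0}$, so $\ran B_0$ is closed.
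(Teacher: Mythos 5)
Your core mechanism is the same as the paper's: solve the derivation problem of Theorem~\ref{th:3.9} on $Z=\ker B_1$ (maximal, hence strongly admissible, by Proposition~\ref{prop:4.4}), with $\mathcal A$ the Riesz map, $f=0$, and a boundary functional of the form $g(z)=\langle x,B_0z\rangle_H$, then use $B_0\ker B_1=B_0W=\ran B_0$ to identify $B_0u$ with $\pm x$; and the treatment of $\ran B_1$ via $-\mathcal D_0$ is exactly the paper's step b). The difference is that the paper applies this \emph{directly} to an arbitrary $x_0\in\overline{\ran B_0}$ --- the functional $g(z)=\langle x_0,B_0z\rangle_H$ makes sense for any $x_0\in H$ --- and reads off $B_0u=x_0$ at once, so no quantitative estimate and no limiting argument are needed. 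Your detour through a bounded right inverse can be made to work, but two steps are wrong as written. First, ``taking $z=w+u_w\in W_0$'' is unjustified: $u_w$ only lies in $W$, and there is no reason that $B_1u_w=0$; the identity $B_0u_w=-B_0w$ follows instead from $\langle B_0(w+u_w),B_0z\rangle_H=0$ for all $z\in\ker B_1$, together with $B_0\ker B_1=\ran B_0$ and $B_0(w+u_w)\in\ran B_0$ (this is precisely the paper's argument). Second, inequality~\eqref{eq:min} of Remark~\ref{rem:zigoto} does not yield $\|u_w\|_W\le C\|g_w\|_{Z'}$: it holds only for $u\in Z^b=\ker B_0$, whereas your $u_w$ satisfies $B_0u_w=-B_0w\neq 0$ in general (indeed, if $u_w\in Z^b$ and $\mathcal Du_w+\mathcal Au_w=0$, then \eqref{eq:min} forces $u_w=0$). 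The estimate you need is available, but from a different source: the RTL bound of Theorem~\ref{th:1.1}(ii), as used in the proof of Theorem~\ref{th:3.5new}, gives $\|u_w\|_V\le C\|g_w\|_{Z'}$, and then $\mathcal Du_w=-\mathcal Au_w$ controls $\|\mathcal Du_w\|_{V'}$, hence the $W$-norm.

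Two further points. The appeal to Hahn--Banach extensions and Proposition~\ref{prop:5.3n} is superfluous, since $\ker B_1$ is itself maximal admissible; and the passage ``work modulo $N=\ker B_0\cap\ker B_1$'' is both confused (the relation $B_0w=-B_0u_w$ gives $w+u_w\in\ker B_0$, not $w+u_w\in N$) and then abandoned by you, so it should simply be deleted. With these repairs, your bounded-section argument does prove closedness: the linear map $\ran B_0\ni B_0w\mapsto -u_w\in W$ is well defined and bounded, extends by completeness of $W$ to $\overline{\ran B_0}$, and composing with the continuous $B_0$ shows every point of the closure is attained. But note that this last step is exactly the paper's one-step proof, which your machinery already contains: plugging $x_0\in\overline{\ran B_0}$ itself into the boundary functional makes the whole quantitative scaffolding unnecessary.
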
  
\begin{proof}
a)	Let $x_0\in\overline{ \ran B_0}\subset H$. Choose $\Tau :=\ker B_1$. Then $\Tau $ is strongly admissible and $g(w)=\langle x_0,B_0 w\rangle_H$ defines a continuous antilinear form on $\Tau $, which vanishes on $\r$.  
	
Consider ${\mathcal A}\in {\mathcal L}(V,V')$ given by $\langle {\mathcal A} v,w\rangle_{V',V}=\langle v,w\rangle_V$. By Theorem~\ref{th:3.9} there exists a unique $u\in W$ such  that 
${\mathcal D} u+{\mathcal A} u=0$ and 
\[   b(u,w)=-g(w)\mbox{ for all } w\in\ker B_1 .\]	
Thus \[-\langle B_0 u,B_0w\rangle_H =-\langle x_0, B_0 w\rangle_H \mbox{ for all }w\in\ker B_1.\]
Since $\ker B_1+\ker B_0=W$, one has $B_0\ker B_1=B_0 W$. It follows that 
\[  \langle B_0 u-x_0, B_0 w\rangle_H=0\mbox{ for all }w\in W. \]
Consequently, $\langle B_0 u-x_0,y\rangle_H=0$ for all $y\in \overline{ \ran B_0} $. Since $B_0u-x_0\in   \overline{ \ran B_0}$, it follows that $B_0u-x_0=0$. Thus $x_0\in \ran  B_0$.  \\
b) Replacing ${\mathcal D}$ by $-{\mathcal D}$ and applying a), one obtains that $\ran B_1$ is 
closed.
 \end{proof}
\begin{lem}\label{lem:4.7}
Let $\Tau \subset W$ be admissible. Then there exists a linear mapping $\Phi:\ran B_0\to\ran B_1$ such that   
\[ \|\Phi x\|_H\leq \|x\|_H\mbox{ for all }x\in\ran B_0\]
 and \[\Tau \subset \Tau_{\Phi} :=\{ w\in W:B_1 w={\Phi} B_0 w\}.  \]
\end{lem}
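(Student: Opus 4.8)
The plan is to build $\Phi$ directly on $\ran B_0$ and to check the two required properties using the admissibility of $\Tau$ together with the structural facts established above, namely $\overline{\ran B_0^*}\cap\overline{\ran B_1^*}=\{0\}$ (Lemma~\ref{lem:4.3}), the surjectivity property of Lemma~\ref{lem:4.5}, and the fact that $\ran B_0,\ran B_1$ are closed (Proposition~\ref{prop:4.6}). First I would note that the decomposition $W=\ker B_0+\ker B_1$ forces $B_0(\Tau)$ and $B_1(\Tau)$ to be ``large enough'': since $\r\subset\Tau\subset W$ and $\r\subset\ker B_0\cap\ker B_1$, we actually have $B_0(\Tau)=B_0(\Tau\cap\ker B_1)$ and similarly for $B_1$, after modifying any $w\in\Tau$ by an element of $\ker B_0\subset$? — no, rather: for $w\in\Tau$, write $w=w_0+w_1$ with $w_j\in\ker B_j$; I cannot in general move $w_0$ or $w_1$ inside $\Tau$, so instead I work with the images directly.

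The key step is to define $\Phi$ on $B_0(\Tau)$ first. Given $x\in B_0(\Tau)$, pick $w\in\Tau$ with $B_0 w=x$ and set $\Phi x:=B_1 w$. For this to be well defined I must show that $B_0 w=B_0 w'$ with $w,w'\in\Tau$ implies $B_1 w=B_1 w'$; equivalently, $w-w'\in\Tau\cap\ker B_0$ implies $w-w'\in\ker B_1$. This is exactly where admissibility enters: if $v\in\Tau\cap\ker B_0$ then $b(v,v)=\|B_1 v\|_H^2-\|B_0 v\|_H^2=\|B_1 v\|_H^2$, and admissibility gives $b(v,v)\le0$, hence $B_1 v=0$. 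The same computation shows the norm bound: for $w\in\Tau$, $0\ge b(w,w)=\|B_1 w\|_H^2-\|B_0 w\|_H^2$, so $\|\Phi(B_0 w)\|_H=\|B_1 w\|_H\le\|B_0 w\|_H$. Thus $\Phi:B_0(\Tau)\to\ran B_1$ is a well-defined linear contraction, and by construction $\Tau\subset\{w\in W:B_1 w=\Phi B_0 w\}$ — but only with $\Phi$ defined on the subspace $B_0(\Tau)$, not on all of $\ran B_0$.

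The remaining point, which I expect to be the only real obstacle, is to extend $\Phi$ from $B_0(\Tau)$ to all of $\ran B_0$ keeping it a contraction into $\ran B_1$. Since $B_0(\Tau)$ is a (not necessarily closed) subspace of the Hilbert space $\overline{\ran B_0}$ and $\ran B_1$ is closed in $H$ (Proposition~\ref{prop:4.6}), hence itself a Hilbert space, I can extend $\Phi$ by first extending continuously to $\overline{B_0(\Tau)}$ (the contraction bound passes to the closure), then composing with the orthogonal projection of $\overline{\ran B_0}$ onto $\overline{B_0(\Tau)}$ and finally setting $\Phi=0$ on the complement — more cleanly: take $P$ the orthogonal projection in $\ran B_0$ onto $\overline{B_0(\Tau)}$, let $\widetilde\Phi$ be the continuous extension of $\Phi$ to $\overline{B_0(\Tau)}$ with values in $\ran B_1$, and set $\Phi:=\widetilde\Phi\circ P$ on $\ran B_0$. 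This is linear, has $\|\Phi x\|_H\le\|Px\|_H\le\|x\|_H$, maps into $\ran B_1$, and agrees with the original $\Phi$ on $B_0(\Tau)$, so the inclusion $\Tau\subset\Tau_\Phi$ is preserved. (One could alternatively invoke the fact that any contraction between subspaces of Hilbert spaces extends to a contraction on the whole space, but the projection argument is elementary and self-contained.) That completes the proof.
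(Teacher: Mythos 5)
Your proposal is correct and follows essentially the same route as the paper: admissibility gives $\|B_1w\|_H\le\|B_0w\|_H$ on $\Tau$, which makes $B_0w\mapsto B_1w$ a well-defined linear contraction on $B_0\Tau$, and one then extends it to $\overline{B_0\Tau}$ and composes with the orthogonal projection of $\ran B_0$ onto $\overline{B_0\Tau}$ (using Proposition~\ref{prop:4.6} so that the extension lands in $\ran B_1$ and $\ran B_0$ is a Hilbert space). The only cosmetic difference is the abandoned detour about $B_0(\Tau)=B_0(\Tau\cap\ker B_1)$ at the start, which plays no role in the final argument.
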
	
\begin{proof}
Since $\Tau$ is admissible,
\[    \| B_1 w\|_H\leq \| B_0 w\|_H\mbox{ for all }w\in\Tau .   \]
	Thus, letting 
	\[   {\Phi} _0 B_0 w:=B_1 w\mbox{ for all }w\in \Tau \]
		yields a well-defined mapping ${\Phi} _0:B_0\Tau \to B_1\Tau $. Then ${\Phi} _0$ is linear and contractive. Hence ${\Phi} _0$ has a unique continuous extension ${\Phi} _1:\overline{B_0\Tau }\to \overline{B_1\Tau }$ and $\|{\Phi} _1 x\|_H\leq \|x\|_H$ for all $x\in \overline{B_0 \Tau }$. Since $\ran B_0$ is closed, one has 
		\[  \ran B_0=\overline{B_0 \Tau }\oplus (B_0\Tau )^\perp.\]
Let $x=x_0+x_1\in \ran B_0$ with $x_0\in \overline{B_0\Tau }$, $x_1\in (B_0\Tau )^\perp$. Defining ${\Phi} x:={\Phi} _1x_0$ yields a linear contraction ${\Phi} :\ran B_0\to\ran B_1$ satisfying the requirements of the lemma.  			
\end{proof}	
Now we can describe all maximal admissible subspaces of $W$.  
\begin{thm}\label{th:4.8}
Le ${\Phi} :\ran B_0\to\ran B_1$ linear and contractive (i.e. $\|{\Phi} x\|_H\leq \|x\|_{H} $ for all $x\in\ran B_0$). Then 
\[  \Tau_{\Phi} :=\{ w\in W: B_1 w={\Phi} B_0 w  \} \]
is maximal admissible. Moreover, each maximal admissible subspace of $W$ is of the form $\Tau_{\Phi} $ for some linear contraction ${\Phi} :\ran B_0\to\ran B_1$.  
\end{thm}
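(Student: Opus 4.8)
The plan is to prove the two assertions separately: first that every $\Tau_{\Phi}$ with $\Phi$ a linear contraction $\ran B_0\to\ran B_1$ is maximal admissible, and then that every maximal admissible subspace arises this way.

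For the first assertion I would check the three defining properties. Admissibility condition (a), namely $\r\subset\Tau_{\Phi}$, is immediate from Lemma~\ref{lem:4.3}(b): on $\r$ both $B_0$ and $B_1$ vanish, so the equation $B_1 w=\Phi B_0 w$ holds trivially. Condition (b): for $w\in\Tau_{\Phi}$ we have $b(w,w)=\|B_1 w\|_H^2-\|B_0 w\|_H^2=\|\Phi B_0 w\|_H^2-\|B_0 w\|_H^2\le 0$ since $\Phi$ is contractive. For maximality, suppose $\Tau$ is admissible with $\Tau_{\Phi}\subset\Tau$, and let $w\in\Tau$. I want to show $B_1 w=\Phi B_0 w$. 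The key device is Lemma~\ref{lem:4.5}: given the pair $(B_0 w, \Phi B_0 w)\in\ran B_0\times\ran B_1$ there is $v\in W$ with $B_0 v=B_0 w$ and $B_1 v=\Phi B_0 w$; by construction $v\in\Tau_{\Phi}\subset\Tau$. Then $w-v\in\Tau$ satisfies $B_0(w-v)=0$, so $b(w-v,w-v)=\|B_1(w-v)\|_H^2$; admissibility of $\Tau$ forces this to be $\le 0$, hence $B_1(w-v)=0$, i.e. $B_1 w=B_1 v=\Phi B_0 w$. Thus $w\in\Tau_{\Phi}$ and $\Tau=\Tau_{\Phi}$.

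For the second assertion, let $\Tau$ be maximal admissible. Apply Lemma~\ref{lem:4.7} to obtain a linear contraction $\Phi:\ran B_0\to\ran B_1$ with $\Tau\subset\Tau_{\Phi}$. By the first part $\Tau_{\Phi}$ is admissible, so maximality of $\Tau$ gives $\Tau=\Tau_{\Phi}$, which is what we wanted.

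I expect the only delicate point to be the maximality step, and specifically making sure the "lifting" argument is airtight: it relies crucially on Lemma~\ref{lem:4.5} (which in turn rests on the decomposition $W=\ker B_0+\ker B_1$ from Assumption~\ref{assu:4.1}) and on the closedness of $\ran B_0,\ran B_1$ from Proposition~\ref{prop:4.6}, the latter being needed so that $\Phi B_0 w$ genuinely lies in $\ran B_1$ and Lemma~\ref{lem:4.5} applies. Everything else is a routine verification of the contraction inequality $\|\Phi x\|_H\le\|x\|_H$ translating into $b(w,w)\le 0$.
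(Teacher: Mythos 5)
Your proof is correct, and the admissibility checks (parts a) and b)) coincide with the paper's. The maximality step, however, is genuinely different: the paper invokes Lemma~\ref{lem:4.7} to produce a contraction $\Phi_1$ with $\Tau\subset\Tau_{\Phi_1}$, then uses Lemma~\ref{lem:4.5} to show $B_0\Tau_\Phi=\ran B_0$ and concludes $\Phi=\Phi_1$, hence $\Tau_\Phi\subset\Tau\subset\Tau_{\Phi_1}=\Tau_\Phi$; you instead lift the pair $(B_0w,\Phi B_0w)$ via Lemma~\ref{lem:4.5} to some $v\in\Tau_\Phi\subset\Tau$ and apply the admissibility inequality to $w-v$, which has $B_0(w-v)=0$, forcing $B_1(w-v)=0$. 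Your route is more elementary and self-contained: it bypasses Lemma~\ref{lem:4.7} (and hence Proposition~\ref{prop:4.6}) in the first assertion, while the paper's detour has the side benefit of recording the identity $B_0\Tau_\Phi=\ran B_0$ (its equation (4.4)), which is reused later in Proposition~\ref{prop:4.9} and Theorem~\ref{th:4.12}. For the second assertion both you and the paper argue identically via Lemma~\ref{lem:4.7} plus maximality. One small correction to your closing remark: closedness of $\ran B_0$ and $\ran B_1$ (Proposition~\ref{prop:4.6}) is not needed for your lifting step, since $\Phi$ maps $\ran B_0$ into $\ran B_1$ by hypothesis and Lemma~\ref{lem:4.5} rests only on the decomposition $W=\ker B_0+\ker B_1$; closedness enters only through Lemma~\ref{lem:4.7}, i.e.\ in the second assertion.
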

\begin{proof}
a) Since $B_1r=B_0 r=0$ for all $r\in \r$ by Lemma~\ref{lem:4.3}, one has $\r\subset \Tau_{\Phi} $.\\
b) Let $w\in \Tau_{\Phi} $. Then 
\[b(w,w)=\|B_1 w\|^2_H-\|B_0 w\|^2_H=\|{\Phi} B_0 w\|^2_H-\|B_0w\|^2_H\leq 0.\] 	
c) Let $\Tau $ be admissible and such that $\Tau_{\Phi} \subset \Tau $.  We show that $\Tau_{\Phi} =\Tau $. In fact, by Lemma ~\ref{lem:4.7}, there exists a contraction ${\Phi} _1:\ran B_0\to\ran B_1$ such that $\Tau \subset \Tau_{{\Phi} _1}$. Hence $\Tau_{\Phi} \subset \Tau_{{\Phi} _1} $.  Thus 
\begin{equation}\label{eq:4.3}
{\Phi} B_0w={\Phi} _1B_0w\mbox{ for all }w\in \Tau_{\Phi} .
\end{equation}
But 
\begin{equation}\label{eq:4.4}
B_0 \Tau_{\Phi} =\ran B_0.
\end{equation}
In fact, let $w\in W$. By Lemma~\ref{lem:4.5} there exists $u\in W$ such that $B_0u=B_0 w$ and $B_1 u={\Phi} B_0 w$. Thus  $B_1u={\Phi} B_0u$; i.e. $u\in \Tau_{\Phi} $ and $B_0w=B_0u$. Thus $B_0 w\in B_0\Tau_{\Phi} $. 
Now (\ref{eq:4.3}) implies that ${\Phi} ={\Phi} _1$. hence $\Tau_{\Phi} \subset \Tau \subset \Tau_{{\Phi} _1}=\Tau_{\Phi} $. \\
d) The fact that each maximal subspace of $W$ is of the form $\Tau_{\Phi} $ for some linear contraction ${\Phi} : \ran B_0\to\ran B_1$ follows from Lemma~\ref{lem:4.7}.	
\end{proof}	
Next we compute the initial value condition.
\begin{prop}\label{prop:4.9}
	Let $\Phi:\ran B_0\to\ran B_1$ be a contraction. Then 
\begin{equation}\label{eq:4.5}
\Tau_{\Phi} ^b= \Tau_{{\Phi} ^*}:=\{ w\in W:B_0w={\Phi} ^*B_1w\}.
\end{equation} 
\end{prop}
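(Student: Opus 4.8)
The plan is to prove the two inclusions $\Tau_{\Phi}^b \subseteq \Tau_{\Phi^*}$ and $\Tau_{\Phi^*}\subseteq \Tau_{\Phi}^b$ by unwinding the definition of the $b$-orthogonal space together with the factorization \eqref{eq:4.1} of $b$ through $B_0,B_1$ and the surjectivity statements $B_0\Tau_{\Phi}=\ran B_0$ (which is \eqref{eq:4.4} in the proof of Theorem~\ref{th:4.8}) and the analogous $B_1\Tau_{\Phi}=\ran B_1$. First I would record that $\Phi^*:\ran B_1\to\ran B_0$ makes sense, since $\ran B_0$ and $\ran B_1$ are closed by Proposition~\ref{prop:4.6} and hence are themselves Hilbert spaces; $\Phi^*$ is again a contraction, so $\Tau_{\Phi^*}$ is a bona fide (in fact maximal admissible, by Theorem~\ref{th:4.8}) subspace.

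For the inclusion $\Tau_{\Phi^*}\subseteq \Tau_{\Phi}^b$: let $u\in\Tau_{\Phi^*}$, i.e. $B_0u=\Phi^* B_1 u$, and let $w\in\Tau_{\Phi}$, i.e. $B_1 w=\Phi B_0 w$. Then
\[
b(u,w)=\langle B_1 u,B_1 w\rangle_H-\langle B_0 u,B_0 w\rangle_H
=\langle B_1 u,\Phi B_0 w\rangle_H-\langle \Phi^* B_1 u,B_0 w\rangle_H=0,
\]
by the very definition of the adjoint $\Phi^*$ (both inner products take place in $H$, and $\Phi B_0 w\in\ran B_1\subseteq H$, $B_0 w\in\ran B_0\subseteq H$). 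Hence $u\in\Tau_{\Phi}^b$.

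For the reverse inclusion $\Tau_{\Phi}^b\subseteq\Tau_{\Phi^*}$: let $u\in\Tau_{\Phi}^b$, so $b(u,w)=0$ for all $w\in\Tau_{\Phi}$, which reads
\[
\langle B_1 u,B_1 w\rangle_H=\langle B_0 u,B_0 w\rangle_H=\langle B_0 u,B_0 w\rangle_H\quad\text{for all }w\in\Tau_{\Phi},
\]
and since $B_1 w=\Phi B_0 w$ on $\Tau_{\Phi}$ this becomes $\langle B_1 u,\Phi B_0 w\rangle_H=\langle B_0 u,B_0 w\rangle_H$, i.e. $\langle \Phi^* B_1 u - B_0 u,\,B_0 w\rangle_H=0$ for all $w\in\Tau_{\Phi}$. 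Now invoke \eqref{eq:4.4}: $B_0\Tau_{\Phi}=\ran B_0$, so $\Phi^* B_1 u - B_0 u$ is orthogonal to all of $\ran B_0$. But $\Phi^* B_1 u\in\ran B_0$ (closedness!) and $B_0 u\in\ran B_0$, so the vector $\Phi^* B_1 u - B_0 u$ lies in $\ran B_0$ and is orthogonal to $\ran B_0$, forcing $B_0 u=\Phi^* B_1 u$; that is, $u\in\Tau_{\Phi^*}$.

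The main point to be careful about — and the only place where the special structure is genuinely used — is the appeal to \eqref{eq:4.4}, i.e. that $B_0$ restricted to $\Tau_{\Phi}$ already fills out all of $\ran B_0$; without this one would only get orthogonality against $B_0\Tau_{\Phi}$, which is a priori smaller. This in turn rests on Lemma~\ref{lem:4.5} (simultaneous prescription of $B_0 w$ and $B_1 w$) and ultimately on Assumption~\ref{assu:4.1}(b), $\ker B_0+\ker B_1=W$. Everything else is a routine adjoint manipulation inside $H$, and the closedness of $\ran B_0$, $\ran B_1$ (Proposition~\ref{prop:4.6}) is what lets the final orthogonality argument conclude cleanly.
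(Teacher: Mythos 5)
Your proof is correct and follows essentially the same route as the paper: both directions are obtained by the same adjoint manipulation of $b(u,w)=\langle B_1u,B_1w\rangle_H-\langle B_0u,B_0w\rangle_H$ on $\Tau_\Phi$, with the key step $\Tau_\Phi^b\subseteq\Tau_{\Phi^*}$ resting on $B_0\Tau_\Phi=\ran B_0$ (equation \eqref{eq:4.4}) and on $\Phi^*B_1u-B_0u\in\ran B_0$. Your explicit remarks on the closedness of the ranges (Proposition~\ref{prop:4.6}) making $\Phi^*$ well defined are a welcome, if minor, clarification of what the paper leaves implicit.
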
 
\begin{proof}
Let $u\in \Tau_{\Phi} ^b$.  Then $\langle B_1u, B_1 w\rangle_H=\langle B_0 u,B_0 w\rangle_H$ for all $w\in \Tau_{\Phi} $. Since $B_1 w={\Phi} B_0 w$ for $w\in \Tau_{\Phi} $ it follows that 
\begin{equation}\label{eq:4.6}
\langle {\Phi} ^* B_1 u-B_0 u, B_0 w\rangle_H=0\mbox{ for all }w\in \Tau_{\Phi} .
\end{equation}  
Since by (\ref{eq:4.4}) $B_0\Tau_{\Phi} =\ran B_0$ and since ${\Phi} ^* B_1 u-B_0 u\in \ran B_0$ 
 it follows from (\ref{eq:4.6}) that ${\Phi} ^*B_1 u-B_0u=0$, i.e. $B_0u={\Phi} ^*B_1u$.  	

Conversely, if $B_0u={\Phi} ^*B_1u$, then, for all $w\in \Tau_{\Phi} $, 
\begin{eqnarray*}
	b(u,w) & = & \langle B_1 u, B_1 w\rangle_H-\langle B_0 u, B_0w\rangle_H\\
	 & = & \langle B_1 u, {\Phi} B_0 w\rangle_H -\langle {\Phi} ^*B_1 u, B_0w\rangle_H\\
	  & = & 0.
\end{eqnarray*}
This proves (\ref{eq:4.5}).   
\end{proof}	
Proposition~\ref{prop:4.9} also shows that $\Tau_{\Phi} $ is strongly admissible (without using 
Proposition~\ref{prop:3.7}. In fact, by (\ref{eq:4.5}), if $b(u,w)=0$ for all $w\in \Tau_{\Phi} $, then $B_0 u={\Phi} ^* B_1 u$. Hence
\begin{eqnarray*}
b(u,u) & = & \|B_1 u\|^2_H-\|B_0u\|_H^2\\
 & = &  \|B_1 u\|^2_H-\|{\Phi} ^*B_1u\|_H^2\geq 0.
\end{eqnarray*}
\begin{lem}\label{lem:zbb}
	If $Z$ is maximal admissible, then $Z^{bb}=Z$.  
\end{lem}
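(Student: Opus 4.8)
The plan is to reduce the claim to the classification of maximal admissible subspaces established above. Since $Z$ is maximal admissible, Theorem~\ref{th:4.8} gives a linear contraction $\Phi:\ran B_0\to\ran B_1$ with $Z=\Tau_\Phi=\{w\in W:B_1w=\Phi B_0w\}$, and Proposition~\ref{prop:4.9} identifies the $b$-orthogonal: $Z^b=\Tau_\Phi^{\,b}=\Tau_{\Phi^*}=\{w\in W:B_0w=\Phi^*B_1w\}$. The inclusion $Z\subset Z^{bb}$ being automatic, the whole content is the computation of $(\Tau_{\Phi^*})^b$, which I claim equals $\Tau_\Phi$.

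To get this I would exploit the symmetry recorded in Remark~\ref{rem:5.9n}: $-\mathcal{D}_0$ is again a derivation with the same extended domain $W$ and boundary form $-b$, and Assumption~\ref{assu:4.1} holds for $-b$ with the swapped pair $(\hat B_0,\hat B_1):=(B_1,B_0)$, since $-b(v,w)=\langle\hat B_1v,\hat B_1w\rangle_H-\langle\hat B_0v,\hat B_0w\rangle_H$ and $\ker\hat B_0+\ker\hat B_1=\ker B_1+\ker B_0=W$ by \eqref{eq:4.2}. With respect to this new data, $\ran\hat B_0=\ran B_1$ and $\ran\hat B_1=\ran B_0$ are still closed by Proposition~\ref{prop:4.6}, hence genuine Hilbert spaces on which adjoints behave; moreover $\Phi^*:\ran B_1\to\ran B_0$ is a contraction and $\Tau_{\Phi^*}=\{w\in W:\hat B_1w=\Phi^*\hat B_0w\}$ is precisely the maximal admissible subspace attached to $\Phi^*$ in the sense of Theorem~\ref{th:4.8} for $-\mathcal{D}_0$. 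Applying Proposition~\ref{prop:4.9} to $-\mathcal{D}_0$ then yields that the $(-b)$-orthogonal of $\Tau_{\Phi^*}$ is $\{w\in W:\hat B_0w=(\Phi^*)^*\hat B_1w\}=\{w\in W:B_1w=\Phi B_0w\}=\Tau_\Phi$, using $(\Phi^*)^*=\Phi$ (legitimate precisely because the ranges are closed). Finally, for any subspace $E\subset W$ one has $E^b=E^{-b}$, since $b(u,v)=0\iff-b(u,v)=0$; hence $(\Tau_{\Phi^*})^b=(\Tau_{\Phi^*})^{-b}=\Tau_\Phi$, and therefore $Z^{bb}=(Z^b)^b=(\Tau_{\Phi^*})^b=\Tau_\Phi=Z$.

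As an alternative to invoking the $-\mathcal{D}_0$ symmetry, one can compute $(\Tau_{\Phi^*})^b$ directly by a verbatim repetition of the proof of Proposition~\ref{prop:4.9} with the roles of $B_0$ and $B_1$ interchanged: for $u\in(\Tau_{\Phi^*})^b$ and $w\in\Tau_{\Phi^*}$ one rewrites $0=\langle B_1u,B_1w\rangle_H-\langle B_0u,B_0w\rangle_H=\langle B_1u-\Phi B_0u,B_1w\rangle_H$, and one needs $B_1\Tau_{\Phi^*}=\ran B_1$ — which follows from Lemma~\ref{lem:4.5}, exactly as \eqref{eq:4.4} was derived — to conclude $B_1u=\Phi B_0u$; the converse inclusion is the same one-line identity as in Proposition~\ref{prop:4.9}. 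I expect the only genuinely delicate point in either approach to be the bookkeeping around adjoints: one must be sure that $\Phi^*$ is taken between the \emph{closed} subspaces $\ran B_1$ and $\ran B_0$ (so that $(\Phi^*)^*=\Phi$), which is where Proposition~\ref{prop:4.6} is essential; everything else is routine.
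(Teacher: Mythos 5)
Your proof is correct and follows essentially the same route as the paper: write $Z=\Tau_\Phi$ via Theorem~\ref{th:4.8}, then apply Proposition~\ref{prop:4.9} twice to get $Z^{bb}=\Tau_{\Phi^*}^b=\Tau_\Phi$. The only difference is that you carefully justify the second application (via the $-\mathcal{D}_0$ symmetry with $B_0,B_1$ swapped, or by repeating the argument of Proposition~\ref{prop:4.9}), a step the paper's two-line proof leaves implicit.
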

\begin{proof}
	By Theorem~\ref{th:4.8}, any maximal admissible subspace of $W$  is of the form $\Tau_{\Phi} $ for some linear contraction ${\Phi} :\ran B_0\to\ran B_1$.  By Proposition~\ref{prop:4.9}, we have $\Tau_\Phi^b=\Tau_{\Phi^*}$ and therefore \[\Tau_\Phi^{bb}=\Tau_{\Phi^*}^b=\Tau_\Phi.\] 	
\end{proof}	
\subsection{Boundary conditions}
Next we compare maximal and strongly admissible subspaces. We already know that maximal  admissible subspaces are strongly admissible. Recall from Theorem~\ref{th:4.8} that the maximal admissible subspaces of $W$ are exactly the spaces $\Tau_\Phi$ where $\Phi:\ran B_0\to \ran B_1$ is a contraction.
\begin{prop}\label{prop:5.13n}
Let $\Tau\subset W$ be strongly admissible. Then there exists exactly one maximal admissible subspace $\Tau_\Phi$ such that $\Tau\subset \Tau_\Phi$. Moreover,
\begin{equation}\label{eq:5.7n}
\Tau^b =\Tau_{\Phi^\star}.
\end{equation}   
\end{prop}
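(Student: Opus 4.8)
The plan is to exploit Lemma~\ref{lem:4.7}, which already produces a contraction $\Phi:\ran B_0\to\ran B_1$ with $\Tau\subset\Tau_\Phi$, and then prove \emph{uniqueness} of such a $\Phi$ together with the identity~\eqref{eq:5.7n}. Existence of at least one containing maximal admissible subspace is thus immediate; the work is in showing there is only one, and in identifying $\Tau^b$.

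For uniqueness I would argue as follows. Suppose $\Tau\subset\Tau_{\Phi_1}$ and $\Tau\subset\Tau_{\Phi_2}$ with $\Phi_1,\Phi_2$ contractions $\ran B_0\to\ran B_1$. Then for every $w\in\Tau$ we have $\Phi_1 B_0 w=B_1 w=\Phi_2 B_0 w$, so $\Phi_1$ and $\Phi_2$ agree on $B_0\Tau$, hence (by continuity, both being contractions and $\ran B_0$ closed by Proposition~\ref{prop:4.6}) on $\overline{B_0\Tau}$. To conclude $\Phi_1=\Phi_2$ on all of $\ran B_0$ I need $\overline{B_0\Tau}=\ran B_0$; this is where strong admissibility must enter, since for a merely admissible $\Tau$ it can fail. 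The key point is that strong admissibility forces $\Tau$ to ``fill up'' $\ran B_0$ in the following sense: if $x\in(B_0\Tau)^\perp\cap\ran B_0$, pick (via Lemma~\ref{lem:4.5}) a vector $u\in W$ with $B_0 u=x$ and $B_1 u=0$; then for all $w\in\Tau$, $b(u,w)=\langle B_1 u,B_1 w\rangle_H-\langle B_0 u,B_0 w\rangle_H=-\langle x,B_0 w\rangle_H=0$, so $u\in\Tau^b$, and strong admissibility gives $0\le b(u,u)=\|B_1 u\|_H^2-\|B_0 u\|_H^2=-\|x\|_H^2$, forcing $x=0$. Hence $\overline{B_0\Tau}=\ran B_0$, the two contractions coincide, and $\Phi$ is unique.

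For the identity $\Tau^b=\Tau_{\Phi^\star}$: the inclusion $\Tau_{\Phi^\star}\subset\Tau^b$ is the routine computation already carried out in the proof of Proposition~\ref{prop:4.9}, using only $\Tau\subset\Tau_\Phi$ and $B_1 w=\Phi B_0 w$ for $w\in\Tau$ — for $u$ with $B_0 u=\Phi^\star B_1 u$ one gets $b(u,w)=\langle B_1 u,\Phi B_0 w\rangle_H-\langle\Phi^\star B_1 u,B_0 w\rangle_H=0$. For the reverse inclusion $\Tau^b\subset\Tau_{\Phi^\star}$: let $u\in\Tau^b$. Then $\langle B_1 u,B_1 w\rangle_H=\langle B_0 u,B_0 w\rangle_H$ for all $w\in\Tau$, i.e. $\langle\Phi^\star B_1 u-B_0 u,\,B_0 w\rangle_H=0$ for all $w\in\Tau$, so $\Phi^\star B_1 u-B_0 u\in(B_0\Tau)^\perp$. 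But $\Phi^\star B_1 u-B_0 u\in\ran B_0$ (closed, by Proposition~\ref{prop:4.6}), and we just showed $(B_0\Tau)^\perp\cap\ran B_0=\{0\}$ under strong admissibility; hence $B_0 u=\Phi^\star B_1 u$, i.e. $u\in\Tau_{\Phi^\star}$.

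I expect the main obstacle to be precisely the density step $\overline{B_0\Tau}=\ran B_0$: it is the only place the hypothesis ``strongly admissible'' (rather than merely ``admissible'') is genuinely used, and getting the orthogonality argument right depends on having Lemma~\ref{lem:4.5} and the closedness of $\ran B_0$ (Proposition~\ref{prop:4.6}) available. Once that density is in hand, both uniqueness and~\eqref{eq:5.7n} follow by short Hilbert-space computations. One should also double-check that $\Phi^\star$ is indeed a contraction $\ran B_1\to\ran B_0$ so that $\Tau_{\Phi^\star}$ is itself maximal admissible by Theorem~\ref{th:4.8}; this is automatic since $\|\Phi\|\le 1$ implies $\|\Phi^\star\|\le 1$, and $\Phi^\star$ maps into $\ran B_0=\overline{\ran B_0}=(\ker B_0)^\perp$.
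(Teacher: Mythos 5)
Your proof is correct, but it follows a genuinely different route from the paper's. The paper first establishes the identity $\Tau^b=\Tau_{\Phi^\star}$ by a duality/maximality argument: it observes that $-\mathcal D$ is a derivation whose boundary form is $-b$, so that $\Tau_{\Phi^\star}$ is \emph{maximal} admissible for $-b$ (Theorem~\ref{th:4.8} with the roles of $B_0,B_1$ exchanged), notes that strong admissibility of $\Tau$ makes $\Tau^b$ admissible for $-b$, and concludes from $\Tau_{\Phi^\star}=\Tau_\Phi^b\subset\Tau^b$ that equality holds; uniqueness of $\Phi$ is then deduced from $\Tau_{\Psi^\star}=\Tau^b=\Tau_{\Phi^\star}$ together with Lemma~\ref{lem:4.5}. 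You instead isolate the explicit fact that strong admissibility forces $(B_0\Tau)^\perp\cap\ran B_0=\{0\}$, i.e. $\overline{B_0\Tau}=\ran B_0$ (using Lemma~\ref{lem:4.5} with $x_1=0$ and the closedness of $\ran B_0$ from Proposition~\ref{prop:4.6}), and then both the uniqueness of $\Phi$ and the inclusion $\Tau^b\subset\Tau_{\Phi^\star}$ drop out by direct Hilbert-space computations, the reverse inclusion being the calculation of Proposition~\ref{prop:4.9}. Your argument buys a concrete structural statement (density of $B_0\Tau$ in $\ran B_0$ for strongly admissible $\Tau$) and avoids invoking the classification of maximal admissible subspaces for $-b$, while the paper's argument recycles already proved results with minimal computation. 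One small slip in your closing aside: the identity ``$\ran B_0=(\ker B_0)^\perp$'' does not typecheck ($\ker B_0\subset W$ while $\ran B_0\subset H$; the relevant fact is simply that $\Phi^\star$, as the adjoint of a contraction between the closed subspaces $\ran B_0$ and $\ran B_1$ of $H$, is a contraction from $\ran B_1$ into $\ran B_0$); this aside is not needed for your proof, so nothing is affected.
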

\begin{proof}
Let $\Phi:\ran B_0\to \ran B_1$ be  contraction such that $\Tau\subset \Tau_\Phi$. Then 
\[   \Tau_{\Phi^*} =\Tau_\Phi^b\subset \Tau^b,  \]
where $\Tau_{\Phi^*}=\{  w\in W: B_0w=\Phi^* B_1w \}$. Note also that $-{\mathcal D}$ is a derivation with extended space $W$ and associated with $-b$. Thus, by Theorem~\ref{th:4.8}, $\Tau_{\Phi^*}$ is a maximal admissible subspace for $-b$. Since $\Tau$ is strongly admissible, the space $\Tau^b$ is admissible for $-{\mathcal D}$. Since $\Tau_{\phi^*}\subset \Tau^b$, it follows that $\Tau_{\Phi^*}=\Tau^b$. We have shown the second assertion.

Newt we show uniqueness. Let $\Psi: \ran B_0\to \ran B_1$ be another contraction such that $\Tau\subset \Tau_\Psi$. Then 
\[   \Tau_{\Psi^*}=\Tau^b=\Tau_{\Phi^*}\]
by what we have just proved. We show that $\Psi^*=\Phi^*$ (which implies that $\Psi=\Phi$). Let $w_1\in\ran B_1$, $w_0=\Phi^* w_1\in \ran B_0$. By Lemma~\ref{lem:4.5}, there exists $w\in W$ such that $w_1=B_1w$ and $w_0=B_0w$. Thus 
\[\Phi^*B_1w=\Phi^* w_1 =w_0=B_0w.\]
Hence $w\in \Tau_{\Phi^*}=\Tau_{\Psi^*}$. Consequently,
\[ \Psi^* w_1=\Psi^* B_1 w=B_0 w=w_0=\Phi^* w_1.  \]         	
\end{proof}	
An obvious  corollary is the following.  
\begin{cor}\label{cor:5.15n}
Let $\Phi,\Psi\in {\mathcal L}(H)$ be contractions. If $\Tau_\Phi=\Tau_{\Psi}$ then $\Phi=\Psi$. 
\end{cor}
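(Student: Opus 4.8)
The plan is to deduce Corollary~\ref{cor:5.15n} directly from Proposition~\ref{prop:5.13n}, or rather from the machinery developed for it, by a short argument. Since every maximal admissible subspace is in particular strongly admissible (Proposition~\ref{prop:3.7}, or Proposition~\ref{prop:4.9}), if $\Tau_\Phi = \Tau_\Psi$ then $\Tau_\Phi$ is a strongly admissible subspace contained in both the maximal admissible subspace $\Tau_\Phi$ and the maximal admissible subspace $\Tau_\Psi$. The uniqueness part of Proposition~\ref{prop:5.13n} — there is \emph{exactly one} maximal admissible subspace containing a given strongly admissible subspace — would then force $\Tau_\Phi$ and $\Tau_\Psi$ to be the same as abstract subspaces (which we already knew), but more usefully its proof shows $\Phi^\star = \Psi^\star$ and hence $\Phi = \Psi$. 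So the cleanest route is: apply Proposition~\ref{prop:5.13n} with the strongly admissible subspace $\Tau := \Tau_\Phi = \Tau_\Psi$; both $\Phi$ and $\Psi$ are contractions with $\Tau \subset \Tau_\Phi$ and $\Tau \subset \Tau_\Psi$; the uniqueness clause (spelled out at the end of that proof) yields $\Phi^\star = \Psi^\star$, whence $\Phi = \Psi$.

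Alternatively — and this is perhaps the more transparent self-contained argument — I would reprove the key identity directly. First I would record that $B_0 \Tau_\Phi = \ran B_0$ for any contraction $\Phi : \ran B_0 \to \ran B_1$; this is exactly equation~\eqref{eq:4.4} in the proof of Theorem~\ref{th:4.8}, and it rests on Lemma~\ref{lem:4.5}. Then, given $w \in W$, pick (via Lemma~\ref{lem:4.5}) an element $u \in W$ with $B_0 u = B_0 w$ and $B_1 u = \Phi B_0 w$; then $u \in \Tau_\Phi = \Tau_\Psi$, so $B_1 u = \Psi B_0 u = \Psi B_0 w$ as well, giving $\Phi B_0 w = \Psi B_0 w$. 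Since $w \in W$ was arbitrary and $B_0 W = \ran B_0$ is the whole domain of both $\Phi$ and $\Psi$, we conclude $\Phi = \Psi$ on $\ran B_0$. As $\Phi, \Psi \in \mathcal L(H)$ and the statement only asserts equality as maps $\ran B_0 \to \ran B_1$ (the part of $H$ on which $\Tau_\Phi$ sees them), this finishes the proof; if one wants equality on all of $H$ one needs the additional convention — implicit in the statement of Theorem~\ref{th:4.8} and Proposition~\ref{prop:4.9} — that such $\Phi$ are taken to vanish on $(\ran B_0)^\perp$, which I would note.

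I expect no serious obstacle here: Corollary~\ref{cor:5.15n} is labelled "an obvious corollary" and indeed it is, once Theorem~\ref{th:4.8} and Lemma~\ref{lem:4.5} are in hand. The only point requiring a moment's care is the domain bookkeeping — making sure that "$\Phi = \Psi$" is asserted and proved on the right subspace $\ran B_0$ of $H$ (on which both contractions are genuinely determined by $\Tau_\Phi$), and that the reflexive/closed-range facts about $B_0$ (Proposition~\ref{prop:4.6}) are invoked where the orthogonal decomposition $H = \ran B_0 \oplus (\ran B_0)^\perp$ is used. I would write the proof in two or three lines: apply Lemma~\ref{lem:4.5} to realize $B_0 \Tau_\Phi = \ran B_0$, then run the one-line equality-of-images argument above, citing Proposition~\ref{prop:5.13n} as the conceptual home of the statement.
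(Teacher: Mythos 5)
Your first route is exactly the paper's (implicit) argument: the paper states this as an immediate consequence of Proposition~\ref{prop:5.13n}, whose uniqueness proof shows $\Phi^*=\Psi^*$ for any two contractions whose graphs contain a common strongly admissible subspace, and taking $\Tau=\Tau_\Phi=\Tau_\Psi$ (strongly admissible by Proposition~\ref{prop:3.7}) gives $\Phi=\Psi$. Your self-contained alternative via Lemma~\ref{lem:4.5} and the identity $B_0\Tau_\Phi=\ran B_0$ is also correct, and your remark that equality is only forced on $\ran B_0$ (so the statement for $\Phi,\Psi\in{\mathcal L}(H)$ tacitly assumes they are determined by their restriction to $\ran B_0$, or that $\ran B_0=H$ as in the application of Section~\ref{sec:appli}) is an apt observation.
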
 

Thus, in view of Proposition~\ref{prop:5.3n}, the solutions of  (WDP) with respect to $\Tau$ and to $\Tau_\Phi$ are the same.  \\

Now in view of Theorem~\ref{th:4.8} we obtain from Theorem~\ref{th:3.5new} a theorem on existence and uniqueness of the solution to a Derivation Problem. 

Let ${\Phi} :\ran B_0\to\ran B_1$ be linear such that $\|{\Phi} \|\leq 1$. Consider the strongly admissible space 
\[   \Tau_{\Phi} =\{w\in W: B_1w={\Phi} B_0 w\}.\]
\begin{thm}\label{th:4.12}
	Let ${\Phi} :\ran B_0\to \ran B_1$ be linear and contractive for the norm of $H$. Let ${\mathcal A}\in{\mathcal L}(V,V')$ be coercive, $f\in V'$ and let $y_0\in \ran  B_0$. Then there exists a  unique $u\in W$ such that 
	 \[  {\mathcal D}u +{\mathcal A}u =f \mbox{ and }B_0 u-{\Phi} ^*B_1 u=y_0.   \]
\end{thm}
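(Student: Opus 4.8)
The plan is to reduce Theorem~\ref{th:4.12} to Theorem~\ref{th:3.9} (the solvability of the Strong Derivation Problem) by exhibiting a strongly admissible subspace $Z\subset W$ and a functional $g\in Z'$ vanishing on $\r$ whose associated boundary condition $b(u,z)=-\langle g,z\rangle$ reproduces exactly the prescribed condition $B_0u-\Phi^*B_1u=y_0$. The natural candidate is
\[
 Z:=\Tau_{\Phi}=\{w\in W:B_1w=\Phi B_0w\},
\]
which by Theorem~\ref{th:4.8} is maximal admissible, hence strongly admissible by Proposition~\ref{prop:3.7}, and which satisfies $\Tau_\Phi^b=\Tau_{\Phi^*}$ by Proposition~\ref{prop:4.9}. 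The first step is therefore to define, for the given $y_0\in\ran B_0$, the antilinear form $g$ on $Z$ by
\[
 \langle g,z\rangle_{Z',Z}:=\langle y_0,B_0 z\rangle_H,\qquad z\in Z.
\]
Since $B_0\in\mathcal L(W,H)$ and $Z$ carries the $W$-norm, $g$ is continuous on $Z$; and by Lemma~\ref{lem:4.3}b) one has $\r\subset\ker B_0\cap\ker B_1$, so $\langle g,r\rangle=0$ for all $r\in\r$. Thus $L:=f+g$ is admissible for (SDP) with respect to $Z$, and Theorem~\ref{th:3.9} provides a unique $u\in W$ with ${\mathcal D}u+{\mathcal A}u=f$ in $V'$ and $b(u,z)=-\langle g,z\rangle_{Z',Z}$ for all $z\in Z$.

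The second step is to translate the identity $b(u,z)=-\langle y_0,B_0z\rangle_H$ for all $z\in\Tau_\Phi$ into the pointwise boundary condition. Using \eqref{eq:4.1} and $B_1z=\Phi B_0z$ on $\Tau_\Phi$, one has for every $z\in\Tau_\Phi$
\[
 b(u,z)=\langle B_1u,\Phi B_0z\rangle_H-\langle B_0u,B_0z\rangle_H=\langle \Phi^*B_1u-B_0u,\,B_0z\rangle_H,
\]
so the equation becomes $\langle \Phi^*B_1u-B_0u+y_0,\ B_0z\rangle_H=0$ for all $z\in\Tau_\Phi$. Now invoke \eqref{eq:4.4}, namely $B_0\Tau_\Phi=\ran B_0$, together with the fact that $\Phi^*B_1u-B_0u+y_0\in\ran B_0$ (here $\ran B_0$ is closed by Proposition~\ref{prop:4.6} and $\ran\Phi^*\subset\overline{\ran B_0}=\ran B_0$); this forces $\Phi^*B_1u-B_0u+y_0=0$, i.e. $B_0u-\Phi^*B_1u=y_0$. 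Conversely, if $u\in W$ satisfies ${\mathcal D}u+{\mathcal A}u=f$ and $B_0u-\Phi^*B_1u=y_0$, then reversing this computation shows $b(u,z)=-\langle g,z\rangle$ for all $z\in\Tau_\Phi$, so $u$ solves (SDP) with data $L=f+g$; hence existence and uniqueness of $u$ in Theorem~\ref{th:4.12} follow from the existence and uniqueness part of Theorem~\ref{th:3.9}, strong admissibility of $\Tau_\Phi$ being exactly what guarantees uniqueness.

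The main obstacle is the surjectivity-type argument in the second step: one must be sure that the bracket $\langle \Phi^*B_1u-B_0u+y_0,B_0z\rangle_H=0$ for all $z\in\Tau_\Phi$ really does annihilate the element, which requires both that $B_0\Tau_\Phi$ exhausts all of $\ran B_0$ (Lemma~\ref{lem:4.5} and \eqref{eq:4.4}) and that the element in question lies in $\ran B_0$ rather than merely in $H$; the latter uses the closedness of $\ran B_0$ (Proposition~\ref{prop:4.6}) and the inclusion $\ran\Phi^*\subset\overline{\ran B_1^*}$... more precisely $\Phi^*$ maps into $\ran B_1\subset H$ and one needs $\ran\Phi^*\subset\ran B_0$ after identifying the codomain, which is built into the hypothesis that $\Phi:\ran B_0\to\ran B_1$ and hence $\Phi^*:\ran B_1\to\ran B_0$. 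Everything else is a direct bookkeeping of the duality pairings and an appeal to the already-established well-posedness of (SDP).
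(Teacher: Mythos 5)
Your proposal is correct and takes essentially the same route as the paper: choose $Z=\Tau_\Phi$, define $\langle g,z\rangle=\langle y_0,B_0z\rangle_H$ (which vanishes on $\r$ by Lemma~\ref{lem:4.3}), apply Theorem~\ref{th:3.9}, and recover $B_0u-\Phi^*B_1u=y_0$ from $B_0\Tau_\Phi=\ran B_0$ together with the fact that $\Phi^*B_1u-B_0u+y_0\in\ran B_0$. The only difference is cosmetic: you make explicit the converse translation (a solution of the stated boundary problem solves (SDP)), which the paper leaves implicit when invoking uniqueness from Theorem~\ref{th:3.9}.
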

\begin{proof}
Let $Z = \Tau_\Phi$ and let us define $g\in Z'$ by $\langle g,z\rangle_{Z',Z}= \langle y_0,B_0 z\rangle_H$. Then $g$ is such  that $\langle g,r\rangle_{Z',Z}=0$ for all $r\in R$. By Theorem \ref{th:3.9}, there exists $u$ such that (SDP)  holds, that is
 \[
   u\in W , \,\, {\mathcal D}u +{\mathcal A}u =f\mbox{ and }b(u,z)=-\langle g,z\rangle_{Z',Z},\,\, z\in Z.
   \]
Since $\langle B_1 u,B_1 z\rangle_H=\langle B_1 u,\Phi B_0 z\rangle_H=\langle {\Phi}^*B_1 u,B_0 z\rangle_H$, it follows that
\[
-b(u,z)- \langle g,z\rangle_{Z',Z}= \langle B_0 u-{\Phi} ^*B_1 u -y_0,B_0 z\rangle_H=0,\,\, z\in \Tau_\Phi.
\]
This yields $B_0 u-{\Phi} ^*B_1 u -y_0 =0$, since, by Lemma \ref{lem:4.5}, $\{B_0 z, z\in \Tau_\Phi\} =\ran B_0$.

Theorem \ref{th:3.9} also shows the uniqueness of $u$ since $\Tau_\Phi$ is strongly admissible.

\end{proof}

\section{Non-autonomous evolution equations}\label{sec:appli}
In this section we establish existence and uniqueness for an evolutionary problem with very general boundary conditions concerning the time variable which can prescribe as well an initial value problem or a  periodicity condition.

Let $T>0$, ${U}$ a Hilbert space over $\K=\R$ or $\C$ which is continuous and densely embedded into another Hilbert space $H$, i.e. ${U}\stackrel{d}{\hookrightarrow} H$. As usual we identify $H$ with a subspace of ${U}'$ which yields the Gelfand triple
\[   {U}\stackrel{d}{\hookrightarrow} H\hookrightarrow {U}',\]
where for $u\in {U}$ and $ f\in H$, $\langle f,u\rangle_{{U}', {U}} =\langle f,u\rangle_H$. 

Let $V=L^2(0,T;{U})$. Thus $V'=L^2(0,T;{U}')$. Let $\r={\mathcal C}_c^\infty(0,T;{U})$ and define  ${\mathcal D}: \r\to V'$ by ${\mathcal D}r=r'$.  Then for 
$r_1,r_2\in \r$, 
\[    \langle   {\mathcal D} r_1,r_2\rangle_{V',V}  + \overline{ \langle {\mathcal D} r_2,r_1\rangle_{V',V}    }=\int_0^T ( \langle r'_1(t),r_2(t)\rangle_H  + \overline{\langle r'_2(t),r_1(t)\rangle_H   })dt =0.\] 
It is not difficult to see that $W$ as defined in Section~\ref{sec:3} is given by 
\[  W=H^1(0,T;{U}')\cap L^2(0,T;{U}).\]
The following integration by parts formula (\cite[III Corollary 1.1 p.106]{Sho97}) plays an important role. 
\begin{lem}\label{lem:5.1}  
One has $W\subset {\mathcal C}([0,T],H)$ and for all $v,w\in W,$ 
\[  \int_0^T\langle v'(t),w(t)\rangle dt =-\int_0^T  \langle w'(t),v(t)\rangle dt  + \langle v(T),w(T)\rangle_H-\langle v(0),w(0)\rangle_H .  \] 
\end{lem}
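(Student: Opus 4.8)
\textbf{Plan for the proof of Lemma~\ref{lem:5.1}.}
The plan is to establish the statement first on a dense subspace of $W$ where the computations are classical, and then pass to the limit using the continuity of both sides in the $W$-norm. The natural dense subspace is $\mathcal{C}^1([0,T];U)$ (or even $\mathcal{C}^\infty_c(\R;U)$ restricted to $[0,T]$, together with enough boundary-value flexibility): for $v,w$ in this class, both $v',w'$ are continuous $U$-valued functions, the map $t\mapsto \langle v(t),w(t)\rangle_H$ is continuously differentiable on $[0,T]$ with derivative $\langle v'(t),w(t)\rangle_H+\langle v(t),w'(t)\rangle_H=\langle v'(t),w(t)\rangle + \overline{\langle w'(t),v(t)\rangle}$ (here using that on $U$-valued functions the duality pairing restricts to the $H$-inner product), and the fundamental theorem of calculus gives the identity directly. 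I would first record that $\mathcal{C}^1([0,T];U)$ is dense in $W=H^1(0,T;U')\cap L^2(0,T;U)$ equipped with the norm $\|v\|_W^2=\|v\|_{L^2(0,T;U)}^2+\|v'\|_{L^2(0,T;U')}^2$; this is a standard mollification/truncation argument (mollify in time after extending, noting that a $U$-valued $L^2$ function mollifies to a $U$-valued smooth function whose time derivative is controlled in $U'$).

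Next I would prove the embedding $W\hookrightarrow \mathcal{C}([0,T],H)$ and the continuity of the right-hand side. For $v\in \mathcal{C}^1([0,T];U)$ one has, for any $s,t$,
\[
\|v(t)\|_H^2-\|v(s)\|_H^2=\int_s^t\big(\langle v'(\tau),v(\tau)\rangle+\overline{\langle v'(\tau),v(\tau)\rangle}\big)\,d\tau \le 2\int_0^T \|v'(\tau)\|_{U'}\|v(\tau)\|_U\,d\tau\le \|v\|_W^2,
\]
which together with $\frac1T\int_0^T\|v(s)\|_H^2\,ds\le C\|v\|_W^2$ (using $H\hookrightarrow U'$ on the $L^2$ part is not quite enough — one uses $U\hookrightarrow H$ to bound $\|v(s)\|_H\le c\|v(s)\|_U$ and integrates) yields $\sup_t\|v(t)\|_H^2\le C\|v\|_W^2$. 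Hence the map $v\mapsto v$ extends continuously from the dense subspace to a bounded map $W\to\mathcal{C}([0,T],H)$, so every $v\in W$ has a continuous $H$-valued representative and in particular $v(0),v(T)$ make sense and depend continuously on $v$ in the $W$-norm. The bilinear forms $(v,w)\mapsto\int_0^T\langle v'(t),w(t)\rangle\,dt$ and $(v,w)\mapsto\int_0^T\langle w'(t),v(t)\rangle\,dt$ are each continuous on $W\times W$ (Cauchy--Schwarz with the $U'$--$U$ pairing), and $(v,w)\mapsto\langle v(T),w(T)\rangle_H-\langle v(0),w(0)\rangle_H$ is continuous on $W\times W$ by the embedding just proved.

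Finally, since the identity holds for $(v,w)\in \mathcal{C}^1([0,T];U)\times \mathcal{C}^1([0,T];U)$ and all four terms are continuous bilinear forms on $W\times W$, while $\mathcal{C}^1([0,T];U)$ is dense in $W$, the identity extends to all $v,w\in W$ by bilinear continuity: approximate $v$ by $v_n$ and $w$ by $w_n$ in $\mathcal{C}^1([0,T];U)$, write the identity for $(v_n,w_n)$, and let $n\to\infty$. I expect the main obstacle to be the density statement $\overline{\mathcal{C}^1([0,T];U)}=W$ in the graph norm: one must check that mollification in the time variable commutes appropriately with the two different norms ($L^2(0,T;U)$ for the function and $L^2(0,T;U')$ for its derivative) and handle the boundary behaviour at $t=0,T$ via a reflection or extension argument so that the mollified functions are genuinely smooth up to the endpoints. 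Alternatively, and perhaps more cleanly, one may simply cite \cite[III Corollary 1.1 p.106]{Sho97} for both the embedding and the integration-by-parts formula, as the statement of the lemma already does, and only sketch the density/approximation mechanism for completeness.
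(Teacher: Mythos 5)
Your argument is correct and is essentially the standard proof behind the citation the paper itself relies on: the paper offers no proof of Lemma~\ref{lem:5.1}, referring instead to \cite[III Corollary 1.1 p.106]{Sho97}, whose proof is exactly your scheme (establish the identity and the bound $\sup_t\|v(t)\|_H^2\le C\|v\|_W^2$ for $v,w\in\mathcal{C}^1([0,T];U)$, then extend using the continuity of all four bilinear terms in the $W$-norm). In a full write-up you would only need to spell out the density of $\mathcal{C}^1([0,T];U)$ in $W$ (extension/reflection plus mollification, as you indicate) and the small remark that the continuous $H$-valued limit coincides a.e.\ with $v$, which follows since convergence in $W$ implies convergence in $L^2(0,T;H)$.
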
 
Note that here $\langle {v'}(t),w(t)\rangle :=\langle {v'}(t),w(t)\rangle_{{U}',{U}}$. 

Thus, in this situation, the associated boundary form $b:W\times W\to\K$ is given by 
 \[ b(v,w)=   \langle v(T),w(T)\rangle_H- \langle v(0),w(0)\rangle_H.\]
 Thus it is possible to define $B_1, B_0:W\to H$  by $B_0w=w(0)$ and $B_1w=w(T)$.  Indeed, we have that for all 
 $u\in W,  u(t)=\frac{t}{T}u(t)+\frac{T-t}{T}u(t)$ with $(t\mapsto \frac{t}{T}u(t))\in \ker B_0$ and 
 $(t\mapsto\frac{T-t}{T} u(t)) \in \ker B_1$. So we have $W=\ker B_0 +\ker B_1$ where $B_0$ and $B_1$ are selfadjoint.    
 
 Now let ${\mathcal A}\in {\mathcal L}(V,V')$ be coercive. 
 \begin{exam}\label{ex:5.2}
 Let $a:[0,T]\times {U}\times {U}\to \K$ be a function such that 
 \begin{itemize}
 	\item[a)] $a(t,\cdot,\cdot):{U}\times {U}\to\K$ is sesquilinear;
 	\item[b)] $|a(t,v,w)|\leq c\|v\|_{{U} }\|w\|_{{U}}$ for all $t\in[0,T]$ and for all $v,w\in  {U}$; 
 	\item[c)] $a(\cdot, v,w):[0,T]\to\K$ is measurable for all $v,w\in {U}$;
 	\item[d)] $\re a(t,v,v)\geq \alpha \|v\|_{{U}}^2$ for all $t\in [0,T]$, $v\in {U}$ and some $\alpha>0$.   
 \end{itemize} 
For $v,w\in V=L^2(0,T;{U})$, let 
\[ \langle {\mathcal A}v,w\rangle=\int_0^Ta(t,u(t),v(t))dt. \]
Then ${\mathcal A}\in {\mathcal L}(V,V')$ is coercive. 
 \end{exam}
In the following it is important to know the trace space of $W$. 
\begin{lem}\label{lem:5.2n}
For all $x\in H$ there exists $u\in W$ such that $u(0)=x$.  
\end{lem}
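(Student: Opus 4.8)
The plan is to construct the lifting $u$ explicitly using the analytic semigroup generated by the operator associated with the Hilbert-space norm of $U$. Let $A$ denote the self-adjoint operator on $H$ associated, in the sense of Lions, with the continuous coercive form $(v,w)\mapsto\langle v,w\rangle_U$ on $U$; thus $A$ is positive and invertible, $\langle Av,w\rangle_H=\langle v,w\rangle_U$ for $v\in\dom(A)$ and $w\in U$, its form domain is $\dom(A^{1/2})=U$, and $\|A^{1/2}v\|_H=\|v\|_U$ for all $v\in U$. Then $-A$ generates a bounded analytic $C_0$-semigroup $(e^{-tA})_{t\ge0}$ on $H$, and for $x\in H$ I would take
\[ u(t):=e^{-tA}x,\qquad t\in[0,T]. \]

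First, $u\in C([0,T],H)$ with $u(0)=x$, so that, once $u\in W$ has been established, the trace of $u$ at $0$ in the sense of the embedding $W\subset C([0,T],H)$ of Lemma~\ref{lem:5.1} is indeed $x$. Next, the parabolic smoothing of the analytic semigroup gives $u(t)\in\dom(A)\subset U$ for every $t>0$, with $u'(t)=-Au(t)$, and the required $L^2$-bounds follow from the spectral theorem: writing $A=\int_{[\alpha,\infty)}\lambda\,dE_\lambda$ with $\alpha>0$ and using $\int_0^\infty\lambda e^{-2t\lambda}\,dt=\tfrac12$ for $\lambda>0$,
\[ \int_0^T\|u(t)\|_U^2\,dt=\int_0^T\|A^{1/2}e^{-tA}x\|_H^2\,dt\le\int_{[\alpha,\infty)}\Big(\int_0^\infty\lambda e^{-2t\lambda}\,dt\Big)\,d\|E_\lambda x\|_H^2=\tfrac12\|x\|_H^2, \]
hence $u\in L^2(0,T;U)=V$, and in particular $u\in L^2(0,T;U')$ since $U\hookrightarrow U'$ continuously. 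For the derivative, for $t>0$ and $w\in U=\dom(A^{1/2})$ one computes $\langle u'(t),w\rangle_{U',U}=-\langle Ae^{-tA}x,w\rangle_H=-\langle A^{1/2}e^{-tA}x,A^{1/2}w\rangle_H=-\langle u(t),w\rangle_U$, so $\|u'(t)\|_{U'}\le\|u(t)\|_U$, and therefore $u'\in L^2(0,T;U')$ by the estimate above. Thus $u\in H^1(0,T;U')\cap L^2(0,T;U)=W$ with $u(0)=x$.

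The only genuinely delicate point is the integrability of $t\mapsto\|u'(t)\|_{U'}$ up to $t=0$: the naive candidate, the constant function equal to $x$, fails because $x$ need not lie in $U$, so some regularizing evolution is unavoidable, and it is exactly the quantitative smoothing of $e^{-tA}$ encoded in the spectral integral above that turns an arbitrary $x\in H$ into a $W$-function. All remaining verifications are routine bookkeeping with the Gelfand triple $U\hookrightarrow H\hookrightarrow U'$ together with the elementary identity $\int_0^\infty\lambda e^{-2t\lambda}\,dt=\tfrac12$.
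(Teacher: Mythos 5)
Your proof is correct, but it follows a genuinely different route from the paper. The paper simply identifies $H$ with the real interpolation space between $U'$ and $U$ of exponent $1/2$ and invokes the abstract result that this interpolation space coincides with the trace space of $W=H^1(0,T;U')\cap L^2(0,T;U)$ (citing Lunardi), so existence of a lifting is immediate from that machinery. You instead build the lifting explicitly: taking the positive self-adjoint operator $A$ associated with the form $\langle\cdot,\cdot\rangle_U$ (so that, by the second representation theorem, $\dom(A^{1/2})=U$ and $\|A^{1/2}v\|_H=\|v\|_U$), you set $u(t)=e^{-tA}x$ and verify $u\in W$ via the spectral square-function identity $\int_0^\infty\|A^{1/2}e^{-tA}x\|_H^2\,dt=\tfrac12\|x\|_H^2$ together with $\|u'(t)\|_{U'}\le\|u(t)\|_U$. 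This is self-contained modulo standard form/semigroup theory, and it yields an explicit quantitative bound $\|u\|_W\le C\|x\|_H$, whereas the paper's interpolation argument is shorter and in fact characterizes the trace space exactly (both inclusions), which is more than the lemma needs. One small point worth making explicit in your write-up: having $u\in C([0,T],H)$, $u$ differentiable on $(0,T)$ with $u'(t)=-Au(t)$ and $u'\in L^2(0,T;U')\subset L^1(0,T;U')$, you should note that $u(t)-u(s)=\int_s^t u'(\tau)\,d\tau$ in $U'$ for $0<s<t\le T$ and let $s\to0$ using continuity at $0$; this shows the distributional derivative on all of $(0,T)$ is the pointwise one, so $u\in H^1(0,T;U')$ and the trace at $0$ in the sense of Lemma~\ref{lem:5.1} is indeed $x$. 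That step is routine, as you indicate, but it is the precise place where ``$C^1$ on $(0,T)$'' is upgraded to membership in $W$.
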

\begin{proof}  
One has $H=[ V',V]_{1/2}$, the complex interpolation space, which coincides with the real interpolation space $[V',V]_{1/2,1/2}$. The latter coincides with the trace space by \cite[Proposition 1.2.10]{Lunardi}.   	
\end{proof}
Now we obtain from Theorem~\ref{th:4.12} 
the following result on existence and uniqueness. 
\begin{thm}\label{th:5.3}
Let ${\Phi} \in {\mathcal L}(H)$ be a contraction and let $y_0\in  H$. Let $f\in L^2(0,T;{U}') $. Then there exists a unique $u\in H^1(0,T;  {U}')\cap L^2(0,T;{U})$ such that 
\begin{equation}\label{eq:5.1}
u'+{\mathcal A}u=f
\end{equation} 
and 
\begin{equation}\label{eq:5.2}
u(0)-{\Phi}^* u(T)=y_0.
\end{equation}
\end{thm}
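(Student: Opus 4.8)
The plan is to realize Theorem~\ref{th:5.3} as a direct instance of the abstract result in Theorem~\ref{th:4.12}, applied to the concrete derivation setting already set up in this section. Recall that we have taken $V = L^2(0,T;U)$, $\r = \mathcal{C}_c^\infty(0,T;U)$, $\mathcal{D}r = r'$, and the computations preceding the statement identify the extended domain as $W = H^1(0,T;U') \cap L^2(0,T;U)$, with the associated boundary form $b(v,w) = \langle v(T),w(T)\rangle_H - \langle v(0),w(0)\rangle_H$, and with $B_0 w = w(0)$, $B_1 w = w(T)$ both in $\mathcal{L}(W,H)$ satisfying $W = \ker B_0 + \ker B_1$ (via the decomposition $u(t) = \frac{t}{T}u(t) + \frac{T-t}{T}u(t)$). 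So Assumption~\ref{assu:4.1} holds with this choice of $H$, $B_0$, $B_1$.

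The key point I would make next is that $\ran B_0 = \ran B_1 = H$. This is exactly the content of Lemma~\ref{lem:5.2n}: for every $x \in H$ there is $u \in W$ with $u(0) = x$, so $B_0$ is surjective; applying the same lemma with time reversed $t \mapsto T - t$ (which maps $W$ onto itself) gives surjectivity of $B_1$. Consequently the contraction $\Phi \in \mathcal{L}(H)$ in the statement of Theorem~\ref{th:5.3} is exactly a linear contraction $\Phi : \ran B_0 \to \ran B_1$ in the sense required by Theorem~\ref{th:4.12}, and $y_0 \in H = \ran B_0$ is admissible data. Since $\mathcal{A} \in \mathcal{L}(V,V')$ is assumed coercive and $f \in V' = L^2(0,T;U')$, all hypotheses of Theorem~\ref{th:4.12} are met.

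Applying Theorem~\ref{th:4.12} then yields a unique $u \in W = H^1(0,T;U') \cap L^2(0,T;U)$ satisfying
\[
\mathcal{D}u + \mathcal{A}u = f \quad \text{and} \quad B_0 u - \Phi^* B_1 u = y_0,
\]
which, unravelling the notation $\mathcal{D}u = u'$, $B_0 u = u(0)$, $B_1 u = u(T)$, is precisely \eqref{eq:5.1} and \eqref{eq:5.2}. Finally I would note that the equation $u' + \mathcal{A}u = f$ holds in $V' = L^2(0,T;U')$, and since $u' = f - \mathcal{A}u \in L^2(0,T;U')$ automatically, this is consistent with $u \in H^1(0,T;U')$; the identity \eqref{eq:5.2} makes sense because $W \subset \mathcal{C}([0,T],H)$ by Lemma~\ref{lem:5.1}, so the pointwise traces $u(0), u(T) \in H$ are well defined.

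The proof is therefore essentially a verification of hypotheses; the only genuine ingredient beyond bookkeeping is the surjectivity of the trace maps, i.e.\ Lemma~\ref{lem:5.2n} together with its time-reversed version, and this is already established. I do not anticipate a serious obstacle: the one subtlety to state carefully is the passage between "$\Phi$ defined on all of $H$" in Theorem~\ref{th:5.3} and "$\Phi$ defined on $\ran B_0$" in Theorem~\ref{th:4.12}, which is harmless precisely because $\ran B_0 = \ran B_1 = H$, and correspondingly $\Phi^*$ in the abstract theorem coincides with the Hilbert-space adjoint of $\Phi$ on $H$ appearing in \eqref{eq:5.2}.
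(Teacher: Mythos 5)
Your proposal is correct and follows essentially the same route as the paper: the paper also obtains Theorem~\ref{th:5.3} by specializing Theorem~\ref{th:4.12} to $B_0u=u(0)$, $B_1u=u(T)$, using the decomposition $u(t)=\frac{t}{T}u(t)+\frac{T-t}{T}u(t)$ for \eqref{eq:4.2} and Lemma~\ref{lem:5.2n} to identify the trace space, so that $\ran B_0=\ran B_1=H$ and $\Phi\in\mathcal{L}(H)$ fits the abstract hypotheses. Your explicit remarks on the time-reversed version of Lemma~\ref{lem:5.2n} and on identifying $\Phi^*$ with the Hilbert-space adjoint on $H$ are exactly the (implicit) bookkeeping the paper relies on.
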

For $\Phi=0$, Theorem~\ref{th:5.3} establishes well-posedness for an initial-value problem. It is due to J. L. Lions \cite{Lio61}, see also \cite[III Proposition 2.3 p.112]{Sho97} or \cite[XVIII.3 Th\'eor\`eme 2]{DL}. 

If $\Phi=\Id$ and $y_0=0$ we obtain a periodic problem. If $\mathcal A$ is selfadjoint then Theorem~\ref{th:5.3} is given in \cite[II. Proposition 2.4]{Sho97} where selfadjointness is used in the proof on page 112. The general case given here is new and depends on our perturbation result Proposition~\ref{prop:3.7new}. \\

\noindent \textbf{Acknowledgments:}  This research is partly supported by the B\'ezout Labex, funded by ANR, reference ANR-10-LABX-58.

\end{document}